\documentclass[a4paper,12pt]{article}
\usepackage[top=2.5cm,bottom=2.5cm,left=2.5cm,right=2.5cm]{geometry}
\usepackage[margin=1cm,%
font=small,%
format=hang,%
labelsep=period,%
labelfont=bf]{caption}
\usepackage[utf8]{inputenc}
\usepackage{amsmath}
\usepackage{amsfonts}
\usepackage{amssymb}
\usepackage{cite}
\usepackage{amsthm}
\usepackage{makeidx}
\usepackage{graphicx}
\usepackage{mathrsfs,xcolor}
\usepackage{blkarray}
\usepackage{bm}
\usepackage{setspace}
\usepackage[left,pagewise,displaymath, mathlines]{lineno}
\usepackage{enumitem}
\usepackage{multirow}
\usepackage{hhline}

\makeatletter
\newcommand\xleftrightarrow[2][]{\ext@arrow 0099{\longleftrightarrowfill@}{#1}{#2}}
\def\longleftrightarrowfill@{\arrowfill@\leftarrow\relbar\rightarrow}
\makeatother

\numberwithin{equation}{section}

\theoremstyle{plain}
\newtheorem{theorem}{Theorem}[section]
\newtheorem{lemma}{Lemma}[section]
\newtheorem{proposition}{Proposition}[section]
\newtheorem{corollary}{Corollary}[section]

\theoremstyle{definition}
\newtheorem{definition}{Definition}[section]

\newtheorem{remark}{Remark}[section]

\usepackage{authblk}

\author[1,*]{ \textbf{Noel T. Fortun}}\author[1,2,3]{\textbf{Eduardo R. Mendoza}}

\affil[1]{\footnotesize \textit{Department of Mathematics and Statistics, De La Salle University, Manila  0922, Philippines}}
\affil[2]{\textit{Center for Natural Sciences and Environmental Research, De La Salle University, Manila  0922, Philippines}}
\affil[3]{\textit{Max Planck Institute of Biochemistry, Martinsried near Munich, Germany}}
\affil[*]{Corresponding author: \texttt{noel.fortun@dlsu.edu.ph}}

\title{\textbf{Comparative analysis of carbon cycle models via kinetic representations}}

\date{}

\begin{document}
\maketitle
\thispagestyle{empty}
\begin{abstract}
The pre-industrial state of the global carbon cycle is a significant aspect of studies related to climate change. In this paper, we recall the power law kinetic representations of the pre-industrial models of Schmitz (2002) and Anderies et al. (2013) from our earlier work. The power law kinetic representations, as uniform formalism, allow for a more extensive analysis and comparison of the different models for the same system. Using the mathematical theories of chemical reaction networks (with power-law kinetics), this work extends the analysis of the kinetic representations of the two models and assesses the similarities and differences in their structural and dynamic properties in relation to model construction assumptions. The analysis includes but is not limited to the coincidence of kinetic and stoichiometric spaces of the networks, capacity for equilibria multiplicity and co-multiplicity, and absolute concentration robustness in some species. We bring together previously published results about the power law kinetic representations of the two models and consolidate them with new observations here. We also illustrate how the pre-industrial model of Anderies et al. may serve as a building block in the analysis of a kinetic representation of a global carbon cycle with carbon dioxide removal intervention.
\end{abstract}

\singlespacing

\section{Introduction}\label{sec1}

The pre-industrial state of the global carbon cycle is an important reference point for studies on climate change. Mathematical descriptions of this process were derived by Fortun et al.  for the model of R. Schmitz \cite{SCHM2002} and the model of Anderies et al. \cite{AND2013} in the form of power law kinetic representations in \cite{FLRM2019} and \cite{DOA2018} respectively. Power law kinetic representations are chemical reaction networks (CRN) with power law kinetics whose ODE system solutions very closely approximate those of the models. They are called kinetic realizations if the ODE systems coincide, signifying the dynamical equivalence of the systems. The use of a uniform formalism (such as power law kinetics) enables deeper analysis and comparison of different models for the same system. The goal of this paper is twofold: 
\begin{enumerate}[label=(\roman*)]
    \item extend the analysis of the kinetic representations of the Schmitz and Anderies models; and
    \item assess the coincidence and difference in their structural and dynamic properties in relation to model construction assumptions.
\end{enumerate}
For notational brevity, we will refer to Schmitz and Anderies kinetic representations as \textit{Schmitz} and \textit{Anderies systems} respectively.

In addition to the proof of the existence of positive equilibria for any Schmitz system in \cite{FLRM2019}, previous results include the construction by Nazareno et al. \cite{NEML2019} of a linear conjugate with interesting properties (s. Section \ref{sec4.4}), the analysis by Lao et al. \cite{LLMM2022} of absolute concentration robustness (ACR) in a special subsystem (s. Section \ref{sec4.2}) and the derivation of the log-parametrization property for the same subsystem by Hernandez and Mendoza in \cite{HEME2022b}. In \cite{DOA2018}, an Anderies system was shown to have the capacity for multistationarity, i.e., the occurrence of distinct positive equilibria in a stoichiometric class. The same authors constructed in \cite{FLRM2021} another Anderies system which displayed absolute concentration robustness in two of its three species.

The new results of this paper for both Schmitz and Anderies systems are:
\begin{itemize}
    \item coincidence of the kinetic and stoichiometric subspaces (which implies e.g., the invariance of interesting network properties under linear conjugacy);
    \item (exponential) stability of the positive equilibria; and
    \item availability of a weakly reversible ``low deficiency complement" (LDC), i.e., a linear conjugate system whose deficiency is $1-\delta$, where $\delta$ is the network's deficiency, and whose kinetics is PL-RDK.
\end{itemize}
Furthermore, the paper derives the following novel class-specific properties:
\begin{itemize}
    \item Schmitz systems are Birch systems, i.e., there is a unique positive equilibrium in each stoichiometric class and it is complex balanced (s. Section \ref{sec4.1});
    \item Anderies systems are PLP systems, i.e., the set of positive equilibria has the form $\{ x\in \mathbb{R}^\mathscr{S}_{>0} \mid \log x - \log x^* \in (P_E)^\perp \}$, where $P_E$ is a subspace of $\mathbb{R}^\mathscr{S}$ and $x^*$ is a positive equilibrium, which leads to the identification of three distinct classes $\textsf{AND}_<$, $\textsf{AND}_0$, and $\textsf{AND}_>$ (s. Section \ref{sec5.3});
    \item absence of species with ACR in Schmitz, $\textsf{AND}_<$ and $\textsf{AND}_>$ systems vs. occurrence of two ACR species in $\textsf{AND}_0$; and
    \item mono- and co-monostationarity in Schmitz and $\textsf{AND}_0$ systems vs. multi- and co-multistationarity in $\textsf{AND}_>$ systems. 
\end{itemize}
Several of the results above derive from more general propositions about conservative, closed kinetic systems of maximal rank, i.e., systems whose stoichiometric subspaces are hyperplanes, of which both Schmitz and Anderies systems are examples.

Moreover, a kinetic representation of an aggregated Schmitz model, i.e., the set of species is reduced to coincide with those of the Anderies model, is shown to have the same structural and kinetic properties (with one exception) as a Schmitz system. Comparison of an aggregated Schmitz system with the (dynamically equivalent) LDC of an Anderies system reveals differences in only three structural and three kinetic properties. These may be viewed as the essential properties resulting from the different hypotheses underlying the Schmitz and Anderies et al. models.

Finally, this paper provides an analysis of a  kinetic representation of a model of carbon dioxide removal (CDR) from Heck et al. \cite{HECK2016}. The motivation for this analysis comes from the observation that two of its subsystems are (structurally equivalent to) pre-industrial Anderies systems.

The paper is organized as follows: Section \ref{sec2} collects fundamental concepts and results on reaction networks and kinetic systems needed in subsequent sections. A review of the Schmitz and Anderies et al. models is provided in Section \ref{sec3}. Sections \ref{sec4} and \ref{sec5} first derive the new results for Schmitz and Anderies systems respectively and then combine them with previous results (in tables) to present an overview of similarities and differences. In Section \ref{sec6}, aggregated Schmitz systems are constructed and then compared with the Anderies systems. Section \ref{sec7} covers a reaction network-based analysis of a CDR model using the Anderies pre-industrial model as a basis. A summary and outline of perspectives for further research are presented in Section \ref{sec8}.

\section{Preliminaries}\label{sec2}
In this Section, we assemble important notions and necessary results on chemical reaction networks and chemical kinetic systems to establish a foundation for the succeeding sections. In general, this paper uses the standard nomenclature in chemical reaction network theory (CRNT) \cite{FEIN1979,FEIN2019,TOTH2018}. For a list of frequently used symbols and abbreviations, the reader may refer to Appendix \ref{append:nomenclature}. 

\subsubsection*{Notation}
We denote the real numbers by $\mathbb{R}$, the non-negative real numbers by $\mathbb{R}_{\geq0}$ and the positive real numbers by $\mathbb{R}_{>0}$.  Objects in reaction systems are viewed as members of vector spaces. Suppose $\mathscr{I}$ is a finite index set. By $\mathbb{R}^\mathscr{I}$, we mean the usual vector space of real-valued functions with domain $\mathscr{I}$.  If $x \in \mathbb{R}_{>0}^\mathscr{I}$ and $y \in \mathbb{R}^\mathscr{I}$, we define $x^y \in \mathbb{R}_{>0}$ by
$
x^y= \prod_{i \in \mathscr{I}} x_i^{y_i} .
$
Let $x \wedge y$ be the component-wise minimum, $(x \wedge y)_i = \min (x_i, y_i)$.
The vector $\log x\in \mathbb{R}^\mathscr{I}$,where $x \in \mathbb{R}_{>0}^\mathscr{I}$, is given by 
$(\log x)_i = \log x_i,  \text{ for all } i \in \mathscr{I}.$  If $x,y \in  \mathbb{R}^\mathscr{I}$, the standard scalar product $ \langle x, y \rangle \in  \mathbb{R}$ is defined by $ \langle x, y \rangle = \sum_{i \in \mathscr{I}} x_i y_i.$ 
The \textit{support} of $x \in \mathbb{R}^\mathscr{I}$, denoted by $\text{supp } x$, is given by
$ \text{supp } x := \{ i \in \mathscr{I} \mid x_i \neq 0 \}.$

\subsection{Fundamentals of chemical reaction networks}

We begin with the formal definition of a chemical reaction network or CRN. 

\begin{definition}
A \textbf{chemical reaction network} or CRN is a triple $\mathscr{N}:= (\mathscr{S,C,R})$ of nonempty finite sets $\mathscr{S}$, $\mathscr{C}$, and $\mathscr{R}$, of $m$ \textbf{species}, $n$ \textbf{complexes}, and $r$ \textbf{reactions}, respectively, where $\mathscr{C} \subseteq \mathbb{R}_{\geq 0}^\mathscr{S}$ and $\mathscr{R} \subset \mathscr{C} \times \mathscr{C}$ satisfying the following properties:
\begin{enumerate}[label=(\roman*)]
    \item $(y,y) \notin \mathscr{R}$ for any $y \in \mathscr{C}$;
    \item for each $y \in \mathscr{C}$, there exists $y' \in \mathscr{C}$ such that $(y,y')\in \mathscr{R}$ or $(y',y)\in \mathscr{R}$.
\end{enumerate}
\end{definition}
\noindent For $y \in \mathscr{C}$, the vector $$y=\displaystyle{\sum_{S \in \mathscr{S}}} y_S S,$$ where $y_S$ is the \textbf{stoichiometric coefficient} of the species $S$. In lieu of $(y,y')\in \mathscr{R}$, we write the more suggestive notation  $y \rightarrow y'$. In this reaction, the vector $y$ is called the \textbf{reactant complex} and $y'$ is called the \textbf{product complex}. 

CRNs can be viewed as directed graphs where the complexes are vertices and the reactions are arcs. The (strongly) connected components are precisely the \textbf{(strong) linkage classes} of the CRN. A strong linkage class is a \textbf{terminal strong linkage class} if there is no reaction from a complex in the strong linkage class to a complex outside the given strong linkage class. 

\begin{definition}
A CRN with $n$ complexes, $n_r$ reactant complexes, $\ell$ linkage classes, $s\ell$ strong linkage classes, and $t$ terminal strong linkage classes is 
\begin{enumerate}[label=(\roman*)]
    \item \textbf{weakly reversible} if $s\ell = \ell$;
    \item $\bm{t}$\textbf{-minimal} if $t=\ell$;
    \item \textbf{point terminal} if $t=n-n_r$; and
    \item \textbf{cycle terminal} if $n-n_r=0$.
\end{enumerate}
\end{definition}

For every reaction, we associate a \textbf{reaction vector}, which is obtained by subtracting the reactant complex from the product complex. From a dynamic perspective, every reaction $ y \rightarrow y' \in \mathscr{R}$ leads to a change in species concentrations proportional to the  reaction vector $ \left( y' – y \right) \in \mathbb{R}^\mathscr{S}$. The overall change induced by all the reactions lies in a subspace of $\mathbb{R}^\mathscr{S}$ such that any trajectory in $\mathbb{R}^\mathscr{S}_{>0}$ lies in a coset of this subspace. 

\begin{definition}
The \textbf{stoichiometric subspace} of a network $\mathscr{N}$ is given by
$$ \mathcal{S} := \text{span } \{ y' – y \in \mathbb{R}^\mathscr{S} \mid y \rightarrow y' \in \mathscr{R} \}.$$
The \textbf{rank} of the network is defined as $s:= \dim \mathcal{S}$. For $x \in \mathbb{R}^\mathscr{S}_{>0}$, its \textbf{stoichiometric compatibility class} is defined as $(x+\mathcal{S}) \cap \mathbb{R}^\mathscr{S}_{ \geq 0}$. Two vectors $x^{*}, x^{**} \in  \mathbb{R}^\mathscr{S}$ are \textbf{stoichiometrically compatible} if $ x^{**}-x^{*} \in \mathcal{S}$.
\end{definition}

An important structural index of a CRN, called \textit{deficiency}, provides one way to classify networks.

\begin{definition}
The \textbf{deficiency} $\delta$ of a CRN with $n$ complexes, $\ell$ linkage classes, and rank $s$ is defined as $\delta:=n-\ell-s$.
\end{definition}

\subsection{Fundamentals of chemical kinetic systems}
It is generally assumed that the rate of a reaction $y \rightarrow y' \in \mathscr{R}$ depends on the concentrations of the species in the reaction. The exact form of the non-negative real-valued rate function $K_{ y \rightarrow y'}$ depends on the underlying \textit{kinetics}. 

\subsubsection{General Kinetics}
The following definition of kinetics is expressed in a more general context than what one typically finds in CRNT literature.
\begin{definition}
A \textbf{kinetics} for a network $\mathscr{N}=(\mathscr{S,C,R})$ is an assignment to each reaction $y \rightarrow y' \in \mathscr{R}$ a rate function $ K_{ y \rightarrow y'}: \Omega_K \rightarrow \mathbb{R}_{\geq 0}$, where $\Omega_K$ is a set such that $\mathbb{R}^\mathscr{S}_{> 0} \subseteq \Omega_K \subseteq \mathbb{R}^\mathscr{S}_{\geq 0}$, $x  \wedge  x^{*} \in \Omega_K$ whenever $x, x^{*} \in \Omega_K$, and $ K_{ y \rightarrow y'} (x) \geq 0$ for all $x \in \Omega_K$. A kinetics for a network $\mathscr{N}$ is denoted by $K:\Omega_K \rightarrow \mathbb{R}^\mathscr{R}_{\geq 0}$ (\cite{WIUF2013}). A \textbf{chemical kinetics} is a kinetics $K$ satisfying the condition that for each $y \rightarrow y' \in \mathscr{R}$, $ K_{ y \rightarrow y'} (x) >0$ if and only if $\text{supp } y \subset \text{supp } x$. The pair $(\mathscr{N},K)$ is called a \textbf{chemical kinetic system} (\cite{AJLM2017}). 
\end{definition}

The system of ordinary differential equations that govern the dynamics of a CRN is defined as follows.

\begin{definition}\label{def:ODE}
The \textbf{ordinary differential equation (ODE)} associated with a chemical kinetic system $(\mathscr{N},K)$ is defined as 
$ \dfrac{dx}{dt}=f(x)$ with \textbf{species formation rate function} 
\begin{equation}\label{eq:sfrf}
    f(x)= \sum_{ y \rightarrow y' \in \mathscr{R}} K_{ y \rightarrow y'} (x) (y'-y).
\end{equation}
A \textbf{positive equilibrium} or \textbf{steady state} $x$ is an element of $\mathbb{R}^\mathscr{S}_{>0}$ for which $f(x) = 0$.
\end{definition}

The reaction vectors of a CRN $ (\mathscr{S,C,R})$ are \textbf{positively dependent} if for each reaction $y \rightarrow y' \in \mathscr{R}$, there exists a positive number $k_{ y \rightarrow y'}$ such that $\sum_{y \rightarrow y' \in \mathscr{R}}k_{ y \rightarrow y'} (y'-y)=0$. In view of Definition \ref{def:ODE}, a necessary condition for a chemical kinetic system to admit a positive steady state is that its reaction vectors are positively dependent.

\begin{definition}
The \textbf{set of positive equilibria} or \textbf{steady states} of a chemical kinetic system $(\mathscr{N},K)$ is given by 
$$ E_+ (\mathscr{N},K) = \{ x \in \mathbb{R}^\mathscr{S}_{>0} \mid f(x) = 0 \}. $$
For brevity, we also denote this set by $E_+$. The chemical kinetic system is said to be \textbf{multistationary} (or has the capacity to admit \textbf{multiple steady states}) if there exist positive rate constants such that $\mid E_+ \cap \mathcal{P}\mid \geq 2$ for some positive stoichiometric compatibility class $\mathcal{P}$. On the other hand, it is \textbf{monostationary} if $\mid E_+ \cap \mathcal{P}\mid \leq 1$ for all positive stoichiometric compatibility class $\mathcal{P}$.
\end{definition}

To reformulate the species formation rate function in Eq. (\ref{eq:sfrf}), we consider the natural basis vectors $\omega_i \in \mathbb{R}^\mathscr{I}$ where $i \in \mathscr{I}=\mathscr{C}$ or $\mathscr{R}$ and define 
\begin{enumerate}[label=(\roman*)]
    \item the \textbf{molecularity map} $Y: \mathbb{R}^\mathscr{C} \rightarrow \mathbb{R}^\mathscr{S}$ with $Y(\omega_y)=y$;
    \item the \textbf{incidence map} $I_a: \mathbb{R}^\mathscr{R} \rightarrow \mathbb{R}^\mathscr{S}$ with $I_a (\omega_{y \rightarrow y'})= \omega_{y'} - \omega_y$; and
    \item the \textbf{stoichiometric map} $N: \mathbb{R}^\mathscr{R} \rightarrow \mathbb{R}^\mathscr{S}$ with $N= YI_a$.
\end{enumerate}
Hence, Eq. (\ref{eq:sfrf}) can be rewritten as $f(x)=YI_aK(x)=NK(x).$ The positive steady states of a chemical kinetic system that satisfies $I_a K(x)=0$ are called \textit{complex balancing equlibria}. 
\begin{definition}
The \textbf{set of complex balanced equilibria} of a chemical kinetic system $(\mathscr{N},K)$ is the set
$$ Z_+(\mathscr{N},K) = \{ x\in \mathbb{R}^\mathscr{S}_{>0} \mid I_a K(x) =0 \} \subseteq E_+(\mathscr{N},K).$$
A chemical kinetic system is said to be \textbf{complex balanced} if it has a complex balanced equilibrium. A complex balanced kinetic system is \textbf{absolutely complex balanced (ACB)} if every positive equilibrium is complex balanced.
\end{definition}

\begin{remark}\label{rem:CB}
If a chemical kinetic system has zero deficiency, then its steady states are all complex balanced (Corollary 4.8, \cite{FEIN1979}). Moreover, the existence of a complex balanced equilibrium implies that the underlying CRN is weakly reversible (Theorem 2B, \cite{HORN1972}). 
\end{remark}

\subsubsection{Power law kinetic systems} 
Power law kinetics generalize mass action kinetics. For systems where molecular overcrowding is observed, the kinetic orders for the reactions can exhibit non-integer values \cite{BAJZER2008,SAVA1998,Schnell2004} found in power-law formalism \cite{SAVA1969,SAVA1998,VOIT2000,VOIT2006,VOIT2013}. 

We define power law kinetics through the $r \times m$ \textbf{kinetic order matrix} $F=[F_{ij}]$, where $F_{ij} \in \mathbb{R}$ encodes the kinetic order the $j$th species of the reactant complex in the $i$th reaction. Further, consider the \textbf{rate vector} $k \in \mathbb{R}^\mathscr{R}_{>0}$, where $k_i \in \mathbb{R}_{>0}$ is the rate constant in the $i$th reaction. 

\begin{definition}\label{def:PLK}
A kinetics $K: \mathbb{R}^\mathscr{S}_{>0} \rightarrow \mathbb{R}^\mathscr{R}$ is a \textbf{power law kinetics} or \textbf{PLK} if
$$\displaystyle K_{i}(x)=k_i x^{F_{i,*}} \quad \text{for all } i \in \mathscr{R},$$
where $F_{i,*}$ is the row vector containing the kinetic orders of the species of the reactant complex in the $i$th reaction.
\end{definition}
 
Power law kinetic systems can be classified based on kinetic orders assigned to its \textbf{branching reactions}, i.e., reactions sharing a common reactant complex. 

\begin{definition}
A PLK system has \textbf{reactant-determined kinetics} (or of type \textbf{PL-RDK}) if for any two reactions $i$, $j \in \mathscr{R}$ with identical reactant complexes, the corresponding rows of kinetic orders in $F$ are identical, i.e. $F_{ih}=F_{jh}$ for all $h  \in \mathscr{S}$.  Otherwise, a PLK system has \textbf{non-reactant-determined kinetics} (or of type \textbf{PL-NDK}).
\end{definition}

\begin{remark}
In a mass action system where the reactions occur in a homogeneous space, the kinetic order is the same as the number of molecules entering into the reaction. Hence, in view of Definition \ref{def:PLK}, a kinetics is a \textbf{mass action kinetics} if the entries of the row vector $F_{i,*}$ are  the stoichiometric coefficients of a reactant complex in the $i$th reaction. Moreover, mass action kinetics is of type PL-RDK. 
\end{remark}

Arceo et al. \cite{ARCEO2015} identified two large sets of kinetic systems, namely the \textbf{complex factorizable (CF)} kinetics and its complement, the \textbf{non-complex factorizable (NF)} kinetics. Complex factorizable kinetics generalize the key structural property of mass action kinetics that the species formation rate function decomposes as $f(x) = Y \circ A_k \circ \Psi_k,$
where $Y$ is the map of complexes, the Laplacian map $A_k : \mathbb{R}^\mathscr{C} \rightarrow \mathbb{R}^\mathscr{C}$ defined by
$A_k x:= \sum_{y \rightarrow y'\in\mathscr{R}}k_{y \rightarrow y'}x_y (\omega_{y'} -\omega_y)$, and $\Psi_k: \mathbb{R}^\mathscr{S}_{\geq 0} \rightarrow  \mathbb{R}^\mathscr{C}_{\geq 0}$ such that $I_a \circ K(x) = A_k \circ \Psi_k(x)$ for all $x \in \mathbb{R}^\mathscr{S}_{\geq 0}$. 

\begin{remark}
In the set of power law kinetics, the complex-factorizable kinetic systems are precisely the PL-RDK systems.
\end{remark}

In some sections of this paper, kinetic orders are encoded using $T$-matrix and augmented $T$-matrix, which were introduced by Talabis et al. \cite{TALABIS2017}. These matrices are derived from the $m \times n$ matrix $\widetilde{Y}$ defined by M\"{u}ller and Regensburger in \cite{MURE2012}. In this matrix,  $( \widetilde{Y})_{ij} = F_{ki}$ if $j$ is a reactant complex of reaction $k$ and $( \widetilde{Y})_{ij} = 0$, otherwise. 

\begin{definition}
The $m \times n_r$ $\bm{T}$\textbf{-matrix} is the truncated $\widetilde{Y}$ where the non-reactant colums are deleted and $n_r$ is the number of reactant complexes. Define the $n_r \times \ell$ matrix $L=\left[e^1, e^2,\dots, e^\ell \right]$ where $e^i$ is the characteristic vector of the set of complexes $\mathscr{C}_i$ in the linkage class $\mathscr{L}_i$. That is, for all $j \in \mathscr{C}$ and $i=1,\dots, \ell$, $e^i_j=1$ if $j \in \mathscr{C}_i$ and  $e^i_j=0$ if $j \in \mathscr{C}\backslash \mathscr{C}_i$. The \textbf{augmented $\bm{T}$-matrix} is the $(m +\ell) \times n_r$ block matrix defined as $\widehat{T}=\left[ \begin{array}[center]{c} T \\  L^{\top} \\ \end{array} \right].$
\end{definition}

\begin{remark}
In \cite{TALABIS2017}, Talabis et al. defined a subclass of PL-RDK systems whose augmented $T$-matrix has maximal column rank. They called such system as \textbf{$\bm{\widehat{T}}$-rank maximal} (or of type \textbf{PL-TIK}).
\end{remark}

\subsection{Decomposition theory}
Decomposition theory was initiated by M. Feinberg in his 1987 review paper \cite{FEIN1987}. He introduced the general concept of a network decomposition of a CRN as a union of subnetworks whose reaction sets form a partition of the network’s set of reactions. He also introduced the so-called \textit{independent decomposition} of chemical reaction networks.

\begin{definition}
A decomposition of a CRN $\mathscr{N}$ into $k$ subnetworks of the form $\mathscr{N}=\mathscr{N}_1 \cup \cdots \cup \mathscr{N}_k$ is \textbf{independent} if its stoichiometric subspace is equal to the direct sum of the stoichiometric subspaces of its subnetworks, i.e., $\mathcal{S}=\mathcal{S}_1 \oplus \cdots \oplus \mathcal{S}_k$.
\end{definition}

For an independent decomposition, Feinberg concluded that any positive equilibrium of the “parent network” is also a positive equilibrium of each subnetwork.

\begin{theorem}[Rem. 5.4, \cite{FEIN1987}]  \label{feinberg theorem}
Let $(\mathscr{N},K)$ be a chemical kinetic system with partition $\{\mathscr{R}_1, \dots, \mathscr{R}_k \}$. If $\mathscr{N}=\mathscr{N}_1 \cup \cdots \cup\mathscr{N}_k$ is the network decomposition generated by the partition  and $E_+(\mathscr{N}_i,K_i)= \{ x \in \mathbb{R}^\mathscr{S}_{>0} \mid N_i K_i(x) = 0, i=1,\dots,k \}$, then 
$ \bigcap_{i=1}^kE_+ (\mathscr{N}_i, K_i)\subseteq E_+ (\mathscr{N}, K)$. If the network decomposition is independent, then equality holds.
\end{theorem}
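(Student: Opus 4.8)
The plan is to exploit the additivity of the species formation rate function across the partition, and then split the asserted equality into its two inclusions. Writing $f_i(x) := N_i K_i(x)$ for the species formation rate function of the subsystem $(\mathscr{N}_i, K_i)$ carrying the kinetics inherited by restriction of $K$ to $\mathscr{R}_i$, the first observation is that, since $\{\mathscr{R}_1, \dots, \mathscr{R}_k\}$ partitions $\mathscr{R}$, the sum defining $f$ in Eq.~(\ref{eq:sfrf}) splits as
\begin{align*}
f(x) &= \sum_{y \to y' \in \mathscr{R}} K_{y \to y'}(x)(y' - y) \\
&= \sum_{i=1}^k \sum_{y \to y' \in \mathscr{R}_i} K_{y \to y'}(x)(y' - y) = \sum_{i=1}^k f_i(x).
\end{align*}

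For the inclusion $\bigcap_{i=1}^k E_+(\mathscr{N}_i, K_i) \subseteq E_+(\mathscr{N}, K)$, I would take $x$ in the intersection, so that $f_i(x) = 0$ for every $i$; then $f(x) = \sum_{i=1}^k f_i(x) = 0$, whence $x \in E_+(\mathscr{N}, K)$. This direction uses only the partition property and holds whether or not the decomposition is independent.

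For equality under independence I would prove the reverse inclusion. The key structural fact is that $f_i(x) = N_i K_i(x)$ always lies in the image of $N_i$, which is precisely the stoichiometric subspace $\mathcal{S}_i$ of the $i$th subnetwork, since the reaction vectors $y' - y$ with $y \to y' \in \mathscr{R}_i$ span $\mathcal{S}_i$ by definition. Now take $x \in E_+(\mathscr{N}, K)$, so that $\sum_{i=1}^k f_i(x) = f(x) = 0$ with each summand $f_i(x) \in \mathcal{S}_i$. Independence means $\mathcal{S} = \mathcal{S}_1 \oplus \cdots \oplus \mathcal{S}_k$, so the representation of the zero vector as a sum of vectors drawn one from each $\mathcal{S}_i$ is the trivial one; hence $f_i(x) = 0$ for all $i$, i.e. $x \in \bigcap_{i=1}^k E_+(\mathscr{N}_i, K_i)$. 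Combined with the first inclusion, this yields equality.

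The only genuine obstacle is this last step, namely making precise and invoking the uniqueness-of-decomposition property of an internal direct sum: in $\mathcal{S} = \bigoplus_{i=1}^k \mathcal{S}_i$, the relation $\sum_{i=1}^k v_i = 0$ with $v_i \in \mathcal{S}_i$ forces every $v_i = 0$. Everything else is bookkeeping with the partition. I would therefore state this linear-algebra fact explicitly and apply it to $v_i = f_i(x)$. A minor point worth flagging is that the subsystems must carry the restricted kinetics $K_i = K|_{\mathscr{R}_i}$ for the additive splitting of $f$ to be valid; without independence the direct-sum argument fails and only the forward inclusion survives, which is exactly why the converse is asserted only in the independent case.
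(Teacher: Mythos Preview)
Your proof is correct and is the standard argument for this classical result. Note, however, that the paper does not supply its own proof of this theorem: it is stated as a cited result (Rem.~5.4 of \cite{FEIN1987}) in the preliminaries section and invoked as background, so there is no authors' proof to compare against. Your argument---splitting $f$ additively over the partition and then using uniqueness of representation in the direct sum $\mathcal{S}=\bigoplus_i \mathcal{S}_i$ to force each $f_i(x)=0$---is exactly the reasoning Feinberg's remark rests on.
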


Farinas et al. \cite{FAML2021} introduced the concept of incidence independent decomposition that is patterned after independent decomposition but considers the images of the incidence maps instead of the stoichiometric subspaces.

\begin{definition}
A decomposition of a CRN $\mathscr{N}$ into $k$ subnetworks of the form $\mathscr{N}=\mathscr{N}_1 \cup \cdots \cup \mathscr{N}_k$ is \textbf{incidence independent} if the image of the incidence map of $\mathscr{N}$  is equal to the direct sum of the images of the incidence maps of its subnetworks, i.e., $\text{Im }I_a=\text{Im }I_{a,1} \oplus \cdots \oplus \text{Im }I_{a,k}$.
\end{definition}

The following result shows the relationship between the set of incidence independent decompositions and the set of complex balanced equilibria of any kinetic system.  It is the precise analogue of Theorem \ref{feinberg theorem} for incidence independent decomposition.

\begin{theorem}[Theorem 4, \cite{FAML2020}]
\label{th:Z}
Let $(\mathscr{N},K)$ be a chemical kinetic system with decomposition  $\mathscr{N}=\mathscr{N}_1 \cup \cdots \cup\mathscr{N}_k$ and $Z_+(\mathscr{N}_i,K_i)= \{ x \in \mathbb{R}^\mathscr{S}_{>0} \mid I_{a,i} K_i(x) = 0, i=1,\dots,k \}$, then $ \bigcap_{i=1}^k Z_+ (\mathscr{N}_i, K_i)\subseteq Z_+ (\mathscr{N}, K)$. If the network decomposition is incidence independent, then equality holds and $ Z_+ (\mathscr{N}, K) \neq \emptyset \Rightarrow Z_+ (\mathscr{N}_i, K_i) \neq \emptyset$ for each $i=1,\dots,k$.
\end{theorem}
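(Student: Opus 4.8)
The plan is to prove Theorem \ref{th:Z} in two stages, mirroring the structure of the analogous stoichiometric result (Theorem \ref{feinberg theorem}) but working with the incidence maps $I_{a,i}$ in place of the stoichiometric maps $N_i$. First I would establish the general inclusion $\bigcap_{i=1}^k Z_+(\mathscr{N}_i,K_i)\subseteq Z_+(\mathscr{N},K)$, which holds for \emph{any} decomposition, not just an incidence independent one. The key observation is that since the reaction sets $\mathscr{R}_i$ partition $\mathscr{R}$, for any $x\in\mathbb{R}^\mathscr{S}_{>0}$ we may split the sum defining the incidence map: $I_a K(x)=\sum_{i=1}^k I_{a,i}K_i(x)$, where each $I_{a,i}K_i(x)$ is viewed as a vector in $\mathbb{R}^\mathscr{C}$ supported on the complexes of $\mathscr{N}_i$. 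Thus if $x$ lies in every $Z_+(\mathscr{N}_i,K_i)$, so that each summand vanishes, then $I_a K(x)=0$ and $x\in Z_+(\mathscr{N},K)$.

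The second stage is the reverse inclusion under the incidence independence hypothesis. Here I would take $x\in Z_+(\mathscr{N},K)$, so $I_a K(x)=0$, and again write $I_a K(x)=\sum_{i=1}^k I_{a,i}K_i(x)$ with $I_{a,i}K_i(x)\in\text{Im }I_{a,i}$. The defining property of an incidence independent decomposition is precisely that $\text{Im }I_a=\bigoplus_{i=1}^k\text{Im }I_{a,i}$ is a \emph{direct} sum. A sum of elements drawn one from each summand of a direct sum decomposition is zero if and only if each element is individually zero; hence $I_{a,i}K_i(x)=0$ for every $i$, giving $x\in\bigcap_{i=1}^k Z_+(\mathscr{N}_i,K_i)$. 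This establishes equality.

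For the final implication, that $Z_+(\mathscr{N},K)\neq\emptyset$ forces $Z_+(\mathscr{N}_i,K_i)\neq\emptyset$ for each $i$, I would simply note that equality of the two sets together with nonemptiness of the left side means any witness $x$ in $Z_+(\mathscr{N},K)$ also lies in the intersection $\bigcap_i Z_+(\mathscr{N}_i,K_i)$, and therefore in each individual $Z_+(\mathscr{N}_i,K_i)$; so each of these sets contains $x$ and is nonempty.

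The main subtlety to get right is the directness of the sum decomposition of $I_a K(x)$ over complex space: the general inclusion needs only that the images \emph{sum} to $\text{Im }I_a$, whereas the reverse inclusion genuinely uses that the sum is \emph{direct}, so that the vanishing of the total forces the vanishing of each component. I would therefore be careful to state the decomposition $I_a K(x)=\sum_i I_{a,i}K_i(x)$ as an identity in $\mathbb{R}^\mathscr{C}$ (extending each $I_{a,i}$ by zero outside the complexes of $\mathscr{N}_i$), and to invoke the direct-sum hypothesis at exactly the point where uniqueness of representation is required. No deeper machinery is needed; the argument is a direct translation of Feinberg's independent-decomposition reasoning from the stoichiometric setting to the incidence setting.
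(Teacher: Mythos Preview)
The paper does not contain a proof of this theorem; it is merely quoted from \cite{FAML2020} as background. Your argument is the standard one and is correct: the identity $I_aK(x)=\sum_{i=1}^k I_{a,i}K_i(x)$ in $\mathbb{R}^\mathscr{C}$ (each summand extended by zero) gives the inclusion for any decomposition, and directness of $\bigoplus_i \text{Im }I_{a,i}$ forces each summand to vanish when the total does, yielding the reverse inclusion and the nonemptiness implication.
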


\section{A review of the kinetic representations of two pre-industrial carbon cycle models}\label{sec3}

In this Section, we review the two models of pre-industrial carbon cycle whose power law kinetic representations form the basis of our comparative analysis. 

The first model is a pre-industrial reduction of the simple mass balance model of the Earth system of R. Schmitz \cite{SCHM2002}. On the other hand, the second model is based on the analysis of the Earth’s carbon cycle in the pre-industrial state done by Anderies et al. \cite{AND2013}. For both systems, the transfer rate functions (that are not power law functions) were approximated using a standard method in Biochemical Systems Theory \cite{VOIT2006,VOIT2013,VOIT2000} to derive ODE systems with purely power law terms. For each ODE system, a dynamically equivalent chemical kinetic system (of PLK type) is constructed using the procedure developed by Arceo et. al \cite{ARCEO2015}. For detailed computations, the reader may refer to  the work of Fortun et al. in \cite{FLRM2019} and \cite{DOA2018}.

In the pre-industrial state of the model of Schmitz, six state variables $M_1,\cdots, M_6$ representing the major carbon pools are considered. Figure \ref{schmitz1}(a) provides a schematic diagram of the model. Its dynamically equivalent PLK system is a PL-NDK system with 6 species, 6 monomolecular complexes (i.e., complexes with only one species with stoichiometric coefficient of 1), and 13 reactions. Figure \ref{schmitz1}(b) presents the underlying CRN of the system, \ref{schmitz1}(c) the kinetic orders of the rate functions of each reaction, and \ref{schmitz1}(d) the CRN's relevant network numbers.

\begin{figure}[t]%
\centering
\includegraphics[width=1\textwidth]{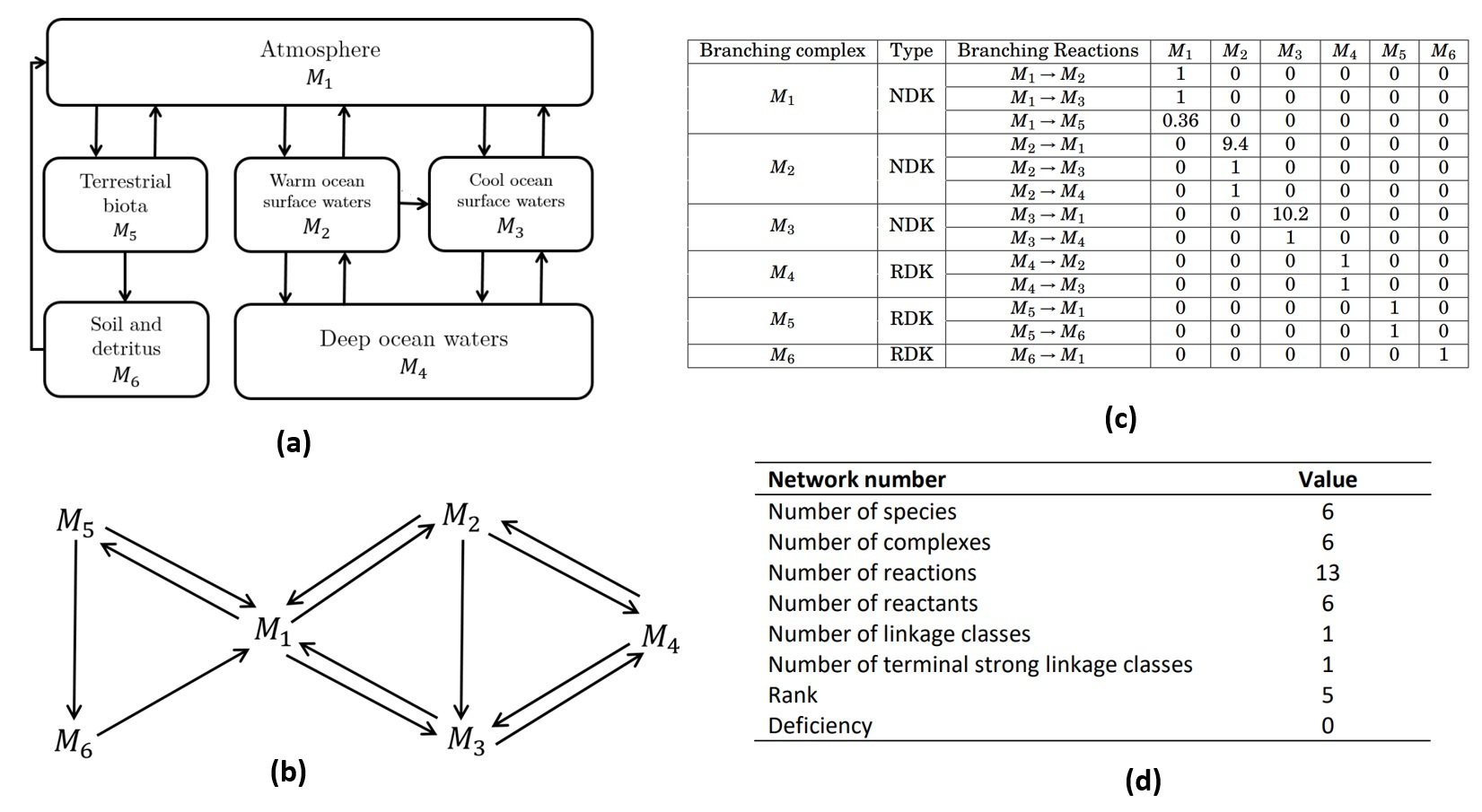}
\caption{The pre-industrial carbon cycle model based from Schmitz \cite{SCHM2002}: (a) its schematic diagram, (b) its CRN representation, (c) its kinetic order matrix, and (d) its CRN numbers.}\label{schmitz1}
\end{figure}

For the model of Anderies et al., the system involves only three major carbon pools. In Figure \ref{anderies1}(a), the boxes represent these pools. The CRN representation of its dynamically equivalent PLK system is in Figure \ref{anderies1}(b) with some network properties listed in Figure \ref{anderies1}(d). The power law dynamics of the system is encoded in the kinetic order matrix in Figure \ref{anderies1}(c). There were two kinetic representations computed in \cite{DOA2018,FLRM2021}. The difference in the computed kinetic order approximations is due to the variation of a single parameter in the original model. This parameter (denoted by $\alpha$ in the original paper) represents the human terrestrial carbon off-take rate, which accounts for that reduction of the carbon capture capacity of terrestrial systems (e.g., farming, forest clearing and burning). In \cite{DOA2018}, the approximation assumed that $\alpha =0.3$ (i.e., there are human activities that hinder carbon sequestration of land), which is a similar value used by Anderies et al. in their analysis. The power-law approximation led to the following kinetic orders: $p_1=-1.894, p_2=-0.271, q_1=0.426$, and $q_2=0.439$. On the other hand, the approximation done in \cite{FLRM2021} assumed the absence of such human activities or $\alpha = 0$. The resulting kinetic orders were $p_1=p_2=-68, q_1=0.580$, and $q_2= 0.911$.

\begin{figure}[t]%
\centering
\includegraphics[width=1\textwidth]{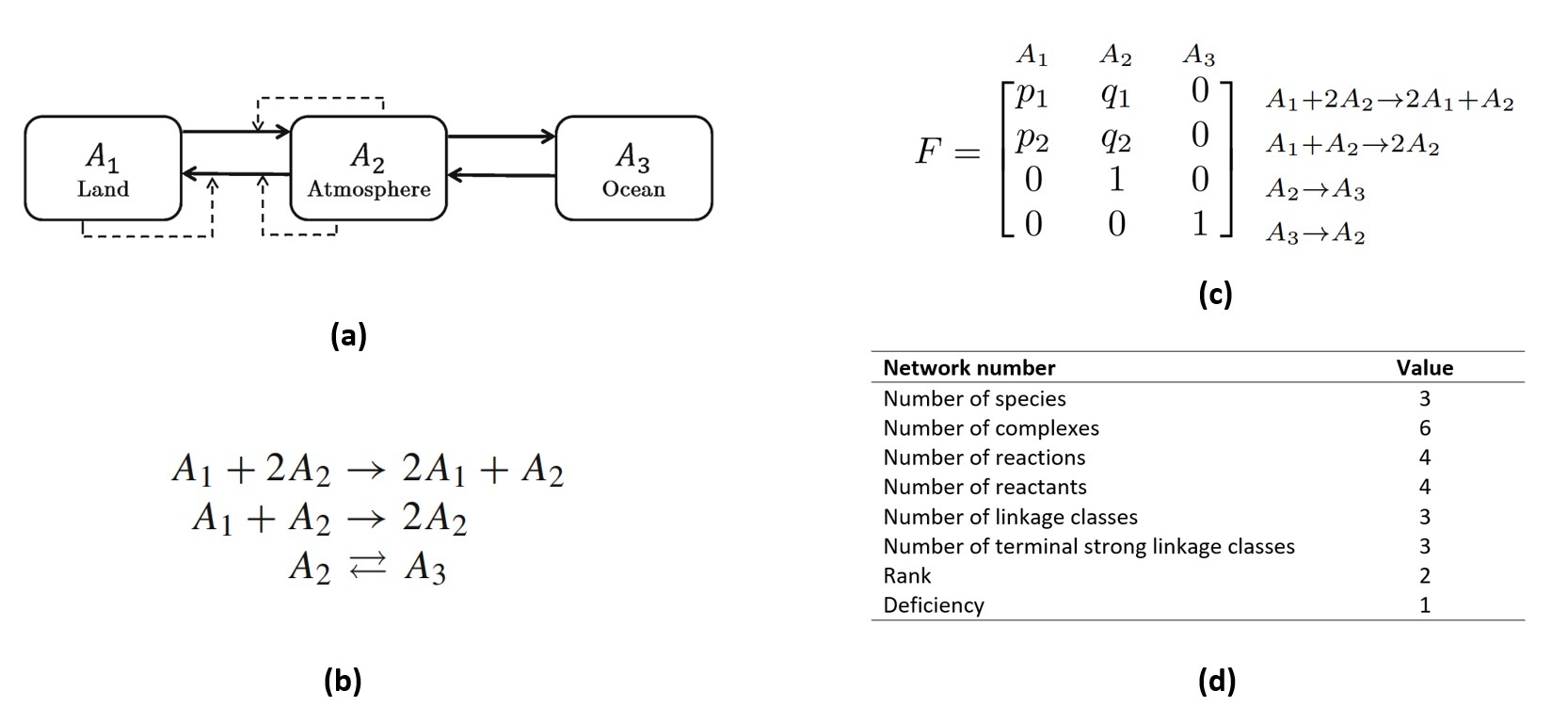}
\caption{The pre-industrial carbon cycle model based from Anderies et al. \cite{AND2013}: (a) its schematic diagram (the solid arrows connecting the pools indicate the transfer of carbon, while the dashed arrows show modulating influence), (b) its CRN representation, (c) its kinetic order matrix, and (d) its CRN numbers.}\label{anderies1}
\end{figure}

Aside from the apparent contrast in the number of major carbon pools considered in the two models, we take note of the significant differences in the assumptions of these models. In the biochemical map of the first system, all carbon fluxes are not influenced by the other components in the system (Figure \ref{schmitz1}(a)). Moreover, the transfer rate function in each flux is described basically by the product of a mass transfer coefficient and a concentration function of the “reactant” pool. Meanwhile, the second model considers biogeochemical feedback or modulating influences in the land-atmosphere interaction. In Figure \ref {anderies1}(a), these feedback or regulatory effects on carbon transfer are depicted by the dashed arrows. One reason for this feedback mechanism is that, unlike the first model, temperature is not fixed in the second system.  

This non-isothermal premise is embedded in the definition of the feedback functions respiration (land to atmosphere) and photosynthesis (atmosphere to land), which are expressed as functions of temperature. In turn, the temperature depends linearly on atmospheric carbon or $A_2$. Hence, these feedback mechanisms are composite functions of $A_2$. Moreover, the modulating arrow in the land-to-atmosphere carbon transfer is due to a logistic function dependent on land ($A_1$) which accounts for competition for space, sunlight, water, or nutrients.

In Schmitz's model, photosynthesis and respiration are not considered as functions but they are basically embedded in the estimates for  carbon transfer rate constants in the land-atmosphere interaction.

Furthermore, aside from the temperature, the human terrestrial off-take quantity (as described earlier) is not constant in the Anderies et al. system. The model of Schmitz accounts for a similar quantity, which is called ``anthropogenic disturbances” by the author, but is held fixed in the analysis.

\section{Reaction network analysis of Schmitz systems}\label{sec4}

In this Section, we first derive new results on properties of the Schmitz model (Sections \ref{sec4.1} – \ref{sec4.5}). We then combine them with known properties from \cite{FLRM2019,HEME2022b,LLMM2022,NEML2019} to provide an overview in Table \ref{table:schmitz} (Section \ref{sec4.6}). Throughout the section, $(\mathscr{N}, K)$ denotes a kinetic representation described in Section \ref{sec3} and is referred to as the \textbf{Schmitz system}. We also use the term \textbf{Schmitz network} for $\mathscr{N}$.

\subsection{The structure of the positive equilibria set of a Schmitz system} \label{sec4.1}
To date, the only known property of $E_+(\mathscr{N}, K)$ is that it is non-empty and a subset consists of elements ``lifted" from a subsystem $(\mathscr{N}',K')$ shown in \cite{FLRM2019}. In this section, we use the concordance of $\mathscr{N}$  and the weak monotonicity of its kinetics to show that it has a unique positive equilibrium in each stoichiometric class. 

\subsubsection{Properties of concordant networks and weakly monotonic kinetics}

Concordant networks were introduced by G. Shinar and M. Feinberg in 2012 \cite{SHFE2012} as an abstraction of continuous flow stirred tank reactors (CFSTRs), a widely used model in chemical engineering. In their view, concordance indicates ``\ldots architectures that by their very nature, enforce duller, more restrictive behavior despite what might be great intricacy in the interplay of many species, even independently of values that kinetic parameters might take". Concordance can hence be seen as a new type of system stability. To precisely define concordance, consider the linear map $L:\mathbb{R}^\mathscr{R} \rightarrow \mathcal{S}$ defined by
$$L (\alpha) = \sum_{y \rightarrow y'} \alpha_{y \rightarrow y'} (y'-y).$$

\begin{definition}
A reaction network $\mathscr{N}$ is \textbf{concordant} is there do not exist an $\alpha \in \text{Ker } (L)$ and a nonzero $\sigma \in \mathcal{S}$ having the following properties:
\begin{enumerate}[label=(\roman*)]
    \item For each $y \rightarrow y' \in \mathscr{R}$ such that $\alpha_{ y \rightarrow y' } \neq 0$, $\text{supp}(y)$ contains a species $S$ for which $\text{sgn} (\sigma_S) = \text{sgn} (\alpha_{ y \rightarrow y' })$, where $\sigma_S$ denotes the term in $\sigma$ involving the species $S$ and $\text{sgn}(\cdot)$ is the signum function.
    \item For each $y \rightarrow y' \in \mathscr{R}$ such that $\alpha_{y \rightarrow y' } =0$, either $\sigma_S=0$ for all $S \in \text{supp}(y)$, or else $\text{supp}(y)$ contains species $S$ and $S'$ for which $\text{sgn} (\sigma_S) = -\text{sgn}(\sigma_{S'})$, but not zero.
\end{enumerate}
A network that is not concordant is \textbf{discordant}.
\end{definition}

Concordance is closely related to two classes of kinetics on a network: injective  and weakly monotonic kinetics. We recall these notions from \cite{SHFE2012} here.

\begin{definition}
A kinetic system $(\mathscr{N}, K)$ is \textbf{injective} if, for each pair of distinct stoichiometrically compatible vectors $x^*, x^{**} \in \mathbb{R}_{\geq 0}^\mathscr{S}$, at least one of which is positive,
$$ \sum_{y \rightarrow y'} K_{y \rightarrow y'} (x^{**}) (y' -y) \neq \sum_{y \rightarrow y'} K_{y \rightarrow y'} (x^{*}) (y' -y).$$
\end{definition}
\noindent Note that an injective kinetic system is necessarily a monostationary system. Moreover, an injective kinetic system cannot admit two distinct stoichiometrically compatible equilibria, at least one of which is positive.

\begin{definition}
A kinetics $K$ for a reaction network $\mathscr{N}$ is \textbf{weakly monotonic} if, for each pair of vectors $x^*, x^{**} \in \mathbb{R}_{\geq 0}^\mathscr{S}$, the following implications hold for each reaction $y \rightarrow y' \in \mathscr{R}$ such that $\text{supp} (y) \subset \text{supp}(x^*)$ and $\text{supp} (y) \subset \text{supp}(x^{**})$: 
\begin{enumerate}[label=(\roman*)]
    \item $K_{y \rightarrow y'} (x^{**}) > K_{y \rightarrow y'} (x^{*}) \implies$ there is a species $S \in \text{supp} (y)$ with $x^{**}_S > x^{*}_S$. 
    \item $K_{y \rightarrow y'} (x^{**}) = K_{y \rightarrow y'} (x^{*}) \implies$ $x^{**}_S  = x^{*}_S$ for all $S \in \text{supp} (y)$ or else there are species $S, S' \in \text{supp} (y)$ with $x^{**}_S  > x^{*}_S$ and $x^{**}_{S'}  < x^{*}_{S'}$.
\end{enumerate}
\end{definition}

\begin{remark}\label{rem:NIK}
Examples of weakly monotonic kinetic systems are mass action systems and a class of power law systems where all kinetic orders are non-negative called non-inhibitory kinetics or \textbf{PL-NIK systems} in \cite{ARCEO2015}. 
\end{remark}

The following two propositions present the close relationship between concordant networks, injective and weakly monotonic kinetics: 

\begin{proposition} (Proposition 4.8 of \cite{SHFE2012})
A weakly monotonic kinetic system $(\mathscr{N},K)$ is injective whenever its underlying reaction network is concordant. In particular, if the underlying reaction network is concordant, then the kinetic system cannot admit two distinct stoichiometrically compatible equilibria, at least one of which is positive.
\end{proposition}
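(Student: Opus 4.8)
The plan is to prove the contrapositive: assuming the weakly monotonic system $(\mathscr{N},K)$ is \emph{not} injective, I would manufacture the pair $\alpha\in\text{Ker}(L)$ and nonzero $\sigma\in\mathcal{S}$ whose existence witnesses that $\mathscr{N}$ is discordant, contradicting concordance. This converts an analytic non-injectivity statement into the combinatorial sign data demanded by the definition of concordance.

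First I would extract the data from the failure of injectivity: there are distinct, stoichiometrically compatible $x^*,x^{**}\in\mathbb{R}^\mathscr{S}_{\geq 0}$, at least one positive, with $\sum_{y\to y'}K_{y\to y'}(x^{**})(y'-y)=\sum_{y\to y'}K_{y\to y'}(x^*)(y'-y)$. I would set $\sigma:=x^{**}-x^*$ and $\alpha_{y\to y'}:=K_{y\to y'}(x^{**})-K_{y\to y'}(x^*)$. Distinctness and stoichiometric compatibility give $\sigma\in\mathcal{S}\setminus\{0\}$, and the displayed identity is exactly $L(\alpha)=\sum_{y\to y'}\alpha_{y\to y'}(y'-y)=0$, so $\alpha\in\text{Ker}(L)$. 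Relabeling if necessary, I would take $x^{**}$ to be the positive vector, so that $\text{supp}(x^{**})=\mathscr{S}$ and the support hypothesis $\text{supp}(y)\subset\text{supp}(x^{**})$ of weak monotonicity holds for every reaction.

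Next I would verify conditions (i) and (ii) of concordance for this $(\alpha,\sigma)$. For (i), fix $y\to y'$ with $\alpha_{y\to y'}\neq 0$. If $\text{supp}(y)\subset\text{supp}(x^*)$, both support hypotheses hold, and implication (i) of weak monotonicity, applied to whichever argument yields the larger rate, produces $S\in\text{supp}(y)$ with $\text{sgn}(\sigma_S)=\text{sgn}(\alpha_{y\to y'})$. If instead $\text{supp}(y)\not\subset\text{supp}(x^*)$, then because $K$ is a chemical kinetics we get $K_{y\to y'}(x^*)=0$ while $K_{y\to y'}(x^{**})>0$ (as $x^{**}>0$), so $\alpha_{y\to y'}>0$, and any $S\in\text{supp}(y)\setminus\text{supp}(x^*)$ satisfies $x^*_S=0<x^{**}_S$, i.e. $\text{sgn}(\sigma_S)=+$. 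For (ii), fix $y\to y'$ with $\alpha_{y\to y'}=0$; here $x^{**}>0$ forces $K_{y\to y'}(x^{**})>0$, hence $K_{y\to y'}(x^*)>0$ and $\text{supp}(y)\subset\text{supp}(x^*)$, so both support hypotheses again hold and implication (ii) of weak monotonicity returns either $\sigma_S=0$ for all $S\in\text{supp}(y)$ or a sign-opposed pair in $\text{supp}(y)$ --- precisely condition (ii). Thus $(\alpha,\sigma)$ certifies that $\mathscr{N}$ is discordant, the desired contradiction, so the system is injective. The ``in particular'' clause is then immediate: two distinct stoichiometrically compatible equilibria (one positive) make both reaction-vector sums equal to $0$, hence equal to each other, contradicting injectivity.

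The step I expect to be most delicate is the boundary bookkeeping for reactions with $\text{supp}(y)\not\subset\text{supp}(x^*)$, where weak monotonicity is silent and one must instead lean on the vanishing characterization of a chemical kinetics together with $x^{**}>0$ to fix the sign of $\alpha_{y\to y'}$ and locate a matching-sign coordinate of $\sigma$. Electing $x^{**}$ as the positive vector is the organizing trick that throws every problematic support failure onto the $x^*$ side and keeps the sign analysis one-directional.
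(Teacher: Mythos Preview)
The paper does not supply its own proof of this proposition; it simply quotes it as Proposition~4.8 of \cite{SHFE2012}. Your argument is correct and is essentially the original Shinar--Feinberg proof: from a failure of injectivity you take $\sigma=x^{**}-x^*\in\mathcal{S}\setminus\{0\}$ and $\alpha_{y\to y'}=K_{y\to y'}(x^{**})-K_{y\to y'}(x^*)\in\mathrm{Ker}\,L$, and then read off the discordance sign conditions directly from the two implications in the definition of weak monotonicity. Your handling of the boundary case (when $\mathrm{supp}(y)\not\subset\mathrm{supp}(x^*)$) via the chemical-kinetics positivity characterization, after arranging $x^{**}\in\mathbb{R}^{\mathscr{S}}_{>0}$, is exactly the right maneuver and matches the source argument.
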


\begin{theorem}\label{theorem:shfe2012} (Theorem 4.11 of \cite{SHFE2012})
A reaction network has injectivity in all weakly monotonic kinetic systems derived from it if and only if the network is concordant.
\end{theorem}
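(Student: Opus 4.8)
The statement is a biconditional, and one of its two implications is already in hand: the proposition just stated (Proposition 4.8 of \cite{SHFE2012}) asserts exactly that a concordant network forces every weakly monotonic kinetics on it to be injective, which is the ``concordant $\Rightarrow$ injectivity in all weakly monotonic systems'' direction. So the only thing left is the converse, and I would prove it in contrapositive form: \emph{if $\mathscr{N}$ is discordant, then there exists a weakly monotonic kinetics $K$ on $\mathscr{N}$ that fails to be injective.} Unwinding the definition of discordance supplies an $\alpha \in \operatorname{Ker}(L)$ and a nonzero $\sigma \in \mathcal{S}$ satisfying the negations of conditions (i) and (ii). The plan is to use $\sigma$ to manufacture two distinct stoichiometrically compatible states and to use $\alpha$ to prescribe the reaction-rate differences between them, so that both states return the same value of the species formation rate function.

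Concretely, first I would fix the test states by setting $x^* = \mathbf{1}$ and $x^{**} = \mathbf{1} + \varepsilon\sigma$ with $\varepsilon>0$ small enough that $x^{**} \in \mathbb{R}^\mathscr{S}_{>0}$. Since $\sigma \in \mathcal{S}$, the two states are stoichiometrically compatible and distinct (as $\sigma \neq 0$), and crucially $\operatorname{sgn}(x^{**}_S - x^*_S) = \operatorname{sgn}(\sigma_S)$ for every species $S$. Next I would build $K$ reaction by reaction as a power-law kinetics $K_{y\to y'}(x) = k_{y\to y'}\,x^{\theta_{y\to y'}}$ with strictly positive kinetic orders on the species of the reactant complex and zero orders elsewhere. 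By Remark \ref{rem:NIK} such PL-NIK kinetics is weakly monotonic for \emph{every} pair of states, so weak monotonicity holds automatically and independently of any later tuning. The remaining freedom in the orders and in the constants $k_{y\to y'}$ is then spent making the rate jump along each reaction equal to $\alpha_{y\to y'}$: choose the orders so that $\sum_{S}\theta_S \log(x^{**}_S/x^*_S)$ carries the sign of $\alpha_{y\to y'}$ (and vanishes when $\alpha_{y\to y'}=0$), then pick $k_{y\to y'}>0$ to scale the difference $K_{y\to y'}(x^{**}) - K_{y\to y'}(x^*)$ to exactly $\alpha_{y\to y'}$.

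The feasibility of that sign prescription is where the discordance conditions do their work, and this is the step I expect to be the main obstacle. When $\alpha_{y\to y'}\neq 0$, condition (i) hands me a reactant species $S_0$ with $\operatorname{sgn}(\sigma_{S_0}) = \operatorname{sgn}(\alpha_{y\to y'})$; loading most of the order weight onto $S_0$ while keeping the others small and positive makes the weighted log-sum, hence the rate jump, inherit the correct sign, after which a single positive rate constant matches the magnitude. When $\alpha_{y\to y'}=0$, condition (ii) guarantees that either all reactant species are unchanged between $x^*$ and $x^{**}$ (so the log-sum is identically zero) or else some reactant species goes up and another goes down; in the latter case the logs take both signs, and I can solve $\sum_{S}\theta_S \log(x^{**}_S/x^*_S)=0$ with all $\theta_S>0$, forcing $K_{y\to y'}(x^{**}) = K_{y\to y'}(x^*)$.

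With these choices in place the conclusion is immediate:
\[
\sum_{y\to y'} \bigl(K_{y\to y'}(x^{**}) - K_{y\to y'}(x^*)\bigr)(y'-y) = \sum_{y\to y'}\alpha_{y\to y'}(y'-y) = L(\alpha) = 0,
\]
so the distinct, stoichiometrically compatible, positive states $x^*$ and $x^{**}$ yield equal values of the species formation rate function \eqref{eq:sfrf}, and $(\mathscr{N},K)$ is not injective. The delicate points to verify carefully are the simultaneous requirements that $K$ be weakly monotonic for \emph{all} pairs (secured once and for all by the strictly-positive PL-NIK choice) while achieving the exact rate-difference matching for this \emph{one} pair, and the check that condition (ii) genuinely admits positive orders summing the relevant logarithms to zero precisely when the reactant species are not all fixed.
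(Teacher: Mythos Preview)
The paper does not supply its own proof of this statement: Theorem~\ref{theorem:shfe2012} is simply quoted from \cite{SHFE2012} without argument, so there is no in-paper proof to compare against. That said, the paper does leave a fingerprint of the original argument: in the proof of Proposition~4.4 it invokes the equivalence ``$\mathscr{N}$ is concordant $\Leftrightarrow$ every PL-NIK system on $\mathscr{N}$ is injective,'' attributing this to Feinberg's remark that it is established \emph{within} the proof of Theorem~4.11 in \cite{SHFE2012}. Your construction---building the non-injective witness as a PL-NIK kinetics with strictly positive orders on reactant species---is therefore exactly the route the original Shinar--Feinberg proof takes, and it recovers this sharper PL-NIK characterization as a byproduct.

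Your argument is sound. The one place worth tightening is the $\alpha_{y\to y'}=0$ case with mixed signs: you assert that $\sum_{S\in\operatorname{supp}(y)}\theta_S\log(x^{**}_S/x^*_S)=0$ admits a strictly positive solution in the $\theta_S$, and this is true, but the clean justification is that once some $c_S:=\log(x^{**}_S/x^*_S)$ is positive and another is negative, the map $(\theta_+,\theta_-)\mapsto \theta_+c_++\theta_-c_-$ already surjects onto $\mathbb{R}$ over $\theta_\pm>0$, so any fixed small positive choice for the remaining $\theta_S$ can be balanced. With that detail filled in, your contrapositive construction is complete and matches the approach the paper implicitly relies on.
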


\noindent The previous statement shows that concordant networks are in fact characterized by any weakly monotonic kinetics on it being necessarily injective.

\subsubsection{Any Schmitz system is a Birch system}
\begin{figure}[t]%
\centering
\includegraphics[width=0.6\textwidth]{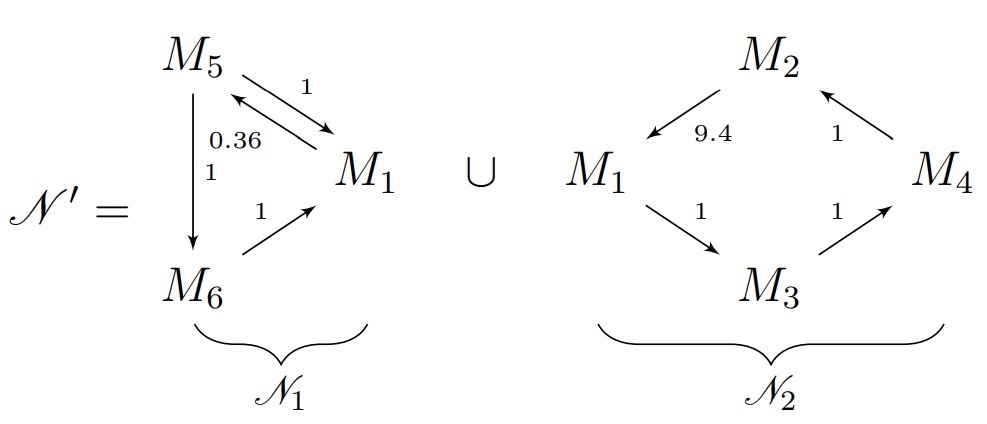}
\caption{The subnetwork $\mathscr{N}'= \mathscr{N}_1 \cup \mathscr{N}_2$ of the Schmitz system investigated in \cite{FLRM2019}}\label{fig3}
\end{figure}

Fortun et al. \cite{FLRM2019} studied the equilibria of the Schmitz's PL-NDK system $(\mathscr{N}, K)$ by identifying a subnetwork $(\mathscr{N}', K')$ with an independent decomposition into two PL-RDK systems and using a result of Joshi and Shiu \cite{JOSH2012} to ``lift" the equilibria of the subnetwork to the whole system. Figure \ref{fig3} shows the subnetwork $\mathscr{N}'= \mathscr{N}_1 \cup \mathscr{N}_2$. Here, we show that both the Schmitz network and its subnetwork form in fact Birch systems, which are defined as follows:

\begin{definition}
A kinetic system is a \textbf{Birch system} if it has a unique positive equilibrium in every stoichiometric class and this equilibrium is complex balanced.
\end{definition}

\noindent The basis for the claimed property of the Schmitz system and its subnetwork is the Shinar-Feinberg Positive Equilibria Theorem:

\begin{theorem} (Theorem 6.8 of \cite{SHFE2012})
If $K$ is a continuous kinetics for a conservative reaction network $\mathscr{N}$, then the kinetic system $(\mathscr{N},K)$ has an equilibrium within each stoichiometric compatibility class. If the network is weakly reversible and concordant, then within each nontrivial stoichiometric compatibility class there is a positive equilibrium. If, in addition, the kinetics is weakly monotonic, then that positive equilibrium is the only equilibrium in the stoichiometric compatibility class containing it.
\end{theorem}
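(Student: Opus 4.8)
The plan is to establish the three assertions in the order they are stated, strengthening the conclusion as each hypothesis is switched on. For the first claim I would use conservativity directly: by definition it furnishes a strictly positive mass vector $m\in\mathbb{R}^\mathscr{S}_{>0}$ with $\langle m,\sigma\rangle=0$ for every $\sigma\in\mathcal{S}$, so $\langle m,x\rangle$ is constant on each stoichiometric compatibility class and every such class $P=(x_0+\mathcal{S})\cap\mathbb{R}^\mathscr{S}_{\geq 0}$ is compact and convex. Because $f(x)=\sum_{y\to y'}K_{y\to y'}(x)(y'-y)$ always lies in $\mathcal{S}$, and because the support property in the definition of the kinetics forces the vector field to be tangent to or point into each coordinate face of $\mathbb{R}^\mathscr{S}_{\geq 0}$, the flow of $\dot x=f(x)$ maps $P$ continuously into itself. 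I would then apply Brouwer's fixed-point theorem to the time-$t$ maps $\phi_t\colon P\to P$ and pass to the limit $t\to 0$ along a convergent subsequence of their fixed points to obtain a point $\bar x\in P$ with $f(\bar x)=0$.

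For the second claim I would promote this equilibrium to a positive one, and I expect this to be the main obstacle. The goal is to localize an equilibrium in the relative interior of $P$ by a Brouwer-degree (fixed-point index) argument. Concordance enters through Theorem \ref{theorem:shfe2012}: every weakly monotonic kinetics on $\mathscr{N}$ is injective, so positive equilibria are isolated and nondegenerate and the degree of $f$ restricted to the relative interior of $P$ is well defined. Weak reversibility enters both as the source of a nonzero degree and as the barrier that keeps equilibria away from $\partial P$. Concretely, I would fix a reference mass-action kinetics on the weakly reversible network $\mathscr{N}$, whose positive equilibrium is unique and nondegenerate by concordance and hence carries index $\pm 1$, and then homotope this reference to the given kinetics $K$. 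The delicate point, and the crux of the whole theorem, is to certify that no equilibrium escapes to the boundary during the homotopy; weak reversibility is precisely what prevents the boundary from absorbing the degree, so the nonzero degree persists and yields a positive equilibrium $x^*\in P$.

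The third claim then follows at once from injectivity. With $K$ now assumed weakly monotonic, Proposition 4.8 of \cite{SHFE2012} applies to the concordant system $(\mathscr{N},K)$ and makes it injective, so no two distinct stoichiometrically compatible vectors, at least one of them positive, can give the same value of $f$. Any equilibrium in the class containing $x^*$ is stoichiometrically compatible with the positive equilibrium $x^*$; injectivity therefore forces it to equal $x^*$. Hence $x^*$ is the unique equilibrium in its stoichiometric compatibility class, which completes the proof.
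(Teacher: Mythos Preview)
The paper does not prove this theorem at all: it is quoted verbatim as Theorem 6.8 of \cite{SHFE2012} and used as a black box to establish that Schmitz systems are Birch systems. There is therefore no ``paper's own proof'' to compare your proposal against.

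On the merits of your sketch: the first and third parts are essentially correct. For the first, your Brouwer argument via time-$t$ maps on the compact convex class $P$ is standard and valid; the limit $\frac{1}{t_n}\int_0^{t_n} f(\phi_s(x_n))\,ds \to f(\bar x)$ does yield $f(\bar x)=0$. For the third, injectivity from Proposition~4.8 of \cite{SHFE2012} indeed forces uniqueness once a positive equilibrium is in hand.

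The second part, however, is where the real content lies, and your outline has a genuine gap. You invoke a degree/homotopy argument from a reference mass-action kinetics to $K$ and say ``weak reversibility is precisely what prevents the boundary from absorbing the degree,'' but you do not say \emph{how}. This is not a minor omission: the actual mechanism in \cite{SHFE2012} is that concordance (not just weak reversibility) rules out boundary equilibria in nontrivial stoichiometric classes for weakly reversible networks, via a separate structural argument (their Proposition~6.5 / Theorem~6.7). Weak reversibility alone does not suffice---there are weakly reversible mass-action systems with boundary equilibria in nontrivial classes---so your appeal to weak reversibility as the boundary barrier is misattributed. You would need to establish, for every kinetics in your homotopy, that no equilibrium sits on $\partial P$, and that requires the concordance hypothesis in an essential way that your sketch does not supply.
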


\begin{proposition}
Any Schmitz system $(\mathscr{N}, K)$ and its subsystem $(\mathscr{N}', K')$ are Birch systems.
\end{proposition}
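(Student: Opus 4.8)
The plan is to split the Birch property into its two defining parts and obtain each from a result already available above: uniqueness (and existence) of a positive equilibrium in every stoichiometric class from the Positive Equilibria Theorem (Theorem 6.8 of \cite{SHFE2012}), and complex balancedness of that equilibrium from zero deficiency via Remark \ref{rem:CB}. Since the argument uses only the structural hypotheses conservative, weakly reversible, concordant, and zero deficiency together with weak monotonicity of the kinetics, I would phrase it once as a reduction and then verify these hypotheses for $(\mathscr{N},K)$ and, identically, for $(\mathscr{N}',K')$.

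First I would dispatch the hypotheses that are immediate. Conservativeness holds because total carbon is a conserved quantity, i.e. there is a strictly positive vector orthogonal to all reaction vectors; continuity of $K$ and weak monotonicity both follow once I read off from the kinetic order matrix in Figure \ref{schmitz1}(c) that every kinetic order is non-negative, so that $(\mathscr{N},K)$ is PL-NIK and hence weakly monotonic by Remark \ref{rem:NIK}. Weak reversibility is visible in Figure \ref{schmitz1}(b), where every complex lies on a directed cycle. For the deficiency, the six complexes are monomolecular and the system has maximal rank (its stoichiometric subspace is a hyperplane), so $s=5$; since deficiency is never negative, $\delta = n-\ell-s = 1-\ell \ge 0$ forces $\ell = 1$ and therefore $\delta = 0$.

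The crux is concordance, the one hypothesis that is neither immediate nor kinetics-dependent. I would verify it directly from the definition: assuming there exist $\alpha \in \operatorname{Ker}(L)$ and a nonzero $\sigma \in \mathcal{S}$ meeting the two sign conditions, I would use the directed-cycle structure of the monomolecular network together with the conservation relation to force $\sigma = 0$, contrary to the assumption that $\sigma$ is nonzero. For a network with six complexes and thirteen reactions the sign bookkeeping across all branching reactions is the delicate step, so this is where the real work lies; a computational concordance test would corroborate the hand argument.

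With conservativeness, weak reversibility, concordance and weak monotonicity in hand, the Positive Equilibria Theorem gives, in each nontrivial stoichiometric compatibility class, a positive equilibrium that is the unique equilibrium there. To see that this equilibrium is complex balanced I would use the linear-algebraic core of Remark \ref{rem:CB}: any steady state satisfies $Y I_a K(x) = 0$, so $I_a K(x) \in \operatorname{Ker} Y \cap \operatorname{Im} I_a$; this intersection has dimension $\delta = 0$, hence $I_a K(x) = 0$ and $x$ is complex balanced. Crucially, this step is purely structural and so is unaffected by the PL-NDK character of $(\mathscr{N},K)$. Combining the two parts shows $(\mathscr{N},K)$ is a Birch system, and since the same checks applied to Figure \ref{fig3} give that $(\mathscr{N}',K')$ is likewise conservative, weakly reversible, concordant, of zero deficiency and PL-NIK, the identical reasoning shows $(\mathscr{N}',K')$ is a Birch system as well.
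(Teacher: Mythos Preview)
Your proposal is correct and follows essentially the same route as the paper: verify that both systems are conservative, weakly reversible, concordant, PL-NIK (hence weakly monotonic), apply the Shinar--Feinberg Positive Equilibria Theorem to get a unique positive equilibrium in each stoichiometric class, and then invoke zero deficiency to conclude that every positive equilibrium is complex balanced. The one difference is that the paper establishes concordance purely by the CRNToolbox test, whereas you propose a direct sign-argument from the definition with a computational check as corroboration; this is more ambitious but not a different strategy, and in practice the paper's computational verification is what settles the point.
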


\begin{proof}
The systems are weakly reversible and via the CRNToolbox test \cite{CRNTool}, concordant. Their stoichiometric subspaces are known to coincide, and the vector $(1,1,1,1,1,1)$ is easily verified to be in the orthogonal complement, implying that they are both conservative. Since the kinetics is PL-NIK on both (by Remark \ref{rem:NIK}), the Shinar-Feinberg theorem cited above implies that the systems have a unique positive equilibrium in each stoichiometric class. Since they both have zero deficiency, it follows from Feinberg's classical result that each positive equilibrium is complex balanced (see Remark \ref{rem:CB}).
\end{proof}

\begin{corollary}
There is a bijection $\omega: E_+(\mathscr{N}, K) \rightarrow E_+(\mathscr{N}', K')$.
\end{corollary}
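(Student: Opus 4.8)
The plan is to exploit the defining feature of a Birch system — exactly one positive equilibrium in each nontrivial stoichiometric class — together with the coincidence of the stoichiometric subspaces of $\mathscr{N}$ and $\mathscr{N}'$ recorded in the preceding proof. Write $\mathcal{S}$ for this common stoichiometric subspace, and let $\Pi$ denote the collection of nontrivial stoichiometric compatibility classes, i.e. the cosets $(x+\mathcal{S})\cap\mathbb{R}^\mathscr{S}_{\geq 0}$ that meet $\mathbb{R}^\mathscr{S}_{>0}$. Because $\mathscr{N}$ and $\mathscr{N}'$ share the same $\mathcal{S}$, they induce the very same family $\Pi$; this shared indexing set is the hinge of the whole argument.

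First I would treat each system separately and parametrize its positive equilibria by $\Pi$. Every $x\in E_+(\mathscr{N}, K)$ is positive and therefore lies in exactly one class of $\Pi$, so the assignment $x\mapsto (x+\mathcal{S})\cap\mathbb{R}^\mathscr{S}_{\geq 0}$ defines a map $\beta\colon E_+(\mathscr{N}, K)\to\Pi$. Uniqueness of the positive equilibrium in each class (the defining Birch property, established in the Proposition) makes $\beta$ injective, while the existence clause of the Shinar-Feinberg Positive Equilibria Theorem quoted above — each nontrivial class of a weakly reversible, concordant network with continuous kinetics contains a positive equilibrium — makes $\beta$ surjective. Hence $\beta$ is a bijection. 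Running the identical argument on $(\mathscr{N}', K')$, which is also a Birch system with the same $\mathcal{S}$, yields a bijection $\beta'\colon E_+(\mathscr{N}', K')\to\Pi$.

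The desired map is then $\omega:=(\beta')^{-1}\circ\beta$, a composition of bijections and so itself a bijection; concretely, $\omega$ sends a positive equilibrium $x$ of $(\mathscr{N}, K)$ to the unique positive equilibrium of $(\mathscr{N}', K')$ lying in the same stoichiometric class as $x$. I do not expect a genuine obstacle here: all the mathematical content is already secured in the fact that both systems are Birch and that their stoichiometric subspaces coincide, and what remains is bookkeeping. The only point warranting mild care is the verification that the two equilibria sets are indexed by the \emph{same} family $\Pi$ — this is exactly where the coincidence of stoichiometric subspaces is used, since without it the two sets could be parametrized by different collections of cosets and no canonical bijection would be available.
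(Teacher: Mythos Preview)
Your proposal is correct and follows essentially the same approach as the paper: both arguments rest on the Birch property (exactly one positive equilibrium per stoichiometric class) together with the coincidence of the stoichiometric subspaces of $\mathscr{N}$ and $\mathscr{N}'$, so that the two equilibria sets are parametrized by the same family of classes. Your version simply makes the bijections $\beta$, $\beta'$ and the composition $\omega=(\beta')^{-1}\circ\beta$ explicit, whereas the paper states the conclusion in a single sentence.
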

\begin{proof}
Each equilibria set has a unique member in a stoichiometric class, and the underlying networks have the same stoichiometric classes.
\end{proof}


\subsubsection{The finest independent and incidence independent decompositions of the Schmitz network} \label{sec:finest}

A criterion for the existence of a non-trivial independent decomposition and an algorithm to determine the finest such decomposition were presented by Hernandez and de la Cruz \cite{HECR2021}. The algorithm is easily adapted to find the finest incidence-independent decomposition of a network (\cite{HEAM2022}). Such decompositions have been found to be very useful in reaction network analysis of large networks, e.g. metabolic insulin signaling (\cite{LUML2022}). We compute them to highlight a difference between the two carbon cycle models.

The finest independent decomposition of the Schmitz network is given by the subnetworks $\mathscr{N}_1$  and $\mathscr{N}_2 \cup \mathscr{N}_3$, where 
\begin{align*}
    \mathscr{R}_1 &= \{ M_1 \leftrightarrows M5, M_5 \rightarrow M_6, M_6 \rightarrow M_1 \},  \\
    \mathscr{R}_2 &= \{ M_2 \rightarrow M_1, M_1 \rightarrow M_3, M_3 \rightarrow M_4, M_4 \rightarrow M_2 \}, \\
    \mathscr{R}_3 &= \{M_1 \rightarrow M_2, M_2 \rightarrow M_3, M_2 \rightarrow M_4, M_4 \rightarrow M_3, M_3 \rightarrow M_1 \}.
\end{align*}
It is also the finest incidence-independent decomposition because the network is monomolecular, implying the equivalence of independence and incidence independence.

\subsection{ACR and equilibria co-multiplicity analysis in Schmitz systems} \label{sec4.2}

In this Section, we show that the Schmitz kinetic representation does not possess ACR in any of its six species. This property is derived by exhibiting distinct positive equilibria whose coordinates differ in all species. Furthermore, we study the relationship of ACR to the novel concepts of equilibria co-monostationarity and co-multistationarity.

\subsubsection{Absence of ACR in Schmitz systems}

Absolute concentration robustness or ACR refers to a condition in which the concentration of a species in a network attains the same value in every positive steady-state set by parameters and does not depend on initial conditions. This concept was introduced by Shinar and Feinberg in their well-cited paper \cite{SHFE2010} published in Science in 2010.  Notably, they presented sufficient structure-based conditions for a mass action system to display ACR on a particular species. This result was extended to power law kinetic systems of low deficiency \cite{FLRM2021,FOME2021}, subsets of poly-PL kinetic systems \cite{LLMM2022}, and Hill-type kinetic systems \cite{HEME2021}. For larger systems and those with higher deficiency, independent decomposition helps identify ACR \cite{FMF2021}. In \cite{HEME2022}, a general approach extended the species hyperplane approach introduced in \cite{LLMM2022}.

The basis of our ACR analysis is the following Proposition:

\begin{proposition}\label{prop:schmitzEquilibria}
The set of positive equilibria of any Schmitz system can be parametrized by $M_2$ as follows:
\begin{align*} 
M_2 &= M_2 \\ 
M_3 &=\text{Root of } \lbrace k_{31}k_{12} \left( k_{42} + k_{43} \right)M_3^{10.2} + k_{42}k_{34} (k_{12} +k_{31}) M_3 \\
& - \left( \left[ k_{23} (k_{42} +k_{43}) +k_{24}k_{43} \right](k_{12}+k_{13})M_2 + k_{21} k_{13} (k_{42}+k_{43})M_2^{9.4} \right) = 0 \rbrace \\
M_1 &=\dfrac{k_{21}M_2^{9.4}+k_{31}M_3^{10.2}}{k_{12}+k_{13}} \\
M_4 &=\dfrac{k_{24}M_2+k_{34}M_3}{k_{42}+k_{43}} \\
M_5 &=\dfrac{k_{15}}{k_{51}+k_{56}} \left( \dfrac{k_{21}M_2^{9.4}+k_{31}M_3^{10.2}}{k_{12}+k_{13}} \right)^{0.36} \\
M_6 &=\dfrac{k_{56}k_{15}}{k_{61}(k_{51}+k_{56})} \left( \dfrac{k_{21}M_2^{9.4}+k_{31}M_3^{10.2}}{k_{12}+k_{13}} \right)^{0.36} \\
\end{align*}
\end{proposition}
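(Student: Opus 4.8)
The plan is to compute $E_+(\mathscr{N},K)$ directly from the steady-state condition $f(x)=NK(x)=0$, reading each monomolecular rate $k_{ij}M_i^{a_{ij}}$ off the kinetic order matrix of Figure \ref{schmitz1}(c), and then to decouple the six scalar balance equations using the finest independent decomposition from Section \ref{sec:finest}. Since $\mathscr{N}=\mathscr{N}_1\cup(\mathscr{N}_2\cup\mathscr{N}_3)$ is independent, Theorem \ref{feinberg theorem} gives $E_+(\mathscr{N},K)=E_+(\mathscr{N}_1,K_1)\cap E_+(\mathscr{N}_2\cup\mathscr{N}_3,K_{23})$, so a positive vector is an equilibrium precisely when it simultaneously balances the monomolecular cycle $\mathscr{N}_1$ on $\{M_1,M_5,M_6\}$ and the subnetwork $\mathscr{N}_2\cup\mathscr{N}_3$ on $\{M_1,M_2,M_3,M_4\}$; the two blocks share only the coordinate $M_1$.

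First I would dispatch $\mathscr{N}_1$. The reactions $M_1\leftrightarrows M_5$, $M_5\to M_6$, $M_6\to M_1$ give the balances $(k_{51}+k_{56})M_5=k_{15}M_1^{0.36}$ and $k_{61}M_6=k_{56}M_5$, which solve at once for $M_5$ and $M_6$ as the displayed multiples of $M_1^{0.36}$ (the remaining $M_1$-balance of $\mathscr{N}_1$ is their consequence, reflecting that this block is conservative of rank $2$). Next, writing production-minus-consumption for each species of $\mathscr{N}_2\cup\mathscr{N}_3$ produces four balances; two of them, the $M_1$-balance $(k_{12}+k_{13})M_1=k_{21}M_2^{9.4}+k_{31}M_3^{10.2}$ and the $M_4$-balance $(k_{42}+k_{43})M_4=k_{24}M_2+k_{34}M_3$, are already the stated formulas for $M_1$ and $M_4$ in terms of $M_2$ and $M_3$. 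Because the whole network is conservative of rank $5$ (one linkage class, zero deficiency, conservation vector $(1,1,1,1,1,1)$), these four balances are linearly dependent and sum to zero; hence only three are independent and a single free coordinate remains, which I take to be $M_2$.

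It then remains to pin down $M_3$. I would substitute the expressions for $M_1$ and $M_4$ into one of the remaining independent balances, the $M_3$-balance $k_{13}M_1+k_{23}M_2+k_{43}M_4=k_{34}M_3+k_{31}M_3^{10.2}$. After clearing the denominators $(k_{12}+k_{13})$ and $(k_{42}+k_{43})$ and collecting powers of $M_2$ and $M_3$, the $M_1$- and $M_4$-contributions cancel part of the right-hand side and leave exactly the displayed implicit relation in $M_2$ and $M_3$; this is the ``Root of'' equation defining $M_3$ as a function of $M_2$, and the dependent $M_2$-balance is then automatically satisfied. Back-substituting this $M_3$ into the formulas for $M_1$ and $M_4$, and then $M_1$ into those for $M_5$ and $M_6$, yields the full parametrization.

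The routine difficulty lies in the bookkeeping of the elimination step: one must choose three independent balances among the four of $\mathscr{N}_2\cup\mathscr{N}_3$, keep the two denominators straight, and verify that the spurious $M_3^{10.2}$- and $M_3$-terms cancel so that the reduced relation carries precisely the stated coefficients. The more conceptual point is that, because the exponent $10.2$ is non-integer, the equation for $M_3$ is transcendental and admits no closed form, so $M_3$ can only be specified implicitly as a root; that such a root exists, is positive, and is unique for each admissible value of $M_2$ is not furnished by the algebra itself but follows from the Birch property established in the preceding proposition, which guarantees a unique positive equilibrium in every stoichiometric class.
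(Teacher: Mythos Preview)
Your proposal is correct and follows essentially the same route as the paper: both exploit the finest independent decomposition $\mathscr{N}_1\cup(\mathscr{N}_2\cup\mathscr{N}_3)$, solve the $\mathscr{N}_1$ block for $M_5,M_6$ in terms of $M_1$, read off $M_1$ and $M_4$ from their balances in the second block, and then use one remaining independent balance to obtain the implicit equation for $M_3$. The only cosmetic difference is that the paper substitutes into the $M_2$-balance whereas you substitute into the $M_3$-balance; since the four balances of $\mathscr{N}_2\cup\mathscr{N}_3$ sum to zero, the two eliminations are equivalent and yield the same relation.
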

\begin{proof}
Consider the finest independent decomposition $\mathscr{N}_1 $ and $\mathscr{N}_2 \cup \mathscr{N}_3$ of the Schmitz system described in Section \ref{sec:finest}, where $\mathscr{R}_1 = \{ M_1 \leftrightarrows M5, M_5 \rightarrow M_6, M_6 \rightarrow M_1 \}$ and $\mathscr{R}_2 \cup \mathscr{R}_3=\{ M_1 \leftrightarrows M2, M_1 \leftrightarrows M3, M_2 \rightarrow M3, M_2 \leftrightarrows M4, M_3 \leftrightarrows M4 \} $. Hence, the equilibria of the system can be obtained by taking the intersection of the equilibria set of $\mathscr{N}_1$ and the equilibria set of $\mathscr{N}_2 \cup \mathscr{N}_3$.

The ODE system of $\mathscr{N}_1$ is given by
\begin{align}
\dot{M}_1 &= k_{51}M_5 + k_{61} M_6 - k_{15}M_1^{0.36} \label{eq1}\\
\dot{M}_5 &= k_{15}M_1^{0.36} - k_{51}M_5 -k_{56}M_5 \label{eq2} \\
\dot{M}_6 &= k_{56}M_5 - k_{61} M_6 \label{eq3}
\end{align}
Setting the above equations to 0, we obtain
\begin{equation}
M_5 = \dfrac{k_{15}}{k_{51}+k_{56}} M_1^{0.36} \quad \text{and} \quad 
M_6 = \dfrac{k_{56}}{k_{61}}M_5= \dfrac{k_{56}}{k_{61}}\cdot \dfrac{k_{15}}{k_{51}+k_{56}} M_1^{0.36}.
\end{equation}
Hence the equilibria set $E_1$ of $\mathscr{N}_1$ is comprised of vectors that can be parametrized by $M_1$. That is,
$$
E_1 (\mathscr{N}_1,K) = \left\{ \left( M_1, \dfrac{k_{15}}{k_{51}+k_{56}} M_1^{0.36},  \dfrac{k_{56}k_{15}}{k_{61}(k_{51}+k_{56})} M_1^{0.36} \right) \middle\vert M_1 \in \mathbb{R}_{>0}  \right\}.
$$
On the other hand, the ODE system of $\mathscr{N}_2 \cup \mathscr{N}_3$ is given by
\begin{align}
\dot{M}_1 &= k_{21} M_2^{9.4} +k_{31}M_3^{10.2} -k_{12}M_1 -k_{13}M_1 \label{eq5}\\
\dot{M}_2 &= k_{12}M_1 + k_{42}M_4 -k_{23}M_2 - k_{24}M_2 - k_{21}M_2^{9.4}  \label{eq6}\\
\dot{M}_3 &= k_{13}M_1 + k_{23}M_2 + k_{43}M_4 - k_{34}M_3 - k_{31}M_3^{10.2} \label{eq7}\\
\dot{M}_4 &= k_{24}M_2 + k_{34}M_3 -k_{42}M_4 - k_{43}M_4 \label{eq8}
\end{align}
Set each equation to 0.
From Equations (\ref{eq5}), (\ref{eq6}), and (\ref{eq8}), we respectively get
\begin{align}
M_1 &= \dfrac{k_{21}M_2^{9.4}+k_{31}M_3^{10.2}}{k_{12}+k_{13}} \label{eq9}\\
M_1 &= \dfrac{(k_{23}+k_{24}) M_2+k_{21} M_2^{9.4}-k_{42} M_4}{k_{12}} \label{eq10}\\
M_4 &=\dfrac{k_{24}M_2 + k_{34}M_3}{k_{42}+k_{43}} \label{eq11}
\end{align}
Combining Eqs. (\ref{eq10}) and (\ref{eq11}), we get
\begin{equation}
M_1 = \dfrac{(k_{23}+k_{24})M_2+k_{21}M_2^{9.4}}{k_{12}} - \dfrac{k_{42}}{k_{12}} \left(\dfrac{k_{24}M_2 + k_{34}M_3}{k_{42}+k_{43}} \right). \label{eq12}
\end{equation}
Comparing Eqs. (\ref{eq9}) and (\ref{eq12}), we obtain
\begin{equation}
\dfrac{k_{21}M_2^{9.4}+k_{31}M_3^{10.2}}{k_{12}+k_{13}} = \dfrac{(k_{23}+k_{24})M_2+k_{21}M_2^{9.4}}{k_{12}} - \dfrac{k_{42}}{k_{12}} \left(\dfrac{k_{24}M_2 + k_{34}M_3}{k_{42}+k_{43}} \right).
\end{equation}
To express $M_3$ in terms of $M_2$, we can manipulate the previous equation to get
\begin{align*}
& k_{31} k_{12} (k_{42}+k_{43} ) M_3^{10.2}+k_{42} k_{34} (k_{12}+k_{31} ) M_3 = \\ & \left[k_{23}(k_{42}+k_{43} )+k_{24} k_{43} \right](k_{12}+k_{13} ) M_2+k_{21} k_{13} (k_{42}+k_{43} ) M_2^{9.4}
\end{align*}
Hence, each of the components of an equilibrium of any Schmitz system is expressible as $M_2$.
\end{proof}

\begin{corollary}
There are no ACR species in a Schmitz system.
\end{corollary}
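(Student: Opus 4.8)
The plan is to exploit the explicit parametrization of $E_+(\mathscr{N},K)$ by $M_2$ established in Proposition \ref{prop:schmitzEquilibria} and to show that every one of the six coordinate functions genuinely varies with $M_2$. Recall that a species has ACR precisely when its coordinate takes one and the same value at every positive equilibrium; hence it suffices to exhibit, for each species, two positive equilibria whose values in that species differ. I would obtain all six witnesses simultaneously by proving that each coordinate is a strictly increasing (in particular non-constant) function of the parameter $M_2$ on $(0,\infty)$.

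First I would analyze the implicit relation defining $M_3$. Writing the defining identity as $L(M_3)=R(M_2)$, where
\[
L(M_3) = k_{31}k_{12}(k_{42}+k_{43})M_3^{10.2} + k_{42}k_{34}(k_{12}+k_{31})M_3,
\]
\[
R(M_2) = \left[k_{23}(k_{42}+k_{43})+k_{24}k_{43}\right](k_{12}+k_{13})M_2 + k_{21}k_{13}(k_{42}+k_{43})M_2^{9.4},
\]
I note that all rate constants are positive, so $L$ is a strictly increasing bijection of $(0,\infty)$ onto $(0,\infty)$ and $R$ is strictly increasing on $(0,\infty)$. Consequently, for each $M_2>0$ there is a unique $M_3>0$ with $L(M_3)=R(M_2)$, and $M_3=L^{-1}(R(M_2))$ is a strictly increasing function of $M_2$.

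Next I would propagate this monotonicity through the remaining closed-form expressions. Since $M_2\mapsto M_2^{9.4}$ and $M_2\mapsto M_3^{10.2}$ are both strictly increasing, the formula for $M_1$ (a positive combination of these) is strictly increasing in $M_2$; likewise $M_4$ is strictly increasing, being a positive combination of $M_2$ and $M_3$. Because $M_5$ and $M_6$ are positive multiples of $M_1^{0.36}$ with positive exponent, both are strictly increasing in $M_1$, hence in $M_2$. Together with the trivial monotonicity of $M_2$ itself, this shows all six coordinates are strictly increasing, so choosing any two distinct parameter values $M_2^{*}\neq M_2^{**}$ yields two positive equilibria differing in every species, and no species can have ACR.

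The only delicate point, and the step I expect to be the main obstacle, is establishing that the implicitly defined $M_3$ is a genuine single-valued, monotone function of $M_2$; once that is secured, the rest follows immediately from the explicit formulas. As indicated above, this reduces entirely to the strict monotonicity of the two sides $L$ and $R$ of the defining polynomial identity on the positive orthant, which is guaranteed by the positivity of all rate constants.
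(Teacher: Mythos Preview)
Your proposal is correct and follows essentially the same approach as the paper's proof, which simply asserts ``by inspection, two different values for $M_2$ will yield different equilibria sets varying in respective components.'' You have made this precise by establishing strict monotonicity of each coordinate in $M_2$, which is exactly the content the paper leaves implicit; the only subtlety you flagged (the well-definedness and monotonicity of the implicitly defined $M_3$) is handled correctly via the strict monotonicity of $L$ and $R$ on $(0,\infty)$.
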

\begin{proof}
 By inspection, two different values for $M_2$ will yield different equilibria sets varying in respective components. Hence, the system will not yield ACR in any species.
\end{proof}

\subsubsection{ACR analysis of conservative closed reaction networks of maximal}\label{sec4.2.2}

A reaction network is called \textbf{open} if its stoichiometric subspace is the entire species space, i.e., $s=m$; otherwise, it called \textbf{closed}. Hence, a closed network has maximal rank $m-$1 (and minimal rank = 1). We have observed an interesting interplay of structural and kinetic properties on such network. In this Section, for conservative closed networks of maximal rank, we show that new ``dual" properties and the occurrence of absolute concentration robustness (ACR) in species are closely interrelated. 

We introduce new ``dual" concepts for mono- and multistationarity:

\begin{definition}
A kinetic system $(\mathscr{N},K)$ (with at least two distinct equilibria) is \textbf{co-monostationary} if each \textbf{co-stoichiometric class} $x + \mathcal{S}^\perp $ contains at most one positive equilibrium. Similarly, a system is called \textbf{co-multistationary} if there is a co-stoichiometric class containing two distinct equilibria.
\end{definition}
\noindent The term is derived from the definition of a monostationary (multistationary) network where each stoichiometric class $x + \mathcal{S}$ contains at most (more than) one positive equilibrium. 

\begin{proposition}
Let $\Delta E_+$ be the set of equilibria differences of $(\mathscr{N},K)$. Then $(\mathscr{N},K)$ is co-mononostationary if and only if  $\Delta E_+ \cap \mathcal{S}^\perp = \{0\}$.
\end{proposition}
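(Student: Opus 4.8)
The plan is to prove the biconditional by translating the geometric condition defining co-monostationarity directly into an algebraic statement about differences of equilibria. The single observation that drives everything is that two positive equilibria $x^{*}, x^{**} \in E_+$ lie in the same co-stoichiometric class if and only if $x^{**} - x^{*} \in \mathcal{S}^\perp$: indeed, $x^{*}$ and $x^{**}$ both belong to $x + \mathcal{S}^\perp$ for some $x$ exactly when their difference is a member of the subspace $\mathcal{S}^\perp$. Once this is recorded, each direction is a short formal argument, and no property of the kinetics beyond the definition of $E_+$ and the defining relation $\Delta E_+ = \{ x^{**} - x^{*} \mid x^{*}, x^{**} \in E_+ \}$ is needed.

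For the forward direction I would assume $(\mathscr{N},K)$ is co-monostationary and take an arbitrary $d \in \Delta E_+ \cap \mathcal{S}^\perp$. Writing $d = x^{**} - x^{*}$ with $x^{*}, x^{**} \in E_+$, the membership $d \in \mathcal{S}^\perp$ places $x^{*}$ and $x^{**}$ in a common co-stoichiometric class; co-monostationarity then forces $x^{*} = x^{**}$, whence $d = 0$. Since the reverse inclusion $\{0\} \subseteq \Delta E_+ \cap \mathcal{S}^\perp$ is automatic (take $x^{*} = x^{**}$ and use $0 \in \mathcal{S}^\perp$), this yields $\Delta E_+ \cap \mathcal{S}^\perp = \{0\}$.

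For the converse I would argue contrapositively: suppose the system is not co-monostationary, so some co-stoichiometric class contains two distinct positive equilibria $x^{*} \neq x^{**}$. Their difference is then a nonzero vector lying simultaneously in $\Delta E_+$ (by definition) and in $\mathcal{S}^\perp$ (by the observation above), so $\Delta E_+ \cap \mathcal{S}^\perp \neq \{0\}$. Contraposing delivers the desired implication, completing the equivalence.

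I do not anticipate a substantive obstacle here: the entire content is the correct identification of ``sharing a co-stoichiometric class'' with ``difference lying in $\mathcal{S}^\perp$'', together with the remark that $0$ always lies in the intersection, so that the nontrivial assertion is precisely that the intersection contains nothing else. The only point requiring minor care is the standing hypothesis, built into the definition of co-monostationarity, that the system possess at least two distinct equilibria; this merely ensures the statement is non-vacuous and does not affect the logical equivalence.
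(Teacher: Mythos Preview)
Your argument is correct and is exactly the straightforward verification the paper has in mind; indeed, the paper records no proof beyond the remark that it is immediate. You have simply written out the details of that unpacking, and there is nothing to add.
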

\noindent The proof is straightforward.

\begin{proposition} 
Any Schmitz system is co-monostationary.
\end{proposition}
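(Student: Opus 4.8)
The plan is to verify the criterion of the preceding proposition, namely that $\Delta E_+ \cap \mathcal{S}^\perp = \{0\}$. First I would pin down $\mathcal{S}^\perp$ itself: the Schmitz network is conservative and of maximal rank ($s = m-1 = 5$, so $\dim \mathcal{S}^\perp = 1$), and since the vector $\mathbf{1} := (1,1,1,1,1,1)$ is already known to lie in $\mathcal{S}^\perp$, we get $\mathcal{S}^\perp = \operatorname{span}\{\mathbf{1}\}$. Co-monostationarity is therefore equivalent to the assertion that no two distinct positive equilibria differ by a constant vector $c\,\mathbf{1}$; so I would assume for contradiction that $x, x' \in E_+$ satisfy $x - x' = c\,\mathbf{1}$ with $c \neq 0$ and drive toward $c = 0$.

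Next I would invoke the explicit description of $E_+$ in Proposition \ref{prop:schmitzEquilibria}. Writing the two equilibria through the parameter $M_2$ with values differing by $c$, the hypothesis $x - x' = c\,\mathbf{1}$ forces \emph{every} coordinate difference to equal the single scalar $c$; in particular the associated $M_3$- and $M_1$-values also move by exactly $c$. The crispest obstruction comes from the coordinates slaved to $M_1$: at any positive equilibrium one has $M_5 = \tfrac{k_{15}}{k_{51}+k_{56}}\,M_1^{0.36}$ and $M_6 = \tfrac{k_{56}}{k_{61}}\,M_5$, so the $M_6$-difference is the fixed multiple $\tfrac{k_{56}}{k_{61}}$ of the $M_5$-difference. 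Demanding that both equal $c$ yields $\bigl(\tfrac{k_{56}}{k_{61}} - 1\bigr)c = 0$, and the parallel bookkeeping on $M_4 = \tfrac{k_{24}M_2 + k_{34}M_3}{k_{42}+k_{43}}$ (with $M_2$ and $M_3$ each shifting by $c$) gives $\bigl(\tfrac{k_{24}+k_{34}}{k_{42}+k_{43}} - 1\bigr)c = 0$. For the Schmitz rate constants these prefactors are nonzero, so $c = 0$, contradicting $c \neq 0$.

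The step I expect to be the main obstacle is making the conclusion uniform over all admissible rate constants instead of leaning on a single linear relation: each relation above forces $c = 0$ only off a codimension-one locus in parameter space (such as $k_{56} = k_{61}$, or $k_{24}+k_{34} = k_{42}+k_{43}$). To close those degenerate cases I would feed the genuinely nonlinear relations back in — the concave dependence $M_5 = \tfrac{k_{15}}{k_{51}+k_{56}}\,M_1^{0.36}$ with non-integer exponent $0.36 \neq 1$, together with $M_1 = \tfrac{k_{21}M_2^{9.4} + k_{31}M_3^{10.2}}{k_{12}+k_{13}}$ — and show that requiring $M_1^{0.36}$, $M_1$, $M_2^{9.4}$, and $M_3^{10.2}$ to all increase by the same additive constant $c$ under the same parameter shift is impossible for $c \neq 0$. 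Geometrically this says that the equilibrium curve, strictly bent by the non-integer powers, admits no secant parallel to $\mathbf{1}$; converting that picture into the clean inequality that excludes $c \neq 0$ is the technical heart of the argument.
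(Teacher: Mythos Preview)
Your approach is essentially the paper's: identify $\mathcal{S}^\perp=\operatorname{span}\{(1,1,1,1,1,1)\}$ and then appeal to the explicit parametrization of $E_+$ in Proposition~\ref{prop:schmitzEquilibria} to rule out two equilibria differing by a nonzero multiple of $\mathbf{1}$. The paper's own argument is in fact much terser than yours---it simply says the difference ``would be a positive multiple of the basis vector, which is not possible in view of the parametrization'' and stops there, without isolating a specific coordinate relation or addressing the degenerate rate-constant loci you worry about; so your write-up already supplies more detail than the published proof, and the ``technical heart'' you flag is a gap the paper leaves implicit rather than one you are missing.
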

\begin{proof}
 Lao et al. \cite{LLMM2022} showed that $(1,1,1,1,1,1)$ is a basis for $\mathcal{S}^\perp$ where $\mathcal{S}$ is the stoichiometric subspace of any Schmitz system, showing that it is a conservative, closed kinetic system of maximal rank. If the difference of two distinct equilibria were in $\mathcal{S}^\perp$, it would be a positive multiple of the basis vector, which is not possible in view of the parametrization in Proposition \ref{prop:schmitzEquilibria}.
\end{proof}

We establish the connection beween the occurence of ACR and co-monostationarity for conservative closed network with maximal rank:

\begin{proposition}\label{prop:ACRthenco-mono}
Let $\mathscr{N}$ be a conservative closed network of maximal rank. If $(\mathscr{N},K)$ has an ACR species, then $(\mathscr{N},K)$ is co-monostationary.
\end{proposition}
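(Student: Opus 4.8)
The plan is to reduce the statement to the one-dimensionality of $\mathcal{S}^\perp$ together with the strict positivity of its generator, and then let the ACR condition pin down the single remaining scalar degree of freedom. First I would record the structural consequences of the three hypotheses. Because $\mathscr{N}$ is closed of maximal rank, $s = m-1$, so $\dim \mathcal{S}^\perp = m - s = 1$; and because $\mathscr{N}$ is conservative, $\mathcal{S}^\perp$ contains a strictly positive vector. Combining these, $\mathcal{S}^\perp = \mathrm{span}\{c\}$ for some $c \in \mathbb{R}^\mathscr{S}_{>0}$.

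Next I would invoke the characterization established immediately above, namely that $(\mathscr{N},K)$ is co-monostationary if and only if $\Delta E_+ \cap \mathcal{S}^\perp = \{0\}$. This turns the goal into showing that the only equilibrium difference lying in $\mathcal{S}^\perp$ is the zero vector. Accordingly, I would take $x^*, x^{**} \in E_+$ with $x^{**} - x^* \in \mathcal{S}^\perp$; by the previous step $x^{**} - x^* = \lambda c$ for some $\lambda \in \mathbb{R}$.

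This is where ACR enters, and it is the only place it is used. Letting $S^*$ denote the ACR species, its coordinate is by definition the same in every positive equilibrium, so $(x^{**} - x^*)_{S^*} = 0$. On the other hand $(x^{**} - x^*)_{S^*} = \lambda\, c_{S^*}$, and $c_{S^*} > 0$ because $c$ is strictly positive. Hence $\lambda = 0$, so $x^{**} = x^*$, giving $\Delta E_+ \cap \mathcal{S}^\perp = \{0\}$ and therefore co-monostationarity.

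The argument is immediate once the first paragraph is in place, so there is no serious obstacle; the subtle point worth flagging is that the proof uses all three hypotheses at exactly one juncture. Maximal rank supplies the line $\mathcal{S}^\perp$, conservativity supplies a nonnegative conservation law, and \emph{strict} positivity is precisely what guarantees $c_{S^*} \neq 0$. If the conservation vector were merely nonnegative, the conclusion could fail when the ACR species lies outside its support, so I would take care to invoke the strict form of conservativity rather than mere nonnegativity.
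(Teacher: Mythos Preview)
Your proof is correct and follows essentially the same route as the paper's: both argue that a nonzero equilibrium difference in $\mathcal{S}^\perp$ cannot have a zero coordinate at the ACR species. The paper's version is a one-line contrapositive that leaves implicit the facts you spell out---that $\dim \mathcal{S}^\perp = 1$ and that its generator is strictly positive---so your write-up simply makes explicit the reasoning behind the claimed contradiction.
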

\begin{proof}
 Suppose, on the contrary, there are equilibria in a co-stoichiometric class.  Then their difference is in $\mathcal{S}^\perp$, but has a zero in the coordinate of the ACR species, a contradiction.
\end{proof}

\begin{remark}
While co-multistationarity is a necessary condition for ACR in conservative, closed kinetic systems with maximal rank, any Schmitz system shows that it is not sufficient.
\end{remark}

\subsection{Kinetic/Stoichiometric Subspace Coincidence (KSSC) of a Schmitz system} \label{sec4.3}

We use a result of Nazareno et al. \cite{NEML2019} to show that the Schmitz system has the KSSC property.

\subsubsection{A brief review of KSSC results}

The coincidence of the kinetic and stoichiometric subspaces of a kinetic system is a necessary condition for the existence of non-degenerate (and subsequently stable) equilibria. Furthermore, if two systems have the KSSC property, any dynamic equivalence between them also leaves the stoichiometric subspace invariant. For mass action systems, M. Feinberg \cite{FEIN2019} observes an extreme ``lack of robustness" of system properties in systems without KSSC.

M. Feinberg and F. Horn \cite{FEHO1977} derived the “classical” KSSC Theorem for mass action systems in 1977:

\begin{theorem} (\cite{FEHO1977})
Let $\mathcal{K}$ be the kinetic subspace of a mass action system.
\begin{enumerate}[label=(\roman*)]
    \item If $t - \ell =0$, then $\mathcal{K}=\mathcal{S}$.
    \item If $t - \ell > \delta$, then $\mathcal{K} \neq \mathcal{S}$.
    \item If $0<t - \ell \leq \delta$, $\mathcal{K}=\mathcal{S}$ or $\mathcal{K} \neq \mathcal{S}$ depending on the rate constants.
\end{enumerate}
\end{theorem}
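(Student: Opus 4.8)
The plan is to reduce the whole statement to a dimension count built on the factorization $f = Y \circ A_k \circ \Psi_k$ available for mass action (complex factorizable) kinetics. First I would pin down the kinetic subspace concretely. Since $\mathcal{K}$ is by definition the span of the image of the species formation rate function, and the monomial functions $x \mapsto x^y$ attached to the distinct complexes $y \in \mathscr{C}$ are linearly independent on $\mathbb{R}^\mathscr{S}_{>0}$, the set $\{\Psi_k(x) \mid x \in \mathbb{R}^\mathscr{S}_{>0}\}$ lies in no proper subspace of $\mathbb{R}^\mathscr{C}$. Hence $\mathcal{K} = Y(\operatorname{Im} A_k)$, the image of the composite $Y \circ A_k$, and since $\mathcal{S} = \operatorname{Im} N = Y(\operatorname{Im} I_a)$ we already get $\mathcal{K} \subseteq \mathcal{S}$.

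Next I would assemble three structural facts. (1) Writing $A_k = I_a D_k$ for the evident rate-weighting map $D_k \colon \mathbb{R}^\mathscr{C} \to \mathbb{R}^\mathscr{R}$ shows $\operatorname{Im} A_k \subseteq \operatorname{Im} I_a =: \mathcal{I}$, where $\dim \mathcal{I} = n - \ell$ and $Y(\mathcal{I}) = \mathcal{S}$. (2) By the standard theory of weighted graph Laplacians, $\dim \ker A_k = t$, so $\dim \operatorname{Im} A_k = n - t$. (3) The deficiency has the kernel interpretation $\delta = \dim(\ker Y \cap \mathcal{I})$, which follows from $s = \dim Y(\mathcal{I}) = (n-\ell) - \dim(\ker Y \cap \mathcal{I})$ together with $\delta = n - \ell - s$. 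Writing $\delta_k := \dim(\ker Y \cap \operatorname{Im} A_k)$ and noting $0 \le \delta_k \le \delta$ (because $\operatorname{Im} A_k \subseteq \mathcal{I}$), a rank--nullity computation applied to $Y$ on $\operatorname{Im} A_k$ and on $\mathcal{I}$ yields the master identity
\[
\dim \mathcal{S} - \dim \mathcal{K} = (t - \ell) - (\delta - \delta_k).
\]

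With this identity the three cases fall out. For (i), $t - \ell = 0$ forces $\dim \operatorname{Im} A_k = n - \ell = \dim \mathcal{I}$; combined with $\operatorname{Im} A_k \subseteq \mathcal{I}$ this gives $\operatorname{Im} A_k = \mathcal{I}$, whence $\mathcal{K} = Y(\operatorname{Im} A_k) = Y(\mathcal{I}) = \mathcal{S}$. For (ii), since $\delta_k \le \delta$ we have $\dim \mathcal{S} - \dim \mathcal{K} = (t-\ell) - (\delta - \delta_k) \ge (t-\ell) - \delta > 0$, so $\mathcal{K} \subsetneq \mathcal{S}$ for every choice of rate constants. For (iii), when $0 < t - \ell \le \delta$ the sign of the gap hinges entirely on $\delta_k$, which is a genuine function of the rate vector $k$ through the subspace $\operatorname{Im} A_k$: the value $\delta_k = \delta - (t-\ell)$ is admissible and yields $\mathcal{K} = \mathcal{S}$, while larger values give $\mathcal{K} \neq \mathcal{S}$.

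I expect the main obstacle to be case (iii), specifically the claim that $\delta_k$ really varies with $k$ so as to realize both $\mathcal{K} = \mathcal{S}$ and $\mathcal{K} \neq \mathcal{S}$. The dimension identity only shows that neither outcome is excluded; to prove both occur I would need to control how $\ker Y \cap \operatorname{Im} A_k$ moves as $k$ ranges over $\mathbb{R}^\mathscr{R}_{>0}$, either by exhibiting explicit witnessing networks with prescribed rate constants or by a genericity argument showing that $\delta_k$ attains its minimum on a dense open set while larger values occur on a proper subvariety of the parameter space. The secondary point requiring care is the Laplacian kernel fact $\dim \ker A_k = t$ for an arbitrary weighted reaction graph, though this is classical and would be invoked rather than re-derived.
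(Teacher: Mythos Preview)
The paper does not prove this theorem; it is quoted in Section~4.3.1 as a classical result of Feinberg and Horn \cite{FEHO1977} and used only as background for the later KSSC results (Theorems~\ref{theore:ajlm2017} and~\ref{theorem:nazareno}). So there is no in-paper proof to compare against.

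That said, your sketch is the standard Feinberg--Horn argument and is essentially correct. The identification $\mathcal{K} = Y(\operatorname{Im} A_k)$ via linear independence of the monomials $x \mapsto x^y$ is exactly the ``factor span surjectivity'' of mass action that the paper alludes to when it says the conditions CFK and FSK ``were not visible in the result of Feinberg and Horn because all mass action systems possessed them.'' Your master identity $\dim \mathcal{S} - \dim \mathcal{K} = (t-\ell) - (\delta - \delta_k)$ is the right organizing device, and the Laplacian fact $\dim \ker A_k = t$ is indeed classical (it is the content of Feinberg's terminal-strong-linkage-class kernel description). Cases (i) and (ii) then drop out exactly as you say.

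Your self-diagnosed obstacle in (iii) is the genuine one. The dimension identity alone only shows that both outcomes are \emph{numerically possible}; to establish that the answer truly depends on the rate constants you must exhibit, for at least one network satisfying $0 < t-\ell \le \delta$, choices of $k$ realizing both $\mathcal{K}=\mathcal{S}$ and $\mathcal{K}\neq\mathcal{S}$. The original 1977 paper does this by explicit example rather than by an abstract genericity argument, and that is the cleanest route here too: a small network (say four or five complexes with a single non-terminal strong linkage class feeding into two terminals) suffices. Your alternative suggestion of a genericity argument would also work but is more machinery than the statement warrants.
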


A striking feature of this result is that a single network property, the value of the difference ``$t - \ell$", determines the coincidence or non-coincidence of the subspaces. Arceo et al. \cite{AJLM2017} extended the result to the factor span surjective (FSK) subset of complex factorizable kinetic (CFK) systems forty years later. 

\begin{theorem} \label{theore:ajlm2017} (Theorem 3 of \cite{AJLM2017})
For a complex factorizable system on a network $\mathscr{N}$,
\begin{enumerate}
    \item[(i)] if $t-\ell > \delta$, then $\mathcal{K} \neq \mathcal{S}$.
    \item[(i')] if $0<t - \ell \geq \delta$, and a positive steady state exists, then $\mathcal{K} \neq \mathcal{S}$. In fact, $\dim \mathcal{S} - \dim \mathcal{K} \geq t-\ell - \delta +1$. \\
    if the system is also factor space surjective and
    \item[(ii)] if $t - \ell =0$, then $\mathcal{K}=\mathcal{S}$.
    \item[(iii)] if $0<t - \ell \leq \delta$ and a positive steady state does not exist, then $\mathcal{K}=\mathcal{S}$ or $\mathcal{K} \neq \mathcal{S}$ depending on the rate constants.
\end{enumerate}
\end{theorem}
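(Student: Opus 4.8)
The plan is to reduce the whole statement to a dimension count on the three linear maps $Y$, $I_a$, and $A_k$, exploiting the complex-factorizable decomposition $f = Y \circ A_k \circ \Psi_k$. First I would observe that the kinetic subspace $\mathcal{K}$, being the linear span of the image of $f$, satisfies $\mathcal{K} = Y A_k(W)$, where $W := \text{span}\,\{\Psi_k(x) \mid x \in \mathbb{R}^\mathscr{S}_{>0}\}$. For a general complex-factorizable system $W \subseteq \mathbb{R}^\mathscr{C}$, so $\mathcal{K} \subseteq Y(\text{Im}\,A_k)$; under the factor span surjectivity hypothesis $W = \mathbb{R}^\mathscr{C}$, and the inclusion upgrades to the equality $\mathcal{K} = Y(\text{Im}\,A_k)$. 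Since $A_k \omega_y = \sum_{y \to y'} k_{y \to y'}(\omega_{y'} - \omega_y)$ is a combination of the generators $I_a(\omega_{y \to y'})$ of $\text{Im}\,I_a$, one always has $\text{Im}\,A_k \subseteq \text{Im}\,I_a$, and hence $\mathcal{K} \subseteq \mathcal{S} = Y(\text{Im}\,I_a)$; the theorem is then a question of when this inclusion of images is onto after applying $Y$.

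Next I would record the three dimension facts that drive everything. First, $\dim \text{Im}\,I_a = n - \ell$, and applying rank--nullity to $Y$ restricted to $\text{Im}\,I_a$ gives $\dim(\ker Y \cap \text{Im}\,I_a) = (n-\ell) - s = \delta$, so $\dim \mathcal{S} = s$. Second, the Laplacian $A_k$ has $\dim \ker A_k = t$, its kernel being spanned by one vector per terminal strong linkage class, whence $\dim \text{Im}\,A_k = n - t$. This identification of $\dim \ker A_k$ with the number of terminal strong linkage classes is the structural heart of the argument; it rests on the block structure of $A_k$ over the linkage classes together with the tree-constant (Matrix--Tree) description of each terminal block's kernel, and I would invoke it as the key lemma.

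For parts (i) and (i'), because $\mathcal{K} \subseteq Y(\text{Im}\,A_k)$ and $\dim Y(\text{Im}\,A_k) \leq \dim \text{Im}\,A_k = n-t$, I obtain the crude bound $\dim \mathcal{S} - \dim \mathcal{K} \geq (n-\ell-\delta) - (n-t) = t - \ell - \delta$, valid for every complex-factorizable system; when $t - \ell > \delta$ this is at least $1$, so $\mathcal{K} \neq \mathcal{S}$, proving (i). For (i') I would use the positive steady state $x^*$: from $f(x^*) = 0$ the vector $A_k \Psi_k(x^*)$ lies in $\ker Y \cap \text{Im}\,A_k$, and it is nonzero, since otherwise $x^*$ would be complex balanced, forcing weak reversibility by Remark \ref{rem:CB} and hence $t = \ell$, contradicting $t - \ell > 0$. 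Thus $\dim(\ker Y \cap \text{Im}\,A_k) \geq 1$, which drops $\dim Y(\text{Im}\,A_k)$ by one and sharpens the estimate to $\dim \mathcal{S} - \dim \mathcal{K} \geq t - \ell - \delta + 1$.

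Finally, for (ii) and (iii) I would invoke factor span surjectivity to use $\mathcal{K} = Y(\text{Im}\,A_k)$ exactly. For (ii), $t - \ell = 0$ forces $\dim \text{Im}\,A_k = n - t = n - \ell = \dim \text{Im}\,I_a$; combined with $\text{Im}\,A_k \subseteq \text{Im}\,I_a$ this yields $\text{Im}\,A_k = \text{Im}\,I_a$, and therefore $\mathcal{K} = Y(\text{Im}\,A_k) = Y(\text{Im}\,I_a) = \mathcal{S}$. For (iii), with $0 < t - \ell \leq \delta$ and no positive steady state, the pivotal quantity $\dim(\ker Y \cap \text{Im}\,A_k)$ — which decides whether $\dim \mathcal{K} = s$ — is no longer pinned down by the preceding arguments and genuinely varies with the rate vector $k$; I would close this case not by a single computation but by exhibiting, within one fixed network meeting the hypotheses, rate vectors realizing $\mathcal{K} = \mathcal{S}$ and others realizing $\mathcal{K} \neq \mathcal{S}$. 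I expect (iii) to be the main obstacle, since it requires controlling the rate-dependent intersection $\ker Y \cap \text{Im}\,A_k$ rather than merely bounding a dimension.
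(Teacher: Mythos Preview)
The paper does not supply its own proof of this theorem: it is quoted verbatim as Theorem~3 of \cite{AJLM2017} and used as an off-the-shelf tool in Section~\ref{sec4.3}. So there is no ``paper's proof'' to compare against.

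That said, your outline is sound and tracks the standard argument in the Feinberg--Horn and Arceo et~al.\ line of results. The three ingredients you isolate --- $\mathcal{K}\subseteq Y(\mathrm{Im}\,A_k)$ for any complex-factorizable system with equality under factor span surjectivity, $\dim\mathrm{Im}\,I_a=n-\ell$ with $\dim(\ker Y\cap\mathrm{Im}\,I_a)=\delta$, and $\dim\ker A_k=t$ --- are exactly the right ones, and your dimension arithmetic for (i), (i') and (ii) is correct. In (i'), the step that a positive non-complex-balanced equilibrium forces $\dim(\ker Y\cap\mathrm{Im}\,A_k)\geq 1$ is the crux, and your justification via Remark~\ref{rem:CB} (complex balancing $\Rightarrow$ weak reversibility $\Rightarrow$ $t=\ell$) is the intended one. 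Your treatment of (iii) as an existential claim to be discharged by explicit examples is also appropriate; that is how such ``rate-dependent'' statements are established in the source literature. The one place to be careful is the assertion $W=\mathbb{R}^{\mathscr{C}}$ under factor span surjectivity: in the setting of \cite{AJLM2017} the factor map $\Psi_k$ need only be nontrivial on reactant complexes, so FSK literally gives $W\supseteq\mathbb{R}^{\rho(\mathscr{R})}$ rather than all of $\mathbb{R}^{\mathscr{C}}$; this is why the paper remarks that FSK is only available for cycle-terminal networks in the power-law case. Your argument for (ii) still goes through, since $A_k$ annihilates the non-reactant coordinates anyway, but the equality $\mathcal{K}=Y(\mathrm{Im}\,A_k)$ should be routed through that observation rather than through $W=\mathbb{R}^{\mathscr{C}}$.
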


The kinetic conditions CFK and FSK were not visible in the result of Feinberg and Horn because all mass action systems possessed them. A disadvantage of the FSK concept is that for many kinetic systems, e.g. for power law systems, FSK systems exist only for cycle terminal networks, i.e. when each complex is a reactant complex. A concept with broader scope, called \textit{interaction span surjectivity}, was introduced first for NFK systems by Nazareno et al. \cite{NEML2019} in 2019.

Interaction span surjectivity is based on the concept of CF-subsets of a kinetic system. If we represent the kinetics at a reaction $q = y \rightarrow y’$ as 
$$K_q (x) = k_q I_{K,q} (x),$$
where $k_q$ is a positive rate constant and $I_{K,q}$ is the interaction function, then we can partition the set of reactions with reactant complex $y$ into subsets with the same interaction function. Such a subset is called a CF-subset, and the CF-subsets (whose total number we denote with $\bm{N_R}$) form a partition of the set of all reactions of the kinetic system. Each CF-subset is characterized by a pair $(y, I_K)$, where $y$ is the (identical) reactant complex and $I_K$ the (identical) interaction function of all reactions in the set.

\begin{definition}
A kinetics is \textbf{interaction span surjective} if the set of interaction functions $I_K$ of its $N_R$ CF-subsets are linearly independent.
\end{definition}

Nazareno et al. derived the following KSSC Theorem for a class of non-complex factorizable systems (called NF-RIDK systems):

\begin{theorem} \label{theorem:nazareno} (Theorem 3 of \cite{NEML2019})
Consider an NF-RIDK system $(\mathscr{N},K)$. Let $n=$ number of complexes, $n_r=$ number of reactants,  $N_R$  the number of CF-subsets, $r=$ number of reactions, $r_{mcf}=$ number of reactions in a maximal CF-subnetwork, and $s=$ rank of the CRN. 
\begin{enumerate}[label=(\roman*)]
    \item If $N_R < s$, then $\mathcal{K}=\mathcal{S}$. \\
    If the system is also interaction span surjective, then either
    \item $t-\ell =0$ and $r- r_{mcf} = N_R - n_r$ implies $\mathcal{K}=\mathcal{S}$ ; or
    \item  $t - \ell \leq \delta$ and $t=n-n_r$ implies that $\mathcal{K}=\mathcal{S}$ is rate-constant dependent.
\end{enumerate}
\end{theorem}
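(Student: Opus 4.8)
The plan is to reduce all three parts to a single rank comparison between the kinetic subspace $\mathcal{K}$ and the stoichiometric subspace $\mathcal{S}$, read off from the CF-subset decomposition of the system. First I would write each rate as $K_q(x)=k_q I_{K,q}(x)$ and collect the reactions of each of the $N_R$ CF-subsets $\beta$ (those sharing a reactant complex $y_\beta$ and interaction function $I_{K,\beta}$), so that
\begin{equation*}
f(x)=\sum_{\beta=1}^{N_R} I_{K,\beta}(x)\,v_\beta,\qquad v_\beta:=\sum_{q\in\beta}k_q\,(y'_q-y_q)\in\mathcal{S}.
\end{equation*}
This exhibits $\mathcal{K}=\mathrm{span}(\mathrm{Im}\,f)\subseteq\mathcal{S}$ as a subspace controlled jointly by the contribution vectors $\{v_\beta\}$ and by the linear dependence among the interaction functions $\{I_{K,\beta}\}$. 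Interaction span surjectivity is exactly what decouples these two ingredients: when the $I_{K,\beta}$ are linearly independent, $\mathcal{K}=\mathrm{span}\{v_\beta\}$, so coincidence becomes the purely stoichiometric question of whether the $v_\beta$ attain the full rank $s$.

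For part (i), where no surjectivity is assumed, I would instead proceed by a maximal CF-subnetwork reduction: split $\mathscr{N}$ into a maximal complex-factorizable subnetwork (carrying $r_{mcf}$ reactions), on which the Arceo et al. machinery of Theorem \ref{theore:ajlm2017} applies directly, together with the remaining non-factorizable reactions, and then track how these remaining reactions enlarge the kinetic subspace of the CF part. The hypothesis $N_R<s$ would be used to rule out the factor-span deficiency that is the only obstruction to coincidence in this reduction, yielding $\mathcal{K}=\mathcal{S}$. Showing precisely that $N_R<s$ forces that deficiency to vanish is, I expect, the delicate point of part (i).

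For parts (ii) and (iii), interaction span surjectivity gives $\mathcal{K}=\mathrm{span}\{v_\beta\}$, so I would only need to decide when the $v_\beta$ span all of $\mathcal{S}$. In part (ii), I would argue that the t-minimality condition $t-\ell=0$ together with the balance identity $r-r_{mcf}=N_R-n_r$ forces $\{v_\beta\}$ to attain full rank $s$ for \emph{every} choice of rate constants, exactly paralleling the Feinberg--Horn case $t-\ell=0$; this yields the unconditional coincidence $\mathcal{K}=\mathcal{S}$. In part (iii), under $t-\ell\le\delta$ together with point-terminality $t=n-n_r$, the rank of $\{v_\beta\}$ is no longer forced: I would exhibit one rate vector for which the $v_\beta$ span $\mathcal{S}$ and another for which their span drops, thereby showing that coincidence is rate-constant dependent, in direct analogy with part (iii) of Theorem \ref{theore:ajlm2017} and the classical Feinberg--Horn result.

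The main obstacle throughout is the exact dimension bookkeeping that converts the network counts $N_R$, $n_r$, $r$, $r_{mcf}$, $t$, $\ell$, $\delta$ into the codimension $s-\dim\mathcal{K}$. Concretely, verifying in part (ii) that the identity $r-r_{mcf}=N_R-n_r$ is precisely what makes the contribution vectors $\{v_\beta\}$ full rank, and pinning down in part (i) the mechanism by which $N_R<s$ makes the factor-span deficiency vanish, are the steps I expect to require the most care; the rate-constant-dependent case (iii) should then follow by perturbing a full-rank configuration into a degenerate one.
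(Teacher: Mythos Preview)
First, note that the paper does not prove this theorem: it is quoted verbatim from \cite{NEML2019} and only \emph{applied} (to the Schmitz system in Section~\ref{sec4.3}). So there is no proof in the present paper to compare against.

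That said, your outline contains a genuine error in part~(i). Your own decomposition
\[
f(x)=\sum_{\beta=1}^{N_R} I_{K,\beta}(x)\,v_\beta
\]
already shows $\mathcal{K}=\mathrm{span}(\mathrm{Im}\,f)\subseteq\mathrm{span}\{v_\beta\}$, hence $\dim\mathcal{K}\le N_R$. Thus the hypothesis $N_R<s$ forces $\dim\mathcal{K}<s=\dim\mathcal{S}$, i.e.\ $\mathcal{K}\neq\mathcal{S}$, the \emph{opposite} of what you are trying to establish. (Compare the parallel structure of Theorem~\ref{theore:ajlm2017}, where part~(i) is likewise a \emph{non}-coincidence criterion.) The conclusion ``$\mathcal{K}=\mathcal{S}$'' in part~(i) as printed here is almost certainly a typographical slip for ``$\mathcal{K}\neq\mathcal{S}$''; your plan to ``rule out the factor-span deficiency'' and obtain coincidence from $N_R<s$ cannot succeed, and the maximal CF-subnetwork reduction you propose is unnecessary --- the one-line rank bound above is the whole argument.

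Your framework for parts~(ii) and~(iii) is sound: interaction span surjectivity does give $\mathcal{K}=\mathrm{span}\{v_\beta\}$ (linear independence of the $I_{K,\beta}$ lets you invert a suitable evaluation matrix and recover each $v_\beta$ from values of $f$), reducing coincidence to whether the $v_\beta$ span $\mathcal{S}$. What remains is exactly the bookkeeping you flag as the obstacle: showing that $t-\ell=0$ together with $r-r_{mcf}=N_R-n_r$ makes the span full for \emph{all} rate constants, and that under the hypotheses of~(iii) both full and deficient rank occur. Your sketch does not yet supply the mechanism linking those counts to $\dim\mathrm{span}\{v_\beta\}$; that is where the actual work in \cite{NEML2019} lies.
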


Recently, Arceo et al. \cite{AJLM2022} extended the interaction span surjectivity concept to CFK systems and showed that the KSSC Theorem extends from FSK to its superset of interaction span surjective kinetic (ISK) systems. In other words, one can simply replace “factor span surjectivity” with  “interaction span surjectivity” in Theorem \ref{theore:ajlm2017} above. This result provides the basis for a new result in Section \ref{sec5.1}. 

\subsubsection{The KSSC property of any Schmitz system}
To assess subspace coincidence for any Schmitz system, since PL-NDK systems are non-complex factorizable, we first attempted to apply Theorem \ref{theorem:nazareno}  (Nazareno et al.’s KSSC Theorem). Being weakly reversible, it is clearly $t$-minimal (i.e., $t-\ell=0$). The ISK property can be checked for power law systems by the following result of Arceo et al. \cite{AJLM2022}:

\begin{proposition} \label{prop:ISK} (\cite{AJLM2022})
A PLK system $(\mathscr{N}, K)$ with kinetic order matrix $F$ is ISK if and only if the rows in $F$ of any two reactions from different CF-subsets are different.
\end{proposition}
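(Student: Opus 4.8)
The plan is to unwind both the ISK definition and the CF-subset construction in the concrete setting of power law kinetics, where every interaction function is a monomial, and then reduce the linear-independence question to the classical fact that distinct monomials are linearly independent as functions on $\mathbb{R}^\mathscr{S}_{>0}$.

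First I would record that for a PLK system the interaction function of reaction $i$ is simply $I_{K,i}(x) = x^{F_{i,*}}$, so that two reactions carry the same interaction function exactly when $x^{F_{i,*}} = x^{F_{j,*}}$ for all $x \in \mathbb{R}^\mathscr{S}_{>0}$; taking logarithms and using that $\log$ sends $\mathbb{R}^\mathscr{S}_{>0}$ onto a spanning set, this is equivalent to $F_{i,*} = F_{j,*}$. Consequently each CF-subset, being a set of reactions sharing a common reactant complex and a common interaction function, has a single well-defined row of $F$ attached to it; write $r_A$ for this row of the CF-subset $A$. The $N_R$ interaction functions entering the ISK definition are then precisely the $N_R$ monomials $x^{r_A}$, as $A$ ranges over the CF-subsets.

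Next I would isolate the one substantive point as a lemma: a finite family of monomials $x^{v_1}, \dots, x^{v_N}$ with pairwise distinct exponent vectors is linearly independent on $\mathbb{R}^\mathscr{S}_{>0}$, whereas a repeated exponent vector yields an obvious linear dependence. For the nontrivial direction I would restrict to a one-parameter curve $x = (t^{a_S})_{S \in \mathscr{S}}$, so that $x^{v_k} = t^{\langle a, v_k \rangle}$, and choose the direction $a$ outside the finitely many hyperplanes $\{ a \mid \langle a, v_k - v_\ell \rangle = 0 \}$ for $k \neq \ell$, making the real exponents $\langle a, v_k \rangle$ pairwise distinct. Linear independence of single-variable power functions $t \mapsto t^{\lambda_k}$ with distinct real exponents is elementary (e.g.\ by comparing growth as $t \to \infty$, or by successively peeling off leading terms), and this transfers back to the original monomials. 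The converse is immediate, since equal exponent vectors give identical monomials.

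Finally I would assemble the equivalence. By the lemma the family $\{ x^{r_A} \}$ is linearly independent if and only if the rows $r_A$ are pairwise distinct across CF-subsets. Since all reactions inside a fixed CF-subset share one row, pairwise distinctness of the $r_A$ is exactly the assertion that any two reactions drawn from different CF-subsets have different rows of $F$, which is the stated criterion; hence the system is ISK iff the criterion holds. I would also note the one point where care is needed: CF-subsets can be strictly finer than the partition of reactions by rows of $F$, because they additionally split reactions by reactant complex, so two reactions with identical rows but different reactant complexes lie in different CF-subsets and make the system non-ISK. This is precisely why the criterion is phrased in terms of reactions rather than rows alone. The only genuine obstacle is the monomial linear-independence lemma; the remainder is bookkeeping that translates the abstract interaction-function language into the concrete row comparison.
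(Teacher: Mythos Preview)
Your argument is correct, but there is nothing in the paper to compare it against: the proposition is quoted from \cite{AJLM2022} and stated here without proof, so the paper offers no argument of its own. Your reduction of the ISK condition to linear independence of the monomials $x^{r_A}$ attached to the CF-subsets is exactly the right translation, and the one-parameter restriction $x=(t^{a_S})$ with $a$ chosen generically is a clean way to handle the only nontrivial point. Your final remark is also on target and worth emphasizing: because CF-subsets are indexed by pairs (reactant complex, interaction function), two reactions with identical kinetic-order rows but different reactant complexes sit in different CF-subsets and already force a dependence among the $N_R$ interaction functions; this is why the criterion must be read at the level of reactions rather than of distinct rows of $F$.
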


As shown in Example 2 of \cite{NEML2019}, any Schmitz system has $N_R = 9$ CF-subsets and the kinetic order rows of these are pairwise different, hence the system is an ISK system. 

The final requirement concerns the number $r_{mcf}$ of reactions in a maximal CF-subnetwork of the system. Such a subnetwork is defined by the union of all branching reactions of RDK nodes and a CF-subset of each NDK-node with the maximal number of elements. An easy computation shows that $r_{mcf} = 10$, so that $r - r_{mcf}  = 13 - 10 = N_R - n_r = 9 – 6$.  This establishes KSSC for any Schmitz system.

\subsection{A low deficiency complement of a Schmitz system} \label{sec4.4}
In this Section, we are interested in identifying, for a given power law kinetic system $(\mathscr{N}, K)$, weakly reversible PL-RDK systems $(\mathscr{N}^\#, K^\#)$ which have low deficiency (i.e., $\delta = 0$ or = 1) and identical positive equilibria sets, i.e.  $E_+ (\mathscr{N}, K)=E_+(\mathscr{N}^\#, K^\#)$. Much is known about such low deficiency systems, which could be used to understand the given system. Typical examples are linear conjugates with such properties. If $(\mathscr{N}, K)$ itself has low deficiency $\delta$ and $\delta^\# = 1- \delta$ , then we call $(\mathscr{N}^\#, K^\#)$ a \textbf{low deficiency complement (LDC)}.

\subsubsection{Invariance of network properties under linear conjugacy}
Johnston and Siegel \cite{JOSI2011} showed that two kinetic systems (with the same species space) are linearly conjugate if and only if there is a positive vector $c \in \mathbb{R}^\mathscr{S}$ (called a \textit{conjugacy vector}) such that $f_\#(x) = (\text{diag } c)f(x)$, where $f_\#, f$ are the species formation rate functions of the systems.

The following Lemma and Proposition derive the invariance of network conservativity and concordance under linear conjugacy.

\begin{lemma}\label{lem:invar}
Let $(\mathscr{N}, K)$ be a kinetic system and $(\mathscr{N^\#}, K^\#)$ a linear conjugate with the same set of species. Furthermore, assume that both systems have KSSC. Then
\begin{enumerate}[label=(\roman*)]
    \item $\mathcal{S}_\# =(\text{diag }c) \mathcal{S}$ and $\mathcal{S}_\#^\perp =(\text{diag }c^{-1}) \mathcal{S}^\perp$ where $c= [c_1, \cdots, c_m]$ is a positive conjugacy vector.
    \item The isomorphism $\text{diag } c : \mathbb{R}^\mathscr{S} \rightarrow \mathbb{R}^\mathscr{S}$ induces an isomorphism $\mathbb{R}^\mathscr{S} / \mathcal{S} \rightarrow \mathbb{R}^\mathscr{S} / \mathcal{S}_\#$, mapping stoichiometric class to stoichiometric class.
\end{enumerate}
\end{lemma}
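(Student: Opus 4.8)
The plan is to exploit the single fact that transports cleanly under linear conjugacy — the coincidence of \emph{kinetic} subspaces up to the scaling $\text{diag } c$ — and then to convert this into a statement about \emph{stoichiometric} subspaces using the KSSC hypothesis. Recall that the kinetic subspace $\mathcal{K}$ is the span of the image of the species formation rate function $f$. Linear conjugacy gives $f_\#(x) = (\text{diag } c) f(x)$ for all $x$, so $\text{Im } f_\# = (\text{diag } c)(\text{Im } f)$ (because $\text{diag } c$ is a bijection), and since a linear isomorphism commutes with taking spans, $\mathcal{K}_\# = (\text{diag } c)\,\mathcal{K}$. This relation holds with no extra hypothesis; it is the only bridge conjugacy provides.

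First I would invoke KSSC for both systems, namely $\mathcal{K} = \mathcal{S}$ and $\mathcal{K}_\# = \mathcal{S}_\#$. Substituting into the displayed relation yields $\mathcal{S}_\# = (\text{diag } c)\,\mathcal{S}$, the first assertion of (i). For the orthogonal complements I would use the standard identity that, with respect to the standard scalar product, an invertible symmetric (here diagonal) map $A$ satisfies $(A V)^\perp = A^{-1} V^\perp$. Concretely, for $v \in \mathcal{S}^\perp$ and $s \in \mathcal{S}$ one computes $\langle (\text{diag } c^{-1}) v,\, (\text{diag } c) s \rangle = \sum_i c_i^{-1} v_i\, c_i s_i = \langle v, s \rangle = 0$, which shows $(\text{diag } c^{-1})\,\mathcal{S}^\perp \subseteq \mathcal{S}_\#^\perp$. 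Equality follows by dimension count: both subspaces have dimension $m - \dim \mathcal{S}$, since $\text{diag } c$ and $\text{diag } c^{-1}$ are invertible and $\mathcal{S}_\# = (\text{diag } c)\,\mathcal{S}$ has the same dimension as $\mathcal{S}$. This completes (i).

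For part (ii) I would note that $\text{diag } c$ is a linear automorphism of $\mathbb{R}^\mathscr{S}$ carrying $\mathcal{S}$ onto $\mathcal{S}_\#$ by (i), hence it descends to a well-defined map $\overline{\text{diag } c}\colon \mathbb{R}^\mathscr{S}/\mathcal{S} \to \mathbb{R}^\mathscr{S}/\mathcal{S}_\#$ given by $x + \mathcal{S} \mapsto (\text{diag } c) x + \mathcal{S}_\#$. Well-definedness is immediate because $(\text{diag } c)\,\mathcal{S} = \mathcal{S}_\#$, and the map is a bijection whose inverse is induced by $\text{diag } c^{-1}$. Since a coset $x + \mathcal{S}$ is exactly a stoichiometric class of $\mathscr{N}$ and its image $(\text{diag } c)x + \mathcal{S}_\#$ is a stoichiometric class of $\mathscr{N}^\#$, the induced isomorphism maps stoichiometric classes to stoichiometric classes, as claimed.

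The main obstacle is conceptual rather than computational: linear conjugacy says nothing directly about the stoichiometric subspaces, only about the kinetic subspaces through $f_\# = (\text{diag } c) f$. Recognizing that the argument must be routed through $\mathcal{K}$ and $\mathcal{K}_\#$ — and that the KSSC hypothesis is precisely what closes the gap between $\mathcal{K}$ and $\mathcal{S}$ on both sides — is the key step. Once $\mathcal{S}_\# = (\text{diag } c)\,\mathcal{S}$ is established, everything that remains is routine linear algebra.
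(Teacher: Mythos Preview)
Your proof is correct and follows essentially the same route as the paper: both arguments use the relation $f_\# = (\text{diag } c) f$ to obtain $\mathcal{K}_\# = (\text{diag } c)\,\mathcal{K}$, invoke KSSC on both sides to convert this into $\mathcal{S}_\# = (\text{diag } c)\,\mathcal{S}$, verify the orthogonal-complement identity via the same scalar-product computation, and then pass to the quotient for (ii). Your version is in fact slightly more careful, as you make the dimension count for equality of the orthogonal complements explicit, whereas the paper leaves it implicit.
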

\begin{proof}
$ $
\begin{enumerate}[label=(\roman*)]
    \item Since $(\mathscr{N}, K)$ and $(\mathscr{N^\#}, K^\#)$ has KSSC, we have $(\text{diag }c) \mathcal{S} = \text{diag } \langle \text{Im } f \rangle = \langle \text{Im } f_\# \rangle=\mathcal{S}_\#$. This implies that $\mathcal{S}_\#^\perp =(\text{diag }c^{-1}) \mathcal{S}^\perp$ because $0 = \langle x, x' \rangle =\sum x_i x'_i = \sum cx_i c^{-1}x'_i =\langle cx,c^{-1}x'\rangle$.
    \item The map $x+ \mathcal{S} \rightarrow (\text{diag }c)x + \mathcal{S}_\#$ is a well-defined linear map since $\mathcal{S}_\#=(\text{diag }c) \mathcal{S}$. Clearly, its inverse map is given by $(\text{diag }c^{-1})$, showing its bijectivity.
\end{enumerate}
\end{proof}

\begin{proposition}
Let $(\mathscr{N}, K)$ and $(\mathscr{N^\#}, K^\#)$ be as in the previous lemma. Let $r$ and $r_\#$ be the number of reactions of their respective CRNs. Then 
\begin{enumerate}[label=(\roman*)]
    \item $\mathscr{N}$ is conservative $\Rightarrow \mathscr{N^\#}$ is conservative.
    \item If $r=r_\#$, $\mathscr{N}$ is concordant $\Rightarrow \mathscr{N^\#}$ is concordant.
\end{enumerate}
\end{proposition}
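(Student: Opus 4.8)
The plan is to transport the defining witness of each property along the diagonal isomorphism $\text{diag } c$ supplied by Lemma \ref{lem:invar}, handling conservativity directly and concordance by contraposition. For (i), recall that $\mathscr{N}$ conservative means that $\mathcal{S}^\perp$ contains a strictly positive vector $w$. I would set $w_\# := (\text{diag } c^{-1})w$; since $c$ is a positive conjugacy vector, every entry of $c^{-1}$ is positive, so $w_\#$ is again strictly positive, and by Lemma \ref{lem:invar}(i) we have $\mathcal{S}_\#^\perp = (\text{diag } c^{-1})\mathcal{S}^\perp$, whence $w_\# \in \mathcal{S}_\#^\perp$. This produces a strictly positive conservation law for $\mathscr{N^\#}$, proving it conservative. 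Note that this step uses neither $r=r_\#$ nor concordance.

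For (ii) I would argue the contrapositive: assuming $\mathscr{N^\#}$ is discordant, I produce witnesses of discordance for $\mathscr{N}$, contradicting its concordance. So suppose there exist $\alpha_\# \in \text{Ker}(L_\#)$ and a nonzero $\sigma_\# \in \mathcal{S}_\#$ satisfying the two sign conditions. For the stoichiometric witness I set $\sigma := (\text{diag } c^{-1})\sigma_\#$; by Lemma \ref{lem:invar}(i) this lies in $\mathcal{S}$, it is nonzero, and since $\text{diag } c^{-1}$ has strictly positive diagonal we get $\text{sgn}(\sigma_S) = \text{sgn}((\sigma_\#)_S)$ for every species $S$. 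For the reaction witness I invoke $r=r_\#$: the linear conjugacy $f_\# = (\text{diag } c)f$ matches the two species formation rate functions, yielding a bijection $\phi$ between the reactions of $\mathscr{N}$ and those of $\mathscr{N^\#}$ under which corresponding reactant complexes share the same support and the reaction vectors satisfy $v_{\phi(q)}^\# = (\text{diag } c)\, v_q$. Pulling $\alpha_\#$ back along $\phi$ to $\alpha$ gives $L_\#(\alpha_\#) = (\text{diag } c)L(\alpha)$, so $\alpha \in \text{Ker}(L)$, and $\text{sgn}(\alpha_q) = \text{sgn}((\alpha_\#)_{\phi(q)})$. Because the two concordance conditions reference only the signs $\text{sgn}(\sigma_S)$, the signs of the $\alpha$-entries, and the reactant supports $\text{supp}(y)$ — all preserved by $\phi$ together with $\text{diag } c$ — the conditions verified by $(\alpha_\#,\sigma_\#)$ transfer verbatim to $(\alpha,\sigma)$, exhibiting $\mathscr{N}$ as discordant.

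The main obstacle is the structural claim buried in the reaction witness: that $r=r_\#$ together with the vector-field identity $f_\# = (\text{diag } c)f$ actually forces a reaction bijection $\phi$ preserving reactant supports and scaling reaction vectors by $\text{diag } c$. At the level of vector fields the conjugacy sees only the image of $f$, so I expect to recover the reaction-by-reaction correspondence from the power-law structure: writing each side as a sum of rate-constant-weighted monomials $x^{F_{i,*}}$ times reaction vectors and using that distinct monomials are linearly independent functions on $\mathbb{R}^\mathscr{S}_{>0}$, the equality $f_\# = (\text{diag } c)f$ forces the reactant kinetic-order rows (hence the reactant supports) to coincide and the reaction vectors to transform by $\text{diag } c$, while $r=r_\#$ is exactly what guarantees that no reactions are merged or lost, so that the correspondence is a genuine bijection. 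Once this bookkeeping is secured, the sign-and-support transfer of the preceding paragraph is routine.
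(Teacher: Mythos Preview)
Your argument for (i) is correct and is exactly what the paper does.

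For (ii) you take a different route from the paper, and your route has a genuine gap. The paper does \emph{not} work with the sign-vector definition of concordance; instead it invokes the characterization ``$\mathscr{N}$ is concordant $\Leftrightarrow$ every PL-NIK kinetics on $\mathscr{N}$ is injective'' (noted after Theorem~\ref{theorem:shfe2012}). Assuming $\mathscr{N}^\#$ carries a non-injective PL-NIK kinetics $\widetilde{K}(z)=(\text{diag }\widetilde{k})z^{\widetilde{F}}$, the paper substitutes $y=(\text{diag }c)x$, absorbs the factor $c^{\widetilde{F}}$ into new rate constants, and obtains a non-injective PL-NIK on $\mathscr{N}$, contradicting concordance. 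The point of this detour is precisely to avoid having to match the two networks reaction-by-reaction.

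Your approach, by contrast, stakes everything on the existence of a reaction bijection $\phi$ under which reactant supports coincide and reaction vectors transform by $\text{diag }c$. The justification you sketch does not deliver this. Matching monomials in $f_\#=(\text{diag }c)f$ matches \emph{kinetic-order rows} $F_{i,*}$, not reactant complexes $y_i$; the parenthetical ``(hence the reactant supports)'' is the step that fails. In a PLK system the row $F_{i,*}$ need not have the same support as the stoichiometric reactant $y_i$, and concordance is defined purely in terms of $\text{supp}(y)$ --- the kinetics $K$ plays no role. More structurally, linear conjugacy constrains only the vector field $f$, not the network: the paper's own LDC of the Schmitz system (Figure~\ref{fig2}) is linearly conjugate with $r=r_\#=13$, yet has complexes such as $2M_1,2M_2,2M_3$ and reaction vectors that are not diagonal rescalings of the original monomolecular ones. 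So neither the reactant-support claim nor the reaction-vector claim follows from $r=r_\#$ together with $f_\#=(\text{diag }c)f$, and without them the discordance witness $(\alpha_\#,\sigma_\#)$ cannot be pulled back as you describe.
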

\begin{proof}
$ $
\begin{enumerate}[label=(\roman*)]
    \item It follows from Lemma \ref {lem:invar} (i) that if $v$ is a positive vector in $\mathcal{S}^\perp$, then $(\text{diag }c^{-1})v$ is a positive vector in $\mathcal{S}_\#^\perp$.
    \item We use the equivalence that $\mathscr{N}$ is concordant $\Leftrightarrow$ every PL-NIK system on $\mathscr{N}$ is injective (stated in \cite{FAML2020} after M. Feinberg pointed out in an email that this is shown in the proof of Theorem \ref{theorem:shfe2012}). Suppose there is a non-injective PL-NIK kinetics $\widetilde{K} (z) =(\text{diag } \widetilde{k}) z^{\widetilde{F}}$ on $\mathscr{N^\#}$. Then, if $\widetilde{f} = N_\# \widetilde{K}$, there exist $y, y' \in \mathbb{R}^\mathscr{S}_{\geq 0}$ with $y-y' \in \mathcal{S}_\#$ and $\widetilde{f}(y)= \widetilde{f}(y')$. In view of Lemma \ref {lem:invar} (ii), we have $x,x'$ with $y=(\text{diag }c)x$, $y'=(\text{diag }c)x'$, and $x-x' \in \mathcal{S}$. Hence, we have $\widetilde{f}(y) =\widetilde{f}((\text{diag }c)x)=N_\# (\text{diag } \widetilde{k}) c^{\widetilde{F}} x^{\widetilde{F}}$. Since $ c^{\widetilde{F}}$ is an $r$-vector, we can form new rate constants $\text{diag }k= \text{diag }\widetilde{k} c^{\widetilde{F}}$. We obtain
    \begin{align*}
        0 & = \widetilde{f}(y) - \widetilde{f}(y')  \\
        & = (\text{diag } c) [N(\text{diag }k) x^{\widetilde{F}} - N(\text{diag }k) x'^{\widetilde{F}}].
    \end{align*}
    Since $(\text{diag } c)$ is an isomporphism, this implies $N(\text{diag }k) x^{\widetilde{F}} - N(\text{diag }k) x'^{\widetilde{F}}=0$. Since $x – x' \in \mathcal{S}$ and $(\text{diag }k) x^{\widetilde{F}}$ is a PL-NIK on the concordant $\mathscr{N}$, it is injective, so that $x=x'$ and consequently $y=y'$. Therefore, $\mathscr{N}^\#$ is concordant too.
\end{enumerate}
\end{proof}

\subsubsection{New properties of the LDC of a Schmitz system}
\begin{figure}[t]%
\centering
\includegraphics[width=1\textwidth]{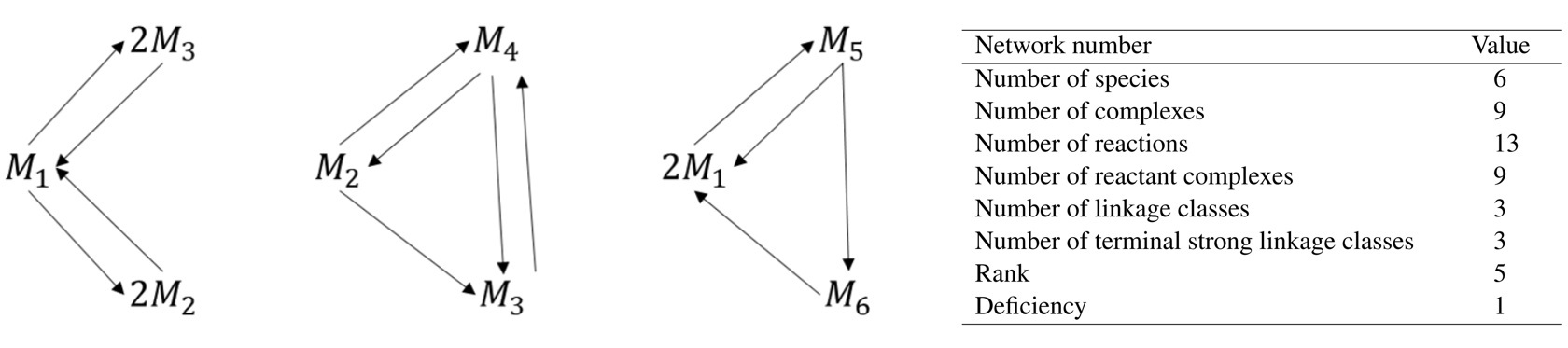}
\caption{The low deficiency complement for the Schmitz system and its network numbers.}\label{fig4}
\end{figure}
Nazareno et al. \cite{NEML2019} constructed the LDC $(\mathscr{N^\#}, K^\#)$  for the Schmitz system shown in Figure \ref{fig4}. They first transformed it to a PL-RDK system via the so-called $\text{CF-RM}_+$ method and then applied Mixed Integer Linear Programming (MILP) techniques to identify a linear conjugate weakly reversible system. Its augmented $T$-matrix below shows that it is a PL-TIK system:

\small{
$$
\widehat{T}= \begin{blockarray}{cccccccccl}
M_1 & 2M_2 & 2M_3 & M_2 & M_3 & M_4 & 2M_1 & M_5 & M_6  \\
\begin{block}{[ccccccccc]l}
1 & 0 & 0 & 0 & 0 & 0 & 0.36 & 0 & 0 & M_1 \\
0 & 9.4 & 0 & 1 & 0 & 0 & 0 & 0 & 0 & M_2  \\
0 & 0 & 10.2 & 0 & 1 & 0 & 0 & 0 & 0 & M_3  \\
0 & 0 & 0 & 0 & 0 & 1 & 0 & 0 & 0 & M_4  \\
0 & 0 & 0 & 0 & 0 & 0 & 0 & 1 & 0 & M_5  \\
0 & 0 & 0 & 0 & 0 & 0 & 0 & 0 & 1 & M_6  \\
1 & 1 & 1 & 0 & 0 & 0 & 0 & 0 & 0 & \mathscr{L}_1  \\
0 & 0 & 0 & 1 & 1 & 1 & 0 & 0 & 0 & \mathscr{L}_2  \\
0 & 0 & 0 & 0 & 0 & 0 & 1 & 1 & 1 & \mathscr{L}_3  \\
\end{block}
\end{blockarray}
$$}

We collect the new results about $(\mathscr{N}_\#,K\#)$ in the following Proposition:

\begin{proposition}
Let $(\mathscr{N^\#}, K^\#)$ be the LDC of the Schmitz system $(\mathscr{N}, K)$. Then
\begin{enumerate}[label=(\roman*)]
    \item its kinetic rank $\widetilde{s}_\#=m=6$, hence its kinetic order space $\widetilde{S}_\#=\mathbb{R}^\mathscr{S}$. 
    \item $Z_+(\mathscr{N^\#}, K^\#)$ contains a single element.
    \item $(\mathscr{N^\#}, K^\#)$ has KSSC.
    \item $(\mathscr{N^\#}, K^\#)$ is not absolutely complex balanced.
\end{enumerate}
\end{proposition}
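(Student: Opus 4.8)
The plan is to dispatch the four assertions in their stated order, extracting (i) and the interaction-span property directly from the displayed augmented $T$-matrix and invoking the complex-balancing theory of PL-TIK systems for the equilibrium counts in (ii) and (iv). For (i), I would read the kinetic rank straight off the $T$-block: its columns labelled $M_1,M_2,M_3,M_4,M_5,M_6$ are precisely the standard unit vectors $e_1,\dots,e_6$ of $\mathbb{R}^\mathscr{S}$, so the $T$-block has rank $6$ and $\widetilde{s}_\# = 6 = m$. Since the kinetic order space $\widetilde{S}_\#$ is a subspace of $\mathbb{R}^\mathscr{S}$ of dimension $\widetilde{s}_\# = \dim\mathbb{R}^\mathscr{S}$, it must coincide with $\mathbb{R}^\mathscr{S}$.

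Claim (ii) is the crux and leans on (i). Since the LDC is weakly reversible, every complex is a reactant complex, so $n = n_r = 9$, while $\ell = 3$ is read off the $L^\top$-block; hence its kinetic deficiency is $n - \ell - \widetilde{s}_\# = 9 - 3 - 6 = 0$. Being a weakly reversible PL-TIK system of kinetic deficiency zero, it is complex balanced, and by the log-parametrization (generalized Horn--Jackson) theory for PL-TIK systems its complex balanced equilibria form $Z_+ = \{\, x \in \mathbb{R}^\mathscr{S}_{>0} \mid \log x - \log x^* \in \widetilde{S}_\#^\perp \,\}$ for any reference $x^* \in Z_+$. By (i), $\widetilde{S}_\# = \mathbb{R}^\mathscr{S}$, so $\widetilde{S}_\#^\perp = \{0\}$ and $\log x = \log x^*$, forcing $x = x^*$; thus $Z_+$ is the single point $\{x^*\}$. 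I expect this to be the main obstacle, since it hinges on invoking the precise existence-plus-parametrization result for PL-TIK systems: the existence half I would justify through the kinetic-deficiency-zero structure, and as a cross-check note that complex balancing is inherited from the Schmitz system under the linear conjugacy $f_\# = (\mathrm{diag}\,c)f$.

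For (iii), weak reversibility gives $t = \ell$, i.e.\ $t - \ell = 0$. The nine $T$-columns are pairwise distinct: the only candidate coincidences, the two $M_1$-columns and the $M_2$ versus $2M_2$ and $M_3$ versus $2M_3$ columns, are separated by their single nonzero entries ($1$ vs.\ $0.36$, $1$ vs.\ $9.4$, $1$ vs.\ $10.2$). Since the system is PL-RDK, two reactions share a CF-subset exactly when they share a reactant complex, so distinct $T$-columns mean reactions from different CF-subsets carry different rows of $F$; Proposition \ref{prop:ISK} then gives that the system is ISK. Being complex factorizable, ISK, and $t$-minimal, the interaction-span extension of Theorem \ref{theore:ajlm2017}(ii) yields $\mathcal{K} = \mathcal{S}$, the KSSC property.

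Finally, for (iv) I would combine (ii) with the equilibria sets. The defining relation $f_\# = (\mathrm{diag}\,c)f$ of a linear conjugate, with $\mathrm{diag}\,c$ invertible, gives $E_+(\mathscr{N}^\#, K^\#) = E_+(\mathscr{N}, K)$; by Proposition \ref{prop:schmitzEquilibria} this set is an entire one-parameter family parametrized by $M_2 \in \mathbb{R}_{>0}$, hence infinite. Since (ii) shows $Z_+(\mathscr{N}^\#, K^\#)$ is a singleton, $Z_+ \subsetneq E_+$, so some positive equilibrium fails to be complex balanced and $(\mathscr{N}^\#, K^\#)$ is not absolutely complex balanced.
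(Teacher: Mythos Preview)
Your arguments for (ii) and (iii) match the paper's essentially verbatim, and your route for (iv) is a legitimate alternative: the paper instead invokes the invariance of conservativity and concordance under linear conjugacy (its Lemma \ref{lem:invar} and the proposition following it) together with the PL-NIK property of $K^\#$ to apply the Shinar--Feinberg Positive Equilibria Theorem, concluding that $E_+(\mathscr{N}^\#,K^\#)$ meets every stoichiometric class in exactly one point and is therefore infinite. Your approach---using $E_+(\mathscr{N}^\#,K^\#)=E_+(\mathscr{N},K)$ directly from $f_\#=(\mathrm{diag}\,c)f$ and then the explicit parametrization of Proposition \ref{prop:schmitzEquilibria}---is shorter and avoids the network-invariance machinery, at the cost of relying on the somewhat delicate parametrization result rather than the clean structural statement ``one equilibrium per class''.

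There is, however, a genuine gap in your argument for (i). The kinetic rank $\widetilde{s}_\#$ is by definition the dimension of the kinetic-order subspace $\widetilde{S}_\#=\mathrm{span}\{\tilde y'-\tilde y: y\to y'\in\mathscr{R}_\#\}$, i.e.\ the span of \emph{differences} of $T$-columns along reactions, not the column space of $T$ itself. Observing that six of the nine $T$-columns are the standard basis vectors tells you only that $\mathrm{rank}(T)=6$, which gives $\widetilde{s}_\#\le 6$ but not equality; in general $\widetilde{s}_\#$ can be strictly smaller than $\mathrm{rank}(T)$. The paper closes this gap by a different route: since the system is PL-TIK it is complex balanced for all rate constants (Talabis et al.), whence by M\"uller--Regensburger its kinetic deficiency vanishes, and then $\widetilde{s}_\#=n_\#-\ell_\#=9-3=6$. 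You could also repair your argument directly by checking that the six difference vectors arising from the three linkage classes are linearly independent, but as written the step ``$\mathrm{rank}(T)=6$ so $\widetilde{s}_\#=6$'' is not justified.
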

\begin{proof}
$ $
\begin{enumerate}[label=(\roman*)]
    \item Since $(\mathscr{N^\#}, K^\#)$ is PL-TIK, it follows from Talabis et al. \cite{TAMJ2019} that it is complex balanced for all rate constants. Hence, after M\"{u}ller and Regensburger \cite{MURE2014}, its kinetic deficiency $\delta_\#=0$. Since $n_\# = N_R =9$ and $\ell_\#=3$, we get $\widetilde{s}_\#=9-3=6=m$. 
    \item The system is a complex balanced log-parametrized (CLP) system (according to the definition of Jose et al. \cite{JOMT2022} recalled in Definition \ref{def:LP}) with flux space $P_Z=\widetilde{S}_\#$. This implies, after \cite{MURE2014}, that $Z_+(\mathscr{N^\#}, K^\#)$ contains a single element.
    \item $(\mathscr{N^\#}, K^\#)$ is weakly reversible and $t$-minimal. The $T$-matrix, i.e., the submatrix of the first six rows of the matrix above, has pairwise different columns, so that the LDC is ISK (Proposition \ref{prop:ISK}). By Theorem \ref{theorem:nazareno}, it has KSSC. 
    \item After Sections \ref{sec4.1}, \ref{sec4.3}, and (iii), it follows that the LDC is weakly reversible, conservative and concordant with a weakly monotonic kinetics, so that its set of positive equilibria has a unique element in each stoichiometric class (and hence, infinite) while there is just a single complex-balanced equilibrium. Therefore, the system is not absolutely complex balanced.
\end{enumerate}
\end{proof}

\subsection{Exponential stability of equilibria} \label{sec4.5}

We recall the following definition from Meshkat et al. \cite{MESH2021}. 
\begin{definition}
A steady state $x$ is \textbf{nondegenerate} if $\text{Im } (J_x (f) \mid_\mathcal{S})=\mathcal{S}$, where $J_x (f)$ is the Jacobian matrix of $f$ at $x$. A nondegenerate steady state is \textbf{exponentially stable} (or simply, \textbf{stable}) if each of the $\dim (\mathcal{S})$ nonzero eigenvalues of $J_x (f)$ has a negative real part. If one of these eigenvalues has positive real part, then $x$ is \textbf{unstable}.
\end{definition}
It was shown in \cite{FLRM2019} that the Schmitz system has at least as many non-degenerate positive equilibria as its subnetwork $\mathscr{N}$ in Figure \ref{fig3}. The symbolic computation of the Jacobian matrix of the system at a positive steady and its eigenvalues is implemented in Maple. In particular, for the following parameters and steady state value (taken from \cite{FLRM2019,SCHM2002}):
\begin{equation*}
\begin{aligned}
k_{12} &= 0.0931 \\
k_{13} &= 0.0311 \\
k_{15} &= 10.08896 \\
k_{21} &= 58 \cdot 730^{-9.4} \\
k_{23} &= 0.0781 \\
k_{24} &= 0.0164 \\
\end{aligned}
\qquad \qquad
\begin{aligned}
k_{31} &= 18 \cdot 140^{-10.2} \\
k_{34} &= 0.714 \\
k_{42} &= 0.00189 \\
k_{43} &= 0.00114 \\
k_{51} &= 0.0862 \\
k_{56} &= 0.0862 \\
k_{61} &= 0.0333
\end{aligned}
\qquad \qquad
\begin{aligned}
M_1 &= 612 \\
M_2 &= 730 \\
M_3 &= 140.424 \\
M_4 &= 37041.164 \\
M_5 &= 579.080 \\
M_6 &= 1499,
\end{aligned}
\end{equation*}

\noindent we obtain 5 (which is equal to $\dim (\mathcal{S})$) non-zero eigenvalues have negative real parts:
$$    \lambda = -2.08129,-0.94080,-0.19917, -0.06698,- 0.00825. $$
Hence, the stability of the steady state is confirmed.

\subsection{Properties of a Schmitz kinetic system: an overview} \label{sec4.6}

Table \ref{table:schmitz} provides an overview of the salient network and kinetic properties of the Schmitz systems. In the References column, the last entry indicates the publication containing the result, the preceding ones are sources for related concepts and results.

\begin{table}[!ht]
\begin{center}
\begin{minipage}{\textwidth}
\caption{Overview of network and kinetic properties of the Schmitz model.}
\label{table:schmitz}
\begin{tabular}{|c|l|c|}
\hline
\textbf{Property class} & \textbf{Schmitz system} & \textbf{References} \\ 
\hline
\multirow{3}{*}{Network}    &   connected   &   \cite{FLRM2019} \\
\hhline{~--}  & weakly reversible & \cite{FLRM2019} \\
\hhline{~--} (Digraph)& cycle terminal &    \cite{FLRM2019} \\
\hhline{~--} &  $t$-minimal &   \cite{FLRM2019} \\
\hline
\multirow{8}{*}{Network} & concordant   & \cite{SHFE2012}, this paper   \\
\hhline{~--} & conservative & this paper  \\
\hhline{~--} & monomolecular & \cite{MJS2018} \\
\hhline{~--} &  zero deficiency & \cite{FLRM2019} \\
\hhline{~--} (Stochiometry-related) &  positive dependent & \cite{FLRM2019} \\
\hhline{~--} &  stoichiometric subspace is hyperplane & \cite{FLRM2019} \\
\hhline{~--} &  (trivially) ILC & \cite{MJS2018} \\
\hhline{~--} &  finest independent decomposition =  & \multirow{2}{*}{\cite{HEAM2022}, this paper} \\
\hhline{~~~} &  finest incidence independent decomposition & \\
\hline
\multirow{16}{*}{ Kinetic system}  & PL-NDK &  \cite{FLRM2019}  \\
\hhline{~--} & PL-NIK & \cite{FLRM2019}  \\
\hhline{~--} & PL-FSK = PL-ISK & \cite{ AJLM2022}, this paper \\
\hhline{~--} & KSSC & \cite{ AJLM2022}, this paper \\
\hhline{~--} &  has CF-decomposition & \cite{FLRM2019} \\
\hhline{~--} &  unique positive equilibrium in each & \multirow{2}{*}{\cite{SHFE2012}, this paper} \\
\hhline{~~~} &   stoichiometric class ($\Rightarrow$ monostationary) & \\
\hhline{~--} & co-monostationary & This paper  \\
\hhline{~--} & absolutely complex balanced & \cite{FEIN1972}  \\
\hhline{~--} & not positive equilibria  & \multirow{2}{*}{\cite{LLMM2022}, this paper} \\
\hhline{~~~} & log-parametrized (not PLP) & \\
\hhline{~--} & not complex balanced equilibria  & \multirow{2}{*}{\cite{JOMT2022}, this paper} \\
\hhline{~~~} & log-parametrized (not CLP) & \\
\hhline{~--} & no ACR species & \cite{FOME2021}, this paper  \\
\hhline{~--} & LDC exists & \cite{NEML2019}  \\
\hhline{~--} & non-ACB LDC & \cite{JOMT2022} , this paper \\
\hhline{~--} & exponential stability of equilibria & This paper \\
\hline
\end{tabular}
\end{minipage}
\end{center}
\end{table}  

\section{Reaction network analysis of Anderies systems}\label{sec5}
As discussed in Section \ref{sec3}, the variation of the human intervention parameter in the Anderies et al. model led in the initial studies (\cite{DOA2018,FLRM2021}) to different families of kinetic representations as power law systems, which we collectively call \textbf{Anderies systems}. Each family is characterized by its common interaction function as specified by a kinetic order matrix. Two families, whose kinetic order matrices are given in Section \ref{sec3},  have been studied so far: members of the first family were shown in \cite{DOA2018} to be multistationary, while those of the latter displayed ACR in at least one species (\cite{FLRM2021}). In this Section, we extend these initial findings to a complete classification of Anderies systems based on their characterization as PLP-systems with a common flux space. We present new results on the multiplicity and co-multiplicity of their positive equilibria (relative to stoichiometric classes). The main tools used are the KSSC property and the availability of an LDC of any Anderies system.

\subsection{The KSSC property of Anderies systems}\label{sec5.1}

Since the underlying network of any Anderies system is $t$-minimal and the kinetics is interaction span surjective, the extended KSSC Theorem implies the coincidence of the kinetic and stoichiometric subspaces. A very important consequence of this property is that, for any  $(\mathscr{N}, K)$ with dynamically equivalent system $(\mathscr{N}^\#, K^\#)$, one has $\mathcal{S} = \mathcal{S}_\#$. As we will see in the succeeding sections, this allows the inference of important properties for the original system.

\subsection{Availability of a weakly reversible deficiency zero LDC}\label{sec5.2}

The LDC of an Anderies system is the following dynamically equivalent system first observed by D. Talabis in January 2019 (to date, unpublished):

$$
\left.
  \begin{array}{rcl}
A_1 + 2A_2 &\rightleftarrows &2A_1 + A_2 \\
A_2 &\rightleftarrows & A_3 \\
  \end{array}
 \right.
$$
Since $2A_1 + A_2=(A_1+A_2)+A_1 \rightarrow 2A_2 + A_1$, the second reactions have the same reaction vector. Being weakly reversible, it is $t$-minimal and hence, the KSSC property. Significantly, it has zero deficiency, and hence is absolutely complex balanced.

\subsection{The structure of the set of positive equilibria of Anderies systems} \label{sec5.3}
In this Section, we exploit the LDC to infer properties of Anderies systems. We briefly review concepts and results on log-parameterized (LP) systems before showing that any Anderies system is a PLP system. This allows the classification of Anderies systems via their 1-dimensional parameter subspace and the inference of multiplicity properties for each class.  We conclude by briefly discussing the finest independent decomposition of an Anderies system.

\subsubsection{Properties of LP systems}

F. Horn and R. Jackson \cite{HORNJACK1972} pioneered the study of LP systems in 1972 through their work on thermostatic mass action systems,  whose flux space is the stoichiometric subspace. The basic concepts are:
\begin{definition}\label{def:LP}
A kinetic system $(\mathscr{N},K)$ is of type 
\begin{enumerate}[label=(\roman*)]
    \item \textbf{PLP (positive equilibria log-parameterized)} if $E_+(\mathscr{N},K) \neq \emptyset$ and  $E_+(\mathscr{N},K) = \{ x\in \mathbb{R}^\mathscr{S}_{>0} \mid \log x - \log x^* \in (P_E)^\perp \}$, where $P_E$ is a subspace of $\mathbb{R}^\mathscr{S}$ and $x^*$ is a positive equilibrium.
    \item \textbf{CLP (complex-balanced equilibria log-parameterized)} if $Z_+(\mathscr{N},K) \neq \emptyset$ and $Z_+(\mathscr{N},K) = \{ x\in \mathbb{R}^\mathscr{S}_{>0} \mid \log x - \log x^* \in (P_Z)^\perp \}$, where $P_Z$ is a subspace of $\mathbb{R}^\mathscr{S}$ and $x^*$ is a complex-balanced equilibrium.
    \item \textbf{bi-LP} if it is of PLP and of CLP type, and $P_E=P_Z$. 
\end{enumerate}
$P_E$ and $P_Z$ are called \textbf{flux subspaces} of the system. 
\end{definition}
 The following proposition from \cite{JOMT2022} justifies the name ``parameter space" for their corresponding orthogonal complements:
\begin{proposition} (Proposition 3 of \cite{JOMT2022})
If $(\mathscr{N},K)$ is a chemical kinetic system of type PLP with flux subspace $P_E$ and reference point $x^* \in E_+(\mathscr{N},K)$, then the map $L_{x^*} : E_+(\mathscr{N},K) \rightarrow (P_E)^\perp$ given by $ L_{x^*} (x) = \log x - \log x^*$ is a bijection.
\end{proposition}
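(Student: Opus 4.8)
The plan is to observe that $L_{x^*}$ is nothing more than the componentwise logarithm $\log : \mathbb{R}^\mathscr{S}_{>0} \to \mathbb{R}^\mathscr{S}$ followed by translation by the fixed vector $-\log x^*$. Since $\log$ is a bijection of the positive orthant onto all of $\mathbb{R}^\mathscr{S}$ (its inverse being the componentwise exponential) and translation is a bijection of $\mathbb{R}^\mathscr{S}$, the map $x \mapsto \log x - \log x^*$ is already a bijection from $\mathbb{R}^\mathscr{S}_{>0}$ onto $\mathbb{R}^\mathscr{S}$. The entire content of the proposition therefore lies in checking that, once the domain is restricted to $E_+(\mathscr{N},K)$, the image is exactly $(P_E)^\perp$; and this is precisely what the defining set equality of a PLP system encodes.

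First I would verify that $L_{x^*}$ is well defined as a map into $(P_E)^\perp$: for any $x \in E_+(\mathscr{N},K)$, the PLP characterization in Definition \ref{def:LP} gives $\log x - \log x^* \in (P_E)^\perp$ directly, so $L_{x^*}$ does land in the claimed codomain. Injectivity is then immediate from the injectivity of $\log$ on the positive orthant: if $L_{x^*}(x) = L_{x^*}(y)$, then $\log x = \log y$, whence $x = y$.

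For surjectivity, given an arbitrary $v \in (P_E)^\perp$, I would take the unique $x \in \mathbb{R}^\mathscr{S}_{>0}$ with $\log x = \log x^* + v$, which exists because $\log$ is onto $\mathbb{R}^\mathscr{S}$. Then $\log x - \log x^* = v \in (P_E)^\perp$, so the PLP set equality forces $x \in E_+(\mathscr{N},K)$, and by construction $L_{x^*}(x) = v$. This exhibits a preimage of every element of $(P_E)^\perp$, completing the three verifications.

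The only real subtlety, and it is conceptual rather than technical, is to recognize that the proposition is essentially a repackaging of the PLP definition: all of its force is already contained in the set-theoretic description of $E_+(\mathscr{N},K)$, and the bijectivity reduces to the elementary fact that the logarithm is a bijection of $\mathbb{R}^\mathscr{S}_{>0}$ composed with a translation. I therefore expect no genuine obstacle; the argument is a routine check of well-definedness, injectivity, and surjectivity, with the PLP characterization doing all the work at the two places where membership in $E_+(\mathscr{N},K)$ or in $(P_E)^\perp$ must be invoked.
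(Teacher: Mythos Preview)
Your proof is correct. The paper does not actually supply its own proof of this proposition; it merely quotes the result from \cite{JOMT2022} and uses it, so there is nothing in the present paper to compare your argument against. Your verification of well-definedness, injectivity, and surjectivity is the natural and essentially unique way to establish the claim, since, as you observe, the bijectivity is an immediate reformulation of the set equality in the PLP definition combined with the fact that the componentwise logarithm is a bijection $\mathbb{R}^\mathscr{S}_{>0}\to\mathbb{R}^\mathscr{S}$.
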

\noindent An analogous result holds for CLP systems. The final theorem from  \cite{JOMT2022} that we will use is:

\begin{theorem} (Theorem 4 of \cite{JOMT2022}) \label{thm:CLP}
Let $(\mathscr{N},K)$ be a CLP system with flux subspace $P_Z$ and reference point $x^* \in Z_+(\mathscr{N},K)$. Then $(\mathscr{N},K)$ is absolutely complex balanced if and only if $(\mathscr{N},K)$ is a bi-LP system. 
\end{theorem}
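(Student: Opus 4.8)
The plan is to translate both sides of the equivalence into statements about the equilibrium sets $E_+(\mathscr{N},K)$ and $Z_+(\mathscr{N},K)$ and then exploit the inclusion $Z_+(\mathscr{N},K) \subseteq E_+(\mathscr{N},K)$ that holds for every chemical kinetic system (a complex-balanced equilibrium is in particular an equilibrium, since $f = Y I_a K$, so $I_a K(x)=0$ forces $f(x)=0$). Under this reformulation, absolute complex balancing says exactly that every positive equilibrium is complex balanced, i.e. $E_+ \subseteq Z_+$, and hence---combined with the automatic reverse inclusion---that $E_+ = Z_+$. Likewise, by Definition \ref{def:LP}, the bi-LP property says the system is simultaneously PLP and CLP and that its two flux subspaces coincide, $P_E = P_Z$. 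So the whole theorem reduces to showing: given that the system is CLP, one has $E_+ = Z_+$ if and only if it is also PLP with $P_E = P_Z$.

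The one lemma I would isolate first is a reference-point invariance statement for log-parametrized sets: for a fixed subspace $W \subseteq \mathbb{R}^\mathscr{S}$, a set of the form $\{x \in \mathbb{R}^\mathscr{S}_{>0} \mid \log x - \log x^* \in W\}$ does not depend on which of its elements is chosen as $x^*$. Indeed, if $\tilde{x}$ belongs to the set then $\log \tilde{x} - \log x^* \in W$, and since $W$ is a subspace the identity $\log x - \log \tilde{x} = (\log x - \log x^*) - (\log \tilde{x} - \log x^*)$ shows that $\log x - \log x^* \in W$ if and only if $\log x - \log \tilde{x} \in W$. With this in hand the two directions are short. For the forward direction, assume $E_+ = Z_+$; since the system is CLP we may write $Z_+ = \{x \mid \log x - \log x^* \in (P_Z)^\perp\}$ with $x^*$ complex balanced, and substituting $E_+$ for $Z_+$ exhibits $E_+$ in exactly the PLP form with flux subspace $P_E = P_Z$ and $E_+ \neq \emptyset$, so the system is bi-LP. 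For the converse, assume bi-LP: the system is PLP with flux subspace $P_E$ and CLP with flux subspace $P_Z = P_E$. Pick a complex-balanced equilibrium $x^*$; then $x^* \in Z_+ \subseteq E_+$, so by the invariance lemma both sets can be described using the \emph{same} reference point $x^*$ and the same subspace $(P_E)^\perp = (P_Z)^\perp$, whence $E_+ = Z_+$ and the system is ACB.

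The argument is essentially bookkeeping once the definitions are unwound, so I do not expect a deep obstacle; the only points demanding care are (a) verifying that a common reference point may be used in the converse direction---this is precisely what the invariance lemma supplies, and it is needed because the PLP and CLP definitions a priori furnish possibly different reference points---and (b) tracking the non-emptiness clauses ($E_+ \neq \emptyset$ in the PLP conclusion, which follows from $Z_+ \neq \emptyset$ together with $E_+ = Z_+$). I would also double-check that the bi-LP condition is genuinely $P_E = P_Z$ as subspaces and not merely an identity of orthogonal complements under some convention, but since $W \mapsto W^\perp$ is a bijection these are equivalent, so no difficulty arises there.
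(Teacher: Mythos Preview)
The paper does not supply its own proof of this theorem: it is quoted verbatim as Theorem 4 of \cite{JOMT2022} and used as a black box in Section \ref{sec5.3}. So there is no in-paper argument to compare against.

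That said, your proof is correct. The reduction to showing $E_+ = Z_+ \Leftrightarrow$ (PLP with $P_E = P_Z$) is exactly the right reformulation, and your reference-point invariance lemma is the one nontrivial ingredient needed to make the converse direction go through cleanly (so that the a priori different reference points furnished by the PLP and CLP hypotheses can be replaced by a common $x^* \in Z_+ \subseteq E_+$). Your bookkeeping checks on non-emptiness and on the equivalence $P_E = P_Z \Leftrightarrow (P_E)^\perp = (P_Z)^\perp$ are also in order. This is presumably very close to the argument in \cite{JOMT2022}, since---as you observe---once the definitions are unpacked there is essentially only one thing to do.
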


\subsubsection{Any Anderies system is a PLP system}

The following general proposition is the basis of our result:

\begin{proposition}
If a PLK system $(\mathscr{N},K)$ with $E_+(\mathscr{N},K)\neq \emptyset$ is dynamically equivalent to a deficiency zero PL-RDK system $(\mathscr{N}^\#, K^\#)$, then it is PLP with $P_E =\widetilde{\mathcal{S}}_\#$. 
\end{proposition}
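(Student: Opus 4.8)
The plan is to move the log-parametrized structure of the equilibrium set from the deficiency zero partner $(\mathscr{N}^\#, K^\#)$ back onto $(\mathscr{N}, K)$ using the dynamic equivalence. The crucial first reduction is that dynamic equivalence means the two species formation rate functions coincide, $f = f_\#$, and hence the two systems have literally the same set of positive equilibria, $E_+(\mathscr{N}, K) = E_+(\mathscr{N}^\#, K^\#)$. So it suffices to describe $E_+(\mathscr{N}^\#, K^\#)$, and by hypothesis this set is nonempty. Next I would argue that, for the deficiency zero system, every positive equilibrium is in fact complex balanced. A positive equilibrium $x$ satisfies $N_\# K^\#(x) = Y_\# I_{a,\#} K^\#(x) = 0$, so $I_{a,\#} K^\#(x)$ lies in $\ker Y_\# \cap \mathrm{Im}\, I_{a,\#}$; since the deficiency equals the dimension of this intersection and is zero by hypothesis, we obtain $I_{a,\#} K^\#(x) = 0$, i.e. $x \in Z_+(\mathscr{N}^\#, K^\#)$ (this is exactly the content of Remark \ref{rem:CB}). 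Therefore $E_+(\mathscr{N}^\#, K^\#) = Z_+(\mathscr{N}^\#, K^\#)$, and this common set is nonempty.

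With the equilibria identified as complex-balanced equilibria, I would invoke the log-parametrization of complex-balanced equilibria for complex factorizable (PL-RDK) systems, as encoded in the CLP notion of Definition \ref{def:LP} and in the work of M\"{u}ller and Regensburger: a complex balanced PL-RDK system with complex-balanced reference point $x^*$ is a CLP system whose set of complex-balanced equilibria is precisely $\{x \in \mathbb{R}^\mathscr{S}_{>0} \mid \log x - \log x^* \in (\widetilde{\mathcal{S}}_\#)^\perp\}$, with flux subspace $P_Z = \widetilde{\mathcal{S}}_\#$ equal to the kinetic-order subspace of $\mathscr{N}^\#$ (the analogue of the stoichiometric subspace built from the kinetic-order vectors, $\mathrm{Im}(\widetilde{Y}_\# I_{a,\#})$). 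Here the PL-RDK hypothesis is indispensable: it supplies the complex-factorizable decomposition $f_\# = Y_\# \circ A_k \circ \Psi_k$ with a single monomial attached to each reactant complex, and it is exactly this monomial structure that forces the equilibrium set into the log-linear form orthogonal to the kinetic-order subspace.

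Finally I would chain the three identifications. Choosing any $x^* \in E_+(\mathscr{N}, K)$ as the reference point (legitimate, since $E_+(\mathscr{N}, K) = E_+(\mathscr{N}^\#, K^\#) = Z_+(\mathscr{N}^\#, K^\#)$), one reads off
$$E_+(\mathscr{N}, K) = Z_+(\mathscr{N}^\#, K^\#) = \{x \in \mathbb{R}^\mathscr{S}_{>0} \mid \log x - \log x^* \in (\widetilde{\mathcal{S}}_\#)^\perp\},$$
which, together with the nonemptiness of $E_+(\mathscr{N}, K)$, is exactly the statement that $(\mathscr{N}, K)$ is PLP with $P_E = \widetilde{\mathcal{S}}_\#$. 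I expect the main obstacle to lie in the second paragraph: correctly establishing (or cleanly citing) the M\"{u}ller--Regensburger / CLP log-parametrization for complex-balanced power law systems, and verifying that the relevant flux subspace is the full kinetic-order subspace $\widetilde{\mathcal{S}}_\#$ rather than some proper subspace of it. The deficiency zero hypothesis is the lever I would use to exclude such degeneracy and secure the subspace exactly.
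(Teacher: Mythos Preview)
Your proposal is correct and follows essentially the same route as the paper: dynamic equivalence transfers $E_+$, zero deficiency gives absolute complex balancing so $E_+(\mathscr{N}^\#,K^\#)=Z_+(\mathscr{N}^\#,K^\#)$, and the M\"{u}ller--Regensburger CLP parametrization of $Z_+$ with flux space $\widetilde{\mathcal{S}}_\#$ then yields the PLP description of $E_+(\mathscr{N},K)$. The paper packages the final step as an application of Theorem~\ref{thm:CLP} (CLP $+$ ACB $\Leftrightarrow$ bi-LP), whereas you unpack that step directly via $E_+=Z_+$; this is a cosmetic difference, not a substantive one.
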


\begin{proof}
In view of dynamic equivalence, $E_+(\mathscr{N}^\#, K^\#)= E_+(\mathscr{N},K)\neq \emptyset$. By Feinberg's ACB Theorem (see Remark \ref{rem:CB}), $(\mathscr{N}^\#, K^\#)$ is absolutely complex balanced. By Prop. 2.21 of M\"{u}ller and Regensburger \cite{MURE2012}, it is a CLP system. Applying Theorem \ref{thm:CLP}, we obtain the claim.
\end{proof}
Any Anderies system satisfies the assumptions of the proposition with $(\mathscr{N}^\#, K^\#)$ given by its LDC. A calculation of the 1-dimensional parameter space $(\widetilde{\mathcal{S}}_\#)^\perp$  provides the following basis:
\begin{equation}\label{eq:basis}
\{ v \}= \left\lbrace
\left( 
-1, \frac{p_2-p_1}{q_2-q_1}, \frac{p_2-p_1}{q_2-q_1}
\right)
\right\rbrace.
\end{equation}
We denote the ratio $\frac{p_2-p_1}{q_2-q_1}$ with $R$.

We note that Anderies systems, like Schmitz systems, are conservative, closed systems of maximal rank ($s = 2, m = 3$). As a consequence, equilibria multiplicity is closely related to ACR properties. For the analysis of this relationship in the Section \ref{sec5.4}, we use the values of $R$ to introduce 3 classes of Anderies systems:

\begin{definition}
The set of Anderies systems with $R > 0$ $(R < 0)$ is denoted by $\bm{\textsf{AND}_>}$  $(\bm{\textsf{AND}_<})$. The set of Anderies systems with $R = 0$ is denoted by $\bm{\textsf{AND}_0}$.
\end{definition}

\subsubsection{The finest independent decomposition of an Anderies system}
For any independent decomposition, its length is less than the network's rank, implying that the finest such decomposition has at most 2 subnetworks. This is clearly given by the subnetwork $\mathscr{L}_1 \cup \mathscr{L}_2$ and the subnetwork $\mathscr{L}_3$, where $\mathscr{L}_1 =\{ A_1 +2A_2 \rightarrow 2A_1 + A_2\}$, $\mathscr{L}_2= \{ A_1+A_2 \rightarrow 2A_2 \}$, and $\mathscr{L}_3= \{ A_2 \leftrightarrows A_3 \}$.

\subsection{ACR analysis of Anderies systems}\label{sec5.4}
A big advantage of ACR analysis in LP systems is the availability of a necessary and sufficient condition for the property through the species hyperplance criterion (SHC).

\subsubsection{ACR analysis in LP systems}
The species hyperplane criterion for PLP systems in \cite{LLMM2022} states:

\begin{theorem} (Theorem 3.12, \cite{LLMM2022}) 
If $(\mathscr{N},K)$ is a PLP system, then it has ACR is a species $S$ is and only if its parameter subspace $(P_E)^\perp$ is a subspace of the hyperplane $\{ x \in \mathbb{R}^\mathscr{S} \mid x_S = 0 \}$.
\end{theorem}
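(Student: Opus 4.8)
The plan is to exploit the logarithmic parametrization that defines a PLP system, which converts the multiplicative structure of the positive-equilibria set into the linear condition $\log x - \log x^* \in (P_E)^\perp$. Under this translation, ACR in a species $S$ --- meaning $x_S = x^*_S$ for every $x \in E_+(\mathscr{N},K)$ --- becomes the requirement that the $S$-coordinate of $\log x - \log x^*$ vanish for every equilibrium, while the containment $(P_E)^\perp \subseteq \{x \in \mathbb{R}^\mathscr{S} \mid x_S = 0\}$ becomes the requirement that the $S$-coordinate of every vector in $(P_E)^\perp$ vanish. I would prove both directions of the biconditional by passing through these equivalent coordinate conditions, using the bijection $L_{x^*} : E_+(\mathscr{N},K) \rightarrow (P_E)^\perp$ given by $L_{x^*}(x) = \log x - \log x^*$ supplied by Proposition 3 of \cite{JOMT2022}.

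For the backward direction, suppose $(P_E)^\perp \subseteq \{x \in \mathbb{R}^\mathscr{S} \mid x_S = 0\}$. I would take an arbitrary $x \in E_+(\mathscr{N},K)$; the PLP defining equation gives $\log x - \log x^* \in (P_E)^\perp$, so by hypothesis its $S$-coordinate is zero, i.e. $\log x_S - \log x^*_S = 0$ and hence $x_S = x^*_S$. Since this holds for every positive equilibrium, the value of $S$ is constant across $E_+(\mathscr{N},K)$, which is exactly ACR in $S$.

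For the forward direction, suppose the system has ACR in $S$ and fix any $v \in (P_E)^\perp$. The essential step is to realize $v$ as the log-difference of an actual equilibrium: because $L_{x^*}$ is onto $(P_E)^\perp$, the point $x$ with $\log x = \log x^* + v$ lies in $E_+(\mathscr{N},K)$ by the PLP parametrization, and satisfies $\log x - \log x^* = v$. ACR then forces $x_S = x^*_S$, so $v_S = \log x_S - \log x^*_S = 0$. As $v$ was arbitrary in $(P_E)^\perp$, every vector there has vanishing $S$-coordinate, giving $(P_E)^\perp \subseteq \{x \in \mathbb{R}^\mathscr{S} \mid x_S = 0\}$.

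The argument is short because the PLP hypothesis does all the structural work; the one point that must not be glossed over is the surjectivity of $L_{x^*}$ onto $(P_E)^\perp$ used in the forward direction. It is precisely this that guarantees every parameter vector is \emph{attained} by a genuine positive equilibrium, so that a nonzero $S$-coordinate in $(P_E)^\perp$ would produce an equilibrium witnessing the failure of ACR. Were the equilibrium set only contained in, rather than equal to, the log-parametrized set, this implication could break down; accordingly I would invoke the full PLP characterization (both inclusions of the defining set), not merely $E_+(\mathscr{N},K) \subseteq \{x \mid \log x - \log x^* \in (P_E)^\perp\}$.
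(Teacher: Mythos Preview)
Your proof is correct. Note, however, that the paper does not supply its own proof of this statement: it is quoted verbatim as Theorem 3.12 of \cite{LLMM2022} and used as a black box in the subsequent ACR analysis of Anderies systems, so there is no in-paper argument to compare against. Your approach---reducing ACR to the vanishing of the $S$-coordinate on $(P_E)^\perp$ via the log-parametrization, and invoking the surjectivity of $L_{x^*}$ for the forward direction---is exactly the natural one, and your emphasis on needing the full equality $E_+(\mathscr{N},K) = \{x \mid \log x - \log x^* \in (P_E)^\perp\}$ rather than mere containment is well placed.
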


\noindent In the same study, the authors derive a simple procedure for assessing ACR in a PLP system in the following proposition:

\begin{proposition} (Prop. 4.1, \cite{LLMM2022}) 
Let $\{ v_1, \dots, v_E \}$ be a basis of the parameter subspace $(P_E)^\perp$ of a PLP system $(\mathscr{N},K)$. The system has ACR is species $S$ if and only if the coordinate corresponding to $S$ in each basis vector $v_{i,S}=0$ for each $i=1,\dots,E$. 
\end{proposition}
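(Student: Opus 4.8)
The plan is to obtain this statement as an immediate corollary of the species hyperplane criterion (Theorem 3.12 above), reducing it to an elementary fact about when a subspace lies inside a coordinate hyperplane. First I would fix notation and write $H_S := \{ x \in \mathbb{R}^\mathscr{S} \mid x_S = 0 \}$ for the species hyperplane associated with $S$; this is precisely the kernel of the coordinate functional $\pi_S : \mathbb{R}^\mathscr{S} \to \mathbb{R}$, $\pi_S(x) = x_S$. By Theorem 3.12, the PLP system $(\mathscr{N},K)$ has ACR in species $S$ if and only if $(P_E)^\perp \subseteq H_S$. Thus the entire proposition amounts to showing that
\begin{equation*}
(P_E)^\perp \subseteq H_S \iff v_{i,S} = 0 \text{ for each } i = 1,\dots,E,
\end{equation*}
where $\{ v_1, \dots, v_E \}$ is the given basis of $(P_E)^\perp$.

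For the forward implication, suppose $(P_E)^\perp \subseteq H_S$. Since each basis vector $v_i$ lies in $(P_E)^\perp$, it lies in $H_S$, so by definition of $H_S$ its $S$-coordinate vanishes, i.e. $v_{i,S} = \pi_S(v_i) = 0$. For the converse, suppose $v_{i,S} = 0$ for all $i$. Any element $v \in (P_E)^\perp$ can be written as $v = \sum_{i=1}^{E} c_i v_i$ for scalars $c_i$, and by linearity of the coordinate functional $\pi_S$ we get $v_S = \pi_S\bigl(\sum_i c_i v_i\bigr) = \sum_i c_i \pi_S(v_i) = \sum_i c_i v_{i,S} = 0$, whence $v \in H_S$. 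Therefore $(P_E)^\perp \subseteq H_S$, and the equivalence follows. Combining this with Theorem 3.12 gives the proposition.

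There is no genuine obstacle here: the content is entirely carried by the species hyperplane criterion, and the remaining step is the standard linear-algebra observation that a subspace is contained in the kernel of a linear functional precisely when the functional annihilates a spanning set. The only thing to be careful about is the direction of the correspondence, namely that the relevant containment is of the \emph{parameter} subspace $(P_E)^\perp$ (not the flux subspace $P_E$) in the species hyperplane, which is exactly how Theorem 3.12 is phrased; once that is pinned down, the argument is a one-line application of linearity to the chosen basis.
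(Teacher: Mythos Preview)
Your proposal is correct. The paper itself does not give a proof of this proposition --- it is quoted from \cite{LLMM2022} as a known result and stated immediately after Theorem 3.12 precisely because it is the obvious basis-level reformulation of that criterion. Your argument (reduce to the species hyperplane criterion and observe that a subspace lies in $\ker \pi_S$ iff a spanning set does) is exactly the intended derivation, so there is nothing to compare.
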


\subsubsection{ACR dichotomy among Anderies systems}
The coordinates of the basis vector (in Eq. \ref{eq:basis}) for  are all non-zero in the cases of $\textsf{AND}_>$ and $\textsf{AND}_<$, implying that in those cases, the systems do not have ACR in any of the species. On the other hand, for systems in $\textsf{AND}_0$, ACR holds for the species $A_2$ and $A_3$.

\subsubsection{ACR and equilibria multiplicity analysis in $\textsf{AND}_>$ systems}
In this Section, we show that the absence of ACR in these systems derives from its co-multistationarity property. Furthermore, we apply a result from M\"{u}ller and Regensburger \cite{MURE2012} to prove their multistationarity.

\begin{proposition}
Any $\textsf{AND}_>$ system is co-multistationary.
\end{proposition}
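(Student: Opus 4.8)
The plan is to read the set of positive equilibria directly off the PLP structure established above and then test whether some co-stoichiometric class meets it twice. Since an Anderies system is conservative, closed and of maximal rank, $\mathcal{S}$ is a hyperplane in $\mathbb{R}^\mathscr{S}=\mathbb{R}^3$ and $\mathcal{S}^\perp$ is the line spanned by the conservation vector $(1,1,1)$; hence every co-stoichiometric class $x+\mathcal{S}^\perp$ is a line parallel to $(1,1,1)$. Using the PLP characterization with flux subspace $P_E=\widetilde{\mathcal{S}}_\#$ and the basis (\ref{eq:basis}) of $(P_E)^\perp$, the equilibria set is the one-parameter curve
\begin{equation*}
E_+ = \left\{\, x(t)=\left(x_1^* e^{-t},\ x_2^* e^{Rt},\ x_3^* e^{Rt}\right) \,\middle|\, t\in\mathbb{R} \,\right\},
\end{equation*}
where $x^*$ is any fixed positive equilibrium and $R>0$ for an $\textsf{AND}_>$ system. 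Co-multistationarity is then exactly the assertion that the composite $t\mapsto x(t)\mapsto x(t)+\mathcal{S}^\perp$ into $\mathbb{R}^\mathscr{S}/\mathcal{S}^\perp$ fails to be injective, i.e.\ that some line parallel to $(1,1,1)$ meets the curve in two distinct points.

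First I would make the quotient condition concrete. Two equilibria $x(t_1),x(t_2)$ lie in a common co-stoichiometric class if and only if $x(t_1)-x(t_2)\in\mathcal{S}^\perp$, equivalently $\phi(x(t_1))=\phi(x(t_2))$ for every functional $\phi$ annihilating $(1,1,1)$. Choosing the basis $\phi_1(x)=x_1-x_3$ and $\phi_2(x)=x_2-x_3$ of such functionals turns the task into a pair of scalar equations in $(t_1,t_2)$, and I would search for a solution with $t_1\neq t_2$ by an intermediate-value argument. Geometrically the curve lies in the plane $\{x_2/x_3=x_2^*/x_3^*\}$, so the first thing to settle is whether the conservation direction $(1,1,1)$ even lies in that plane; once that is arranged, the remaining content is the non-monotonicity of a single one-variable function along the curve, from which two preimages at a common level follow by continuity and the behaviour at the ends $t\to\pm\infty$.

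The hard part will be this non-injectivity step, since it is acutely sensitive to the sign of $R$ and to how $(1,1,1)$ sits relative to the curve. Concretely, $\phi_2(x(t))=(x_2^*-x_3^*)e^{Rt}$ is strictly monotone whenever $x_2^*\neq x_3^*$, which would force $t_1=t_2$; so the argument can only close in the regime where the reference complex-balanced equilibrium of the LDC satisfies $x_2^*=x_3^*$ (a constraint on the rate constants through the $A_2\rightleftarrows A_3$ linkage class), after which one must show that $t\mapsto x_1^* e^{-t}-x_3^* e^{Rt}$ is not injective by locating a critical point governed by the exponent ratio $-1/R$. Pinning down this turning behaviour — deciding precisely when the equilibria curve doubles back across a co-stoichiometric line — is where I expect the real work and the delicacy of the classification by the sign of $R$ to concentrate. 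Once a pair $t_1\neq t_2$ is secured, co-multistationarity is immediate from the definition, and via Proposition \ref{prop:ACRthenco-mono} it re-derives the absence of ACR species in $\textsf{AND}_>$ systems.
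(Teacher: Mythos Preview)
Your PLP-based approach is a genuinely different route from the paper's (which works directly with the equilibrium relation $k_1A_1^{p_1}A_2^{q_1}=k_2A_1^{p_2}A_2^{q_2}$, subtracts two instances, applies a formal factorization $a^y-b^y=(a-b)\,\textsf{SUM}(a,b,y)$, and closes with the Intermediate Value Theorem). Your reduction is sound up to the decisive scalar step: you correctly observe that $\phi_2(x(t))=(x_2^*-x_3^*)e^{Rt}$ is strictly monotone unless $x_2^*=x_3^*$ (which matches the paper's assumption $\beta=1$), leaving the non-injectivity of $g(t)=\phi_1(x(t))=x_1^*e^{-t}-x_3^*e^{Rt}$ as the ``hard part''.

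The gap is that this step cannot be completed for $R>0$. Since
\[
g'(t)=-x_1^*e^{-t}-R\,x_3^*e^{Rt}<0\qquad\text{for every }t\in\mathbb{R},
\]
$g$ is strictly decreasing and hence injective; there is no critical point ``governed by the exponent ratio $-1/R$'' to locate. Geometrically, along the equilibria curve $x_1$ strictly decreases while $x_2$ and $x_3$ strictly increase, so the coordinates of $x(t_1)-x(t_2)$ can never all share a sign, let alone a common value $\alpha$. Your own framework, carried to its conclusion, therefore yields co-\emph{mono}stationarity for $\textsf{AND}_>$, not co-multistationarity. This is consistent with the equilibrium relation one actually obtains from $\dot A_1=0$, namely $A_1^{p_1-p_2}=(k_2/k_1)\,A_2^{\,q_2-q_1}$: for $R>0$ the exponents $p_1-p_2$ and $q_2-q_1$ have opposite signs, forcing $A_1$ and $A_2$ to vary in opposite directions. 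The paper's displayed subtraction carries the exponent $q_1-q_2$ rather than $q_2-q_1$ on the right-hand side; with the corrected sign its Intermediate Value argument meets exactly the obstruction your parametrization makes explicit. The turning-point behaviour you anticipate can occur only when $R<0$.
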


\begin{proof}
The power law approximation of an Anderies system is computed in \cite{DOA2018} as follows:
\begin{equation*} 
  \begin{array}{cl}
\dot{A}_1 &= k_1 A_1^{p_1}A_2^{q_1} - k_2 A_1^{p_2}A_2^{q_2} \\
\dot{A}_2 &= k_2 A_1^{p_2}A_2^{q_2} - k_1 A_1^{p_1}A_2^{q_1} - a_m A_2 + a_m \beta A_3   \\
\dot{A}_3 &= a_m A_2 - a_m \beta A_3.
  \end{array}
\end{equation*} 
Suppose $ A = \left( A_1, A_2, A_3 \right)$ and $ A' = \left( A_1', A_2', A_3' \right)$ are positive equilibria and $\alpha >0$. Note that $\mathcal{S}^\perp =\text{span } \{ (1,1,1) \}$. For the difference to lie in $\mathcal{S}^\perp$, from the ODE system, assuming $\beta = 1$, we have:
$$ A_2' - A_2 = A_3' - A_3 = \alpha.$$
Furthermore, $k_1 A_1^{p_1} A_2^{q_1} = k_2 A_1^{p_2} A_2^{q_2}$. Similarly, for $A'$. Subtracting the first from the second, we have
\begin{equation}\label{eq:AND>}
(A_1')^{p_1-p_2} - A_1^{p_1-p_2} = \dfrac{k_2}{k_1} \left[ (A_2')^{q_1-q_2} - A_2^{q_1-q_2} \right].
\end{equation}
For $\textsf{AND}_>$, we can assume that both $p_1 – p_2$ and $q_1 – q_2 > 0$. 
We next assume that for $a, b, y$ positive real numbers, $a > b$, the (formal)  equation holds (the intermediate terms just eliminate each other):
$$ a^y – b^y =(a-b) \left( a^{y-1}+a^{y-2}b + \cdots + b^{y-1} \right).$$
For convenience, write $\textsf{SUM} (a, b, y)$ for the second factor of the RHS of the previous equation. Substituting in Equation \ref{eq:AND>}, we get
$$
A_1' – A_1 = \dfrac{k_2 \left( A_2' – A_2 \right) \textsf{SUM} \left( A_2', A_2, q_1 -q_2 \right)}{k_1 \textsf{SUM}\left( A_1', A_1, p_1 - p_2 \right)}.
$$
The condition for $A_1' – A_1= \alpha$ is hence $$ \frac{k_2}{k_1} \textsf{SUM} \left( A_2', A_2, q_1 -q_2 \right)= \textsf{SUM}\left( A_1', A_1, p_1 - p_2 \right).$$
If we choose an $A_2$ , for any $\alpha$,  the LHS is just a positive real number. The RHS is a continuous, monotonically increasing function, so there is an $A_1$ such that the RHS takes on the value of the LHS (by the Intermediate Value Theorem). This gives two equilibria whose difference is $(\alpha,\alpha,\alpha)$ and hence co-multistationarity.
\end{proof}

 We will now use the LDC to derive equilibria multiplicity properties of the 3 classes. The following general proposition provides the basis for the analysis:
 \begin{proposition}
 If $(\mathscr{N},K)$ and $(\mathscr{N}^\#, K^\#)$  are dynamically equivalent with $\mathcal{S} = \mathcal{S}_\#$, then $(\mathscr{N},K)$ is multistationary if and only if $(\mathscr{N}^\#, K^\#)$ is multistationary.
 \end{proposition}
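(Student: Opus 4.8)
The plan is to reduce the statement to a bookkeeping comparison of the two positive-equilibria sets against a single common partition of the non-negative orthant into stoichiometric compatibility classes. Dynamic equivalence of $(\mathscr{N},K)$ and $(\mathscr{N}^\#,K^\#)$ means precisely that their species formation rate functions coincide, $f=f_\#$ on $\mathbb{R}^\mathscr{S}_{>0}$ (this is the special case $c=\mathbf{1}$ of the linear-conjugacy relation $f_\#=(\text{diag}\,c)f$ recalled in Section \ref{sec4.4}). Consequently the two systems have literally the same set of positive equilibria, since $E_+(\mathscr{N},K)=\{x\in\mathbb{R}^\mathscr{S}_{>0}\mid f(x)=0\}=\{x\in\mathbb{R}^\mathscr{S}_{>0}\mid f_\#(x)=0\}=E_+(\mathscr{N}^\#,K^\#)$, which I will denote by the common symbol $E_+$.

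First I would invoke the hypothesis $\mathcal{S}=\mathcal{S}_\#$ to observe that the two networks induce the very same family of stoichiometric compatibility classes: for each $x$, the class $(x+\mathcal{S})\cap\mathbb{R}^\mathscr{S}_{\geq 0}$ associated with $\mathscr{N}$ equals the class $(x+\mathcal{S}_\#)\cap\mathbb{R}^\mathscr{S}_{\geq 0}$ associated with $\mathscr{N}^\#$. Next, for any positive stoichiometric compatibility class $\mathcal{P}$, I would intersect it with the common equilibria set, obtaining $E_+(\mathscr{N},K)\cap\mathcal{P}=E_+\cap\mathcal{P}=E_+(\mathscr{N}^\#,K^\#)\cap\mathcal{P}$, so that $\lvert E_+(\mathscr{N},K)\cap\mathcal{P}\rvert=\lvert E_+(\mathscr{N}^\#,K^\#)\cap\mathcal{P}\rvert$ for every such $\mathcal{P}$. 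Finally I would simply read off the definition of multistationarity: a system is multistationary exactly when some positive class $\mathcal{P}$ carries at least two positive equilibria, and by the displayed equality of cardinalities this occurs for $(\mathscr{N},K)$ precisely when it occurs for $(\mathscr{N}^\#,K^\#)$, yielding the desired equivalence.

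The argument is essentially definitional, so I do not expect any analytic difficulty; the one point that genuinely needs the full strength of the hypotheses, and which I regard as the crux, is the role of $\mathcal{S}=\mathcal{S}_\#$. Dynamic equivalence by itself forces the equilibria sets to coincide but says nothing about how the ambient orthant is partitioned. Were the two stoichiometric subspaces different, the induced partitions could differ, and two equilibria sharing a class for one network might fall into distinct classes for the other, breaking the per-class cardinality comparison; hence I would emphasize that it is the equality of the stoichiometric subspaces, not merely of the equilibria sets, that makes the counts transferable. I would also note that the existential ``there exist positive rate constants'' in the definition of multistationarity poses no issue here, since the rate constants of both systems are fixed by the given dynamical equivalence: the statement compares two concrete systems, and a witness class carrying two equilibria for one is verbatim a witness class for the other.
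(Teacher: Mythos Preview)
Your proof is correct and follows essentially the same line as the paper's: both observe that dynamic equivalence forces $E_+(\mathscr{N},K)=E_+(\mathscr{N}^\#,K^\#)$ while the hypothesis $\mathcal{S}=\mathcal{S}_\#$ forces the stoichiometric compatibility classes to coincide, so the per-class equilibrium counts agree and the equivalence is immediate. The paper's proof is a one-sentence version of exactly what you have spelled out.
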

 \begin{proof}
 Since the positive equilibria sets (due to dynamical equivalence) and the stoichiometric classes (due to $\mathcal{S} = \mathcal{S}_\#$) are equal, we immediately obtain the equivalence.
 \end{proof}
This applies to an Anderies system and its LDC: since the identical kinetics is ISK, and both networks are $t$-minimal, both systems have KSSC, leading to $\mathcal{S} = \mathcal{S}_\#$.

We will first show that any system in $\textsf{AND}_>$ is multistationary. The generalized mass action systems (GMAS) of M\"{u}ller and Regensburger in their 2012 paper \cite{MURE2012} correspond to PL-RDK systems which are ISK (note that for cycle terminal systems, ISK = FSK, i.e. factor span surjective). For weakly reversible PL-FSK systems, they have the following sufficient condition for multistationarity (in terms of sign spaces):

\begin{proposition}(Proposition 3.2 of \cite{MURE2012})
If for a weakly reversible generalized mass action system with $\sigma(\mathcal{S}) \cap \sigma(\widetilde{\mathcal{S}})^\perp \neq \{ 0 \}$, then there is a stoichiometric class with more than one complex balanced equilibrium.
\end{proposition}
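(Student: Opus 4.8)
The plan is to pass to the logarithmic parametrization of the complex balanced locus and reduce the claim to a sign‑vector realizability statement. Since the network is weakly reversible, a generalized mass action system on it admits a complex balanced equilibrium for appropriate rate constants (the GMAS analogue of the Horn--Jackson theorem); fix such a reference state $x^\ast \in Z_+(\mathscr{N},K)$. By Proposition 2.21 of M\"{u}ller and Regensburger \cite{MURE2012} the system is then CLP with flux subspace $P_Z = \widetilde{\mathcal{S}}$, so that
\begin{equation*}
Z_+(\mathscr{N},K) = \{\, x \in \mathbb{R}^\mathscr{S}_{>0} \mid \log x - \log x^\ast \in \widetilde{\mathcal{S}}^\perp \,\}.
\end{equation*}
Two complex balanced equilibria $x^{(1)}, x^{(2)}$ lie in a common stoichiometric class exactly when $x^{(2)} - x^{(1)} \in \mathcal{S}$, while automatically $\xi := \log x^{(2)} - \log x^{(1)} \in \widetilde{\mathcal{S}}^\perp$. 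Writing $x^{(2)} = \mathrm{diag}(x^{(1)})\, e^{\xi}$, each coordinate satisfies $x^{(2)}_i - x^{(1)}_i = x^{(1)}_i (e^{\xi_i} - 1)$, whose sign equals $\operatorname{sgn}(\xi_i)$ because $x^{(1)}_i > 0$. Thus $\sigma(x^{(2)} - x^{(1)}) = \sigma(\xi)$, so a pair of distinct equilibria in one class produces a common nonzero element of $\sigma(\mathcal{S}) \cap \sigma(\widetilde{\mathcal{S}}^\perp)$. This already shows the sign condition is necessary, and it pinpoints what remains: the proposition asserts sufficiency, so I must manufacture the second equilibrium from the hypothesis.

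For sufficiency I would fix a nonzero $\tau \in \sigma(\mathcal{S}) \cap \sigma(\widetilde{\mathcal{S}}^\perp)$ and realize it by vectors $\mu \in \mathcal{S}$ and $\nu \in \widetilde{\mathcal{S}}^\perp$ with $\sigma(\mu) = \sigma(\nu) = \tau$. The goal is a nonzero $\xi \in \widetilde{\mathcal{S}}^\perp$ with $\mathrm{diag}(x^\ast)(e^\xi - \mathbf{1}) \in \mathcal{S}$, where $\mathbf{1}$ is the all-ones vector; any such $\xi$ furnishes $x^{(1)} = x^\ast$ and $x^{(2)} = \mathrm{diag}(x^\ast)\,e^\xi$ as two distinct complex balanced equilibria in one class. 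Encoding the membership in $\mathcal{S}$ by the orthogonal projection $Q$ onto $\mathcal{S}^\perp$, this amounts to finding a nontrivial zero of the square map
\begin{equation*}
F : \widetilde{\mathcal{S}}^\perp \rightarrow \mathcal{S}^\perp, \qquad F(\xi) = Q\bigl( \mathrm{diag}(x^\ast)(e^{\xi} - \mathbf{1}) \bigr),
\end{equation*}
which has the trivial zero $F(0)=0$ and matched domain and codomain dimension $m-s$. The two sign‑matched vectors $\mu$ and $\nu$ are what fix the search region: one restricts attention to the orthant $\{\xi : \operatorname{sgn}\xi = \tau\}$ of $\widetilde{\mathcal{S}}^\perp$, where the prescribed signs of $\mathrm{diag}(x^\ast)(e^\xi-\mathbf{1})$ are compatible with reaching into $\mathcal{S}$ (witnessed by $\mu$), and one runs an Intermediate Value / Brouwer-degree (Poincar\'{e}--Miranda) argument on $F$ to detect a zero in its interior.

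The main obstacle is precisely this sufficiency step: converting the combinatorial coincidence of a sign pattern in $\sigma(\mathcal{S})$ and in $\sigma(\widetilde{\mathcal{S}}^\perp)$ into an analytic solution $\xi \neq 0$ of $F(\xi)=0$. The difficulty is that the obstruction is global rather than local --- the Jacobian of $F$ at $0$ may well be nonsingular, so the second equilibrium is produced by a fold far from $x^\ast$ and not by a bifurcation at it. Making the degree computation rigorous requires controlling the asymptotics of $F$ as $\|\xi\|\to\infty$ along each coordinate direction and checking that the matching signs of $\mu$ and $\nu$ make the relevant boundary contributions to the degree on a large ball fail to cancel; this oriented‑matroid/sign‑vector bookkeeping is the technical heart and is exactly where the hypothesis $\sigma(\mathcal{S}) \cap \sigma(\widetilde{\mathcal{S}}^\perp) \neq \{0\}$ is genuinely consumed. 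Once a nonzero $\xi$ is secured, $x^\ast$ and $\mathrm{diag}(x^\ast)\,e^\xi$ are distinct complex balanced equilibria sharing a stoichiometric class, which is the assertion.
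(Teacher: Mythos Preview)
The paper does not prove this proposition: it is quoted verbatim as Proposition~3.2 of M\"{u}ller and Regensburger \cite{MURE2012} and then immediately applied to the Anderies LDC by exhibiting a common sign vector in $\sigma(\mathcal{S}) \cap \sigma(\widetilde{\mathcal{S}}^\perp)$. There is therefore no in-paper argument to compare your attempt against; the statement functions here purely as an imported tool.

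As for the substance of your sketch: the reduction is correct. Using the CLP description $Z_+ = \{x : \log x - \log x^\ast \in \widetilde{\mathcal{S}}^\perp\}$ and the observation that $\operatorname{sgn}(x^{(2)}_i - x^{(1)}_i) = \operatorname{sgn}(\log x^{(2)}_i - \log x^{(1)}_i)$ cleanly translates ``two complex balanced equilibria in one stoichiometric class'' into the sign-vector intersection, and your map $F:\widetilde{\mathcal{S}}^\perp \to \mathcal{S}^\perp$ is the right object to study. However, you explicitly stop short of the only nontrivial step: producing a nonzero root of $F$ from the hypothesis. Saying that a Poincar\'e--Miranda or degree argument ``should'' work, with the sign-matched witnesses $\mu$ and $\nu$ controlling boundary behaviour, is a plan rather than a proof; you have not specified the box on which Poincar\'e--Miranda is to be applied, nor verified the requisite sign conditions on its faces, nor handled the coordinates on which $\tau$ vanishes. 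Since the proposition asserts exactly this sufficiency direction, what you have written is a correct framing together with an honest identification of the gap, but not a proof. If you want to complete it, the original argument in \cite{MURE2012} is the reference to consult; the paper under review offers no help here.
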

For the 1-dimensional subspace $(\widetilde{\mathcal{S}})^\perp$ of $\textsf{AND}_>$, we have  $\sigma(\widetilde{\mathcal{S}})^\perp = \{ (-,+,+), (+,-,-) \} $. Since $\mathcal{S}=\alpha (-1,1,0)+\beta(0,-1,1)=(-\alpha,\alpha-\beta,\beta)$, choosing $\alpha>\beta>0$ gives an element in $\mathcal{S}$ with  $\{ (-,+,+) \}$, verifying the non-empty intersection.

The application of CRNToolbox's Concordance Test confirms the new observation that the Anderies network is discordant. However, since the Anderies systems in \cite{DOA2018} and  \cite{FAML2021} are non-PL-NIK, we cannot directly derive this from their multistationarity.

\subsubsection{ACR and equilibria multiplicity analysis in $\textsf{AND}_0$ systems}

Since any $\textsf{AND}_0$ system has ACR in species $A_2$ and $A_3$, the following Proposition is an immediate Corollary of Proposition \ref{prop:ACRthenco-mono}  in Section \ref{sec4.2.2}:

\begin{proposition}
Any $\textsf{AND}_0$ system is co-monostationary.
\end{proposition}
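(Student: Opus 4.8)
The plan is to treat this as a one-line consequence of Proposition \ref{prop:ACRthenco-mono}, so the entire task reduces to checking that an arbitrary $\textsf{AND}_0$ system satisfies that proposition's two hypotheses. First I would record that every Anderies system is a conservative, closed network of maximal rank: here $m = 3$ and $s = 2$, so $\mathcal{S}$ is a hyperplane (closed, maximal rank), and $\mathcal{S}^\perp = \text{span}\{(1,1,1)\}$ contains a strictly positive vector, which is conservativity. Both facts were already recorded in Section \ref{sec5.3}, so this step is purely a recollection of established data.

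Second, I would invoke the ACR dichotomy proved immediately above: applying the species hyperplane criterion (Prop. 4.1 of \cite{LLMM2022}) to the basis vector $v$ of Eq. (\ref{eq:basis}), the condition $R = 0$ forces its second and third coordinates to vanish, so every $\textsf{AND}_0$ system has ACR in $A_2$ and $A_3$. In particular it has at least one ACR species, which is exactly the remaining hypothesis of Proposition \ref{prop:ACRthenco-mono}.

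With both hypotheses verified, Proposition \ref{prop:ACRthenco-mono} applies directly and delivers co-monostationarity. For completeness I would spell out the underlying contradiction once more in this setting: if two distinct positive equilibria shared a co-stoichiometric class, their (nonzero) difference would lie in $\mathcal{S}^\perp$, yet the $A_2$-coordinate of that difference is forced to be zero by the ACR property; since $\mathcal{S}^\perp$ is the line spanned by the everywhere-nonzero vector $(1,1,1)$, a nonzero element with a vanishing coordinate is impossible. I do not anticipate any genuine obstacle, as every ingredient is an already-proved statement; the only point needing care is confirming that the ``maximal rank / closed / conservative'' triple matches the $\textsf{AND}_0$ data exactly, and that the ACR species feeding the contradiction ($A_2$ or $A_3$) is one whose coordinate genuinely must vanish in every element of $\mathcal{S}^\perp$.
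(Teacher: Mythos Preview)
Your proposal is correct and is exactly the approach the paper takes: the paper states the proposition as an immediate corollary of Proposition \ref{prop:ACRthenco-mono}, invoking the already-established facts that any Anderies system is a conservative, closed network of maximal rank and that $\textsf{AND}_0$ systems have ACR in $A_2$ and $A_3$. Your additional unpacking of the contradiction (a nonzero element of the one-dimensional $\mathcal{S}^\perp = \text{span}\{(1,1,1)\}$ cannot have a vanishing coordinate) is sound and simply makes explicit what the paper leaves implicit.
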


We now show that monostationarity of $\textsf{AND}_0$ systems also derives from its ACR properties.

\begin{proposition}
Any $\textsf{AND}_0$ system is monostationary.
\end{proposition}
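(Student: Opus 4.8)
The plan is to deduce monostationarity directly from the two ACR species $A_2$ and $A_3$ exhibited in the previous subsection, together with the conservativity of the Anderies network. First I would translate the ACR information into a description of $E_+$. Setting $R=0$ in the basis (\ref{eq:basis}) of the parameter subspace gives $(P_E)^\perp = \mathrm{span}\{(-1,0,0)\}$, so by the PLP structure every positive equilibrium $x$ satisfies $\log x - \log x^* \in \mathrm{span}\{(-1,0,0)\}$; hence $x_2$ and $x_3$ are pinned to the common values $x_2^*, x_3^*$ (this is exactly the ACR in $A_2$ and $A_3$), while $x_1$ ranges over all of $\mathbb{R}_{>0}$. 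Thus $E_+ = \{(t, x_2^*, x_3^*) \mid t \in \mathbb{R}_{>0}\}$ is a single line in the direction $(1,0,0)$.

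Next I would intersect this line with an arbitrary stoichiometric class. If $x$ and $x'$ are positive equilibria lying in the same class, then $x - x' \in \mathcal{S}$, while the previous step forces $x - x' = (x_1 - x_1', 0, 0)$. Since the Anderies network is conservative with $\mathcal{S}^\perp = \mathrm{span}\{(1,1,1)\}$, every element of $\mathcal{S}$ is orthogonal to $(1,1,1)$; applying this to $x - x'$ yields $(x_1 - x_1') + 0 + 0 = 0$, i.e.\ $x_1 = x_1'$ and hence $x = x'$. Therefore each stoichiometric class meets $E_+$ in at most one point, which is monostationarity.

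I do not expect a genuine obstacle here; the one point deserving care is the passage between the multiplicative (logarithmic) description of $E_+$ and the additive description of stoichiometric classes, and ACR is precisely what bridges them --- it upgrades the log-equalities $\log x_2 = \log x_2'$, $\log x_3 = \log x_3'$ to equalities of concentrations, so the difference $x-x'$ is supported only on the non-ACR coordinate $A_1$, where the single conservation law then closes the argument. Geometrically, the equilibrium line has direction $(1,0,0)$ with $\langle (1,0,0),(1,1,1)\rangle = 1 \neq 0$, so it is transversal to every level set of the conservation functional and meets each in exactly one point. This mirrors Proposition \ref{prop:ACRthenco-mono}: a single ACR species already forces co-monostationarity, whereas the two ACR species present in $\textsf{AND}_0$ push distinct equilibria into distinct stoichiometric classes and thereby yield full monostationarity.
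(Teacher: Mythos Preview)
Your proof is correct and follows essentially the same logic as the paper's: both arguments use that ACR in $A_2$ and $A_3$ forces any two equilibria to differ only in the $A_1$-coordinate, and then invoke the single conservation law (total carbon, i.e.\ orthogonality to $(1,1,1)$) to conclude that equilibria in the same stoichiometric class must in fact agree in $A_1$ as well. The only difference is presentational: the paper quotes the explicit parametrization of $E_+$ from \cite{FLRM2021} and phrases the conservation step as ``$A_0$ is the same within a stoichiometric class,'' whereas you extract the same line description of $E_+$ directly from the PLP structure established in Section~\ref{sec5.3} and phrase the conservation step as orthogonality of $x-x'$ to $(1,1,1)$.
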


\begin{proof}
It is shown in \cite{FLRM2021} that when $p_1=p_2$ the equilibria set is given by
$$
E_+(\mathscr{N},K)= 
\left\lbrace
\left[ 
\begin{array}{c}
A_1 \\
A_2 \\
A_3 \\  
\end{array} 
\right] \in \mathbb{R}^\mathscr{S}_{>0}  \; \middle\vert \;
\
\begin{array}{ll}
A_2 &=
\left(
\dfrac{k_2}{k_1} \right)^{\frac{1}{q_1-q_2}}, \\
A_3 &=\dfrac{1}{\beta}\left(
\dfrac{k_2}{k_1} \right)^{\frac{1}{q_1-q_2}}, \text{ and}\\
A_1 &= A_0 - \left( 1+ \dfrac{1}{\beta} \right) \left(
\dfrac{k_2}{k_1} \right)^{\frac{1}{q_1-q_2}}
\end{array}
\right\rbrace,
$$
where $A_0=$ total conserved carbon at pre-industrial state. 
If there are two distinct equilibria in a stoichiometric class, they can differ only in $A_1$, since both $A_2$ and $A_3$ have ACR. One can set $\beta = 1$ (so that $A_2 = A_3$).  Hence, $A_1$ has two terms, $A_0$ and 2 times the value of $A_2$. However, for two elements in the same stoichiometric class, $A_0$ is the same (easily derived from the definition of conserved amount). This implies that there is at most one positive equilibrium in each stoichiometric class, i.e. the system is monostationary.
\end{proof}

\subsubsection{ACR and equilibria multiplicity analysis in $\textsf{AND}_<$ systems}\label{sec:AND<}

For $\textsf{AND}_<$ systems, we identify two subsets of injective systems, which are necessarily monostationary by applying the following result of Wiuf and Feliu \cite{WIUF2013}:

\begin{theorem}(\cite{WIUF2013,FELIU2013})\label{theorem:wiuffeliu}
The interaction network with power law kinetics and fixed kinetic orders is injective if and only if the determinant of $M^*$ is a non-zero homogeneous polynomial with all coefficients being positive or all being negative. 
\end{theorem}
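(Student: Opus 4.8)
The plan is to prove the equivalence by reducing injectivity to a determinantal sign condition on a suitably augmented Jacobian, and then exploiting the power-law structure to read the sign condition off the coefficients of a single (homogeneous) polynomial. First I would reformulate injectivity in Jacobian terms. With $f=NK$ and $K_a(x)=k_a x^{F_{a,*}}$, a direct computation gives the factorization $J_f(x)=N\,\mathrm{diag}(K(x))\,F\,\mathrm{diag}(x)^{-1}$, where $F$ is the kinetic order matrix. Since injectivity is required only within stoichiometric compatibility classes, I would append a basis of $\mathcal{S}^\perp$ as extra linear coordinates to obtain a square matrix $M^*(x)$ whose determinant detects injectivity on classes. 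By a global injectivity theorem of Gale–Nikaido type (the form used by Craciun–Feinberg and adapted by Wiuf–Feliu), the associated augmented map is injective on $\mathbb{R}^\mathscr{S}_{>0}$ if and only if $\det M^*(x)$ is nonzero and of constant sign for every $x\in\mathbb{R}^\mathscr{S}_{>0}$.

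Second, I would expand $\det M^*$ explicitly. Applying the Cauchy–Binet formula to the factorization of $J_f$ together with the conservation-law block, $\det M^*$ becomes a finite sum of terms $c_\sigma\,x^{\mu_\sigma}$, where each exponent $\mu_\sigma\in\mathbb{R}^\mathscr{S}$ comes from a selection of reaction/species indices, and each coefficient $c_\sigma$ is a product of a maximal minor of $N$, the corresponding entries of $F$, and the positive rate constants $k_a$. Because the kinetic orders are fixed, each $c_\sigma$ is a fixed real number (its sign purely combinatorial, since the $k_a>0$ contribute only positive factors while entries of $F$ may be negative), and each power product $x^{\mu_\sigma}$ is strictly positive on the open positive orthant. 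Thus the only sign information in $\det M^*$ is carried by the coefficients $c_\sigma$, and one checks by degree bookkeeping that the resulting expression is homogeneous.

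Third, I would close both directions. If all nonzero $c_\sigma$ share a common sign, then every monomial $c_\sigma x^{\mu_\sigma}$ carries that sign on $\mathbb{R}^\mathscr{S}_{>0}$, so $\det M^*$ is nowhere zero and sign-constant, and the injectivity theorem yields injectivity. Conversely, from mixed signs I would produce a positive point where $\det M^*$ vanishes with a sign change: because the exponent vectors $\mu_\sigma$ are distinct, the monomials $x^{\mu_\sigma}$ can be scaled independently along rays in the positive orthant, so one can balance a positive monomial against a negative one and cross zero; the sign change is then promoted, via the injectivity theorem, to an explicit pair of distinct stoichiometrically compatible states $x^*,x^{**}$ with $f(x^*)=f(x^{**})$, contradicting injectivity.

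The hard part will be the reverse direction, in two respects. Analytically, I must invoke exactly the right global injectivity statement so that a sign change of $\det M^*$ genuinely delivers two distinct points with $x^*-x^{**}\in\mathcal{S}$ and equal image, rather than merely a singular Jacobian at one point. Combinatorially, I must guarantee that mixed-sign coefficients actually force a zero of $\det M^*$ on the positive orthant; a general polynomial with mixed-sign coefficients need not vanish, so the argument must lean on the positivity and independent scalability of the power products $x^{\mu_\sigma}$ (a consequence of the power-law form together with the freedom in the positive rate constants) to rule out accidental sign-definiteness and thereby pin down the stated coefficient criterion.
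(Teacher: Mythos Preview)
The paper does not prove this theorem; it is quoted as an external result from \cite{WIUF2013,FELIU2013} and then applied (via the Maple script of Feliu) to compute $\det M^*$ for the Anderies network. There is therefore no in-paper proof to compare your proposal against.

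On the substance of your sketch: the broad architecture---factor the Jacobian as $N\,\mathrm{diag}(K(x))\,F\,\mathrm{diag}(x)^{-1}$, augment by a basis of $\mathcal{S}^\perp$, expand the determinant via Cauchy--Binet, and convert a sign condition on the determinant into injectivity through a Gale--Nikaido/Craciun--Feinberg style argument---is indeed the skeleton of the Wiuf--Feliu proof. One point deserves sharpening, however. In the formulation the paper quotes, $M^*$ is built from $M=N\,\mathrm{diag}(z)\,F\,\mathrm{diag}(k)$ where $z=(z_1,\dots,z_r)$ and $k=(k_1,\dots,k_m)$ are \emph{independent symbolic} indeterminates; $\det M^*$ is then an ordinary homogeneous polynomial in these $r+m$ variables, and its coefficients are products of minors of $N$ with entries of $F$---no rate constants appear in the coefficients at all. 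Your write-up instead absorbs the rate constants into the coefficients $c_\sigma$ and uses real-exponent monomials $x^{\mu_\sigma}$, which turns the object into a generalized (signomial-type) expression and makes the phrase ``all coefficients positive or all negative'' rate-constant dependent rather than purely structural. The reason the Wiuf--Feliu reverse direction works cleanly is precisely that the $z_i$ and $k_j$ are treated as free: this is what supplies the ``independent scalability'' you invoke at the end, and it is also what encodes the quantifier ``for all rate constants'' in the statement. If you rewrite your $M^*$ in the symbolic $(z,k)$ variables and drop the rate constants from the coefficients, your outline matches the original argument.
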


In the above statement, the matrix $M^*$ is obtained by considering symbolic vectors $k=(k_1,\dots,k_m)$ and $z=(z_1,\dots,z_r)$ and letting $M=N \text{diag}(z) F \text{diag}(k)$, where $N$ is the stoichiometric matrix and $F$ is the kinetic order matrix of the PLK system. Let $\{ \omega^1,\dots,\omega^d \}$ be a basis of the left kernel of $N$ and $i_1,\dots,i_d$ be row indices. The $m \times m$ matrix $M^*$ is defined by replacing the $i_j$-th row of $M$ by $\omega^j$. The matrix $M^*$ is a symbolic matrix in $z_*$ and $k_*$.

Using the computational approach and Maple script provided by the authors in \cite{FELIU2013}, we obtain the determinant of $M^*$ for Anderies systems: 
$$\det (M^*) =-p_1 k_1 k_2 z_1 z_3-p_1 k_1 k_3 z_1 z_4+p_2 k_1 k_2 z_2 z_4+q_1 k_2 k_3 z_1 z_4-q_2 k_2 k_3 z_2 z_4.$$

\noindent Hence, for $p_1<0,p_2>0,q_1>0$, and $q_2<0$, all the terms are positive. For $p_1>0,p_2<0,q_1<0$, and $q_2>0$, all the terms are negative. In both cases, the networks are injective by Theorem \ref{theorem:wiuffeliu} and hence, monostationary. In all other cases, the systems are non-injective, which is a necessary condition for multistationarity.

\subsection{Exponential stability of equilibria}

The computation of the Jacobian matrix of $f$ at a positive steady and its eigenvalues is done in Maple. Each of the $s=2$ nonzero eigenvalues has negative real parts. Hence, all the nondegenerate steady states of both models are stable. For instance, for the kinetic orders, rate constants, and steady state values of the $\textsf{AND}_>$ system in \cite{DOA2018}, the non-zero eigenvalues are $\lambda=-0.8523,-0.101$. On the other hand, for the $\textsf{AND}_0$ system specified in \cite{FLRM2021}, the non-zero eigenvalues are $\lambda= -3.55\cdot 10^8, -0.0995$.

\subsection{Properties of Anderies systems: an overview}
Table \ref{table:anderies} provides an overview of the salient network and kinetic properties of the Anderies et al. model.

\begin{table}[!ht]
\begin{center}
\begin{minipage}{\textwidth}
\caption{Overview of network and kinetic properties of the Anderies model.}
\label{table:anderies}
\begin{tabular}{|c|l|c|}
\hline
\textbf{Property class} & \textbf{Anderies system} & \textbf{References} \\ 
\hline
\multirow{3}{*}{Network}    &   disconnected with 3 linkage classes &   \cite{DOA2018}  \\
\hhline{~--}  & non-weakly reversible & \cite{DOA2018} \\
\hhline{~--} (Digraph)& non-cycle terminal &    \cite{DOA2018} \\
\hhline{~--} &  $t$-minimal &   \cite{DOA2018} \\
\hline
\multirow{8}{*}{Network} & disconcordant    & This paper    \\
\hhline{~--} & conservative & This paper  \\
\hhline{~--} & non-monospecies & \cite{DOA2018} \\
\hhline{~--} &  deficiency = 1 & \cite{DOA2018} \\
\hhline{~--} (Stochiometry-related) &  positive dependent & \cite{DOA2018} \\
\hhline{~--} &  stoichiometric subspace is hyperplane & This paper \\
\hhline{~--} &  dependent linkage classes & \cite{DOA2018} \\
\hhline{~--} &  finest independent decomposition $\neq$  & \multirow{2}{*}{This paper} \\
\hhline{~~~} &  finest incidence independent decomposition & \\
\hline
\multirow{12}{*}{ Kinetic system}  & All: PL-RDK &  \cite{DOA2018}  \\
\hhline{~--} & All: non-PL-NIK & \cite{DOA2018,FLRM2021}  \\
\hhline{~--} & All: PL-ISK & This paper \\
\hhline{~--} & All: KSSC & This paper \\
\hhline{~--} & All: CF-decomposition. & \cite{DOA2018} \\
\hhline{~--} & All: not absolutely complex balanced & This paper  \\
\hhline{~--} & All: positive equilibria  & \multirow{2}{*}{This paper} \\
\hhline{~~~} & log-parametrized (PLP) & \\
\hhline{~--} & All: not complex balanced equilibria  & \multirow{2}{*}{\cite{DOA2018}} \\
\hhline{~~~} & log-parametrized (not CLP) & \\
\hhline{~--} & All: LDC exists & This paper  \\
\hhline{~--} & All: LDC is ACB & This paper \\
\hhline{~--} & All: exponential stability of equilibria & This paper \\
\hline
\multirow{2}{*}{ACR}  & $\textsf{AND}_>$ \& $\textsf{AND}_<$: No ACR &  This paper  \\
\hhline{~--} & $\textsf{AND}_0$: ACR in 2 species & \cite{DOA2018}, this paper  \\
\hline
\multirow{3}{*}{Equilibria multiplicity}  & $\textsf{AND}_>$ : multistationary &  \cite{DOA2018}, this paper    \\
\hhline{~--}  & $\textsf{AND}_0$: monostationary &  This paper  \\
\hhline{~--}  & $\textsf{AND}_<$: contains monostationary systems &  This paper  \\
\hline
\multirow{2}{*}{Equilibria co-multiplicity}  & $\textsf{AND}_>$ : co-multistationary &  This paper  \\
\hhline{~--}  & $\textsf{AND}_0$: co-monostationary &  This paper  \\
\hline
\end{tabular}
\end{minipage}
\end{center}
\end{table}

\section{Comparison of kinetic representations of an aggregated Schmitz model and Anderies systems} \label{sec6}

Clearly, some differences between Schmitz and Anderies systems may result from the difference in the number of species in the underlying networks. In this Section, we construct an aggregated Schmitz model with the same species as the Anderies systems and compare their kinetic representations. Among the systems compared are kinetic representations with a minimal number of differences.

\subsection{The aggregated Schmitz model}
We reduce the Schmitz model by aggregating carbon pools and adjusting the carbon transfers. We use the notation of Anderies et al. for easier comparison: $\{ M_5, M_6 \} \leftrightarrow A_1$, $\{M_2, M_3, M_4 \} \leftrightarrow A_3$, and $M_1=A_2$. The reduced model and its kinetic order matrix are shown below.
\begin{equation*}
\begin{aligned}[c]
R_1 &: A_1 \rightarrow A_2 \\
R_2 &: A_2 \rightarrow A_1 \\
R_3 &: A_2 \rightarrow A_3 \\
R_4 &: A_3 \rightarrow A_1 
\end{aligned}
\qquad \qquad
F=
\begin{blockarray}{lccc}
 & A_1 & A_2 & A_3   \\
\begin{block}{l[ccc]}
R_1 & 1 & 0 & 0 \\
R_2 & 0 & 0.36 & 0 \\
R_3 & 0 & 1 & 0 \\
R_4 & 0 & 0 & 9.8 \\
\end{block}
\end{blockarray}
\end{equation*}
The system is PL-NDK with a single NDK node $A_2$. The kinetic order 9.8 is the average of the kinetic orders $M_2 \rightarrow M_1$ (= 9.4) and $M_3 \rightarrow M_1$ in the original model.

Though the aggregated model is (as one would expect) not dynamically equivalent to the original one, an interesting result is that, except for some network numbers (number of complexes, number of reactions and rank) and the deficiency of the aggregated LDC, it shares all other network and kinetic properties of the original Schmitz system:

\begin{proposition}
The aggregated Schmitz system has the same network and kinetic properties of the Schmitz system in Table \ref{table:schmitz} Section \ref {sec4.6} except for the last one.
\end{proposition}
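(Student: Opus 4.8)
The plan is to lean on the paper's own observation that most rows of Table \ref{table:schmitz} are consequences of a small package of structural and kinetic hypotheses --- conservative, closed, maximal rank, weakly reversible, zero deficiency, monomolecular, and PL-NIK --- rather than of any data peculiar to the six-pool model. So I would first check that the aggregated system carries exactly this package. Reading off the four reactions, the digraph $A_1 \rightleftarrows A_2$, $A_2 \rightarrow A_3 \rightarrow A_1$ is strongly connected, so the network is connected, weakly reversible, cycle terminal and $t$-minimal, and it is visibly monomolecular. With $n=3$, $\ell=1$ and $\mathcal{S}=\text{span}\{(-1,1,0),(0,-1,1)\}$ of dimension $s=2$, the deficiency is $\delta=3-1-2=0$ and $\mathcal{S}$ is a hyperplane (maximal rank, closed); since $(1,1,1)\in\mathcal{S}^\perp$ the system is conservative, weak reversibility forces positive dependence of the reaction vectors, and $\ell=1$ makes it trivially ILC. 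Finally every entry of $F$ (that is $1,0.36,1,9.8$) is nonnegative, so the kinetics is PL-NIK, with $A_2$ the unique NDK node (hence PL-NDK, and a CF-decomposition exists by splitting its two reactions into separate subnetworks).

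Granting this package, the Birch property, monostationarity and absolute complex balancing transfer verbatim from the Shinar--Feinberg Positive Equilibria Theorem together with Feinberg's zero-deficiency result, once concordance is confirmed on this four-reaction network (via the CRNToolbox test or directly). For the subspace-coincidence rows I would note cycle-terminality, which forces PL-FSK $=$ PL-ISK, verify that the rows $(1,0,0),(0,0.36,0),(0,1,0),(0,0,9.8)$ of $F$ are pairwise distinct (so ISK, by Proposition \ref{prop:ISK}), and then apply Theorem \ref{theorem:nazareno}: here $t-\ell=0$, $N_R=4$, $n_r=3$ and $r_{mcf}=3$, so $r-r_{mcf}=1=N_R-n_r$, giving KSSC. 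Monomolecularity makes independence and incidence independence equivalent, so the finest independent and finest incidence-independent decompositions coincide automatically. For co-monostationarity and the absence of ACR I would derive the explicit equilibrium parametrization in the style of Proposition \ref{prop:schmitzEquilibria} and observe that distinct equilibria differ in every coordinate and never by a multiple of $(1,1,1)$; the same parametrization is not log-linear, because the NDK node pits incompatible exponents (such as $0.36$ and $9.8$) against one another, so $E_+$ is not of PLP form, and since $\delta=0$ forces $E_+=Z_+$ it is not CLP either. Exponential stability is inherited too, the system being a conservative complex-balanced zero-deficiency system.

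The only genuine break with Table \ref{table:schmitz} sits in the low-deficiency weakly reversible PL-RDK complement. I would construct it exactly as Nazareno et al. did: resolve the single NDK node $A_2$ by the $\text{CF-RM}_+$ method and run the MILP search for a weakly reversible PL-RDK linear conjugate. Because cycle-terminality makes the number of reactant complexes equal to $N_R=4$ and linear conjugacy preserves the rank $s=2$, the resulting network has $n^\#=4$ and $s^\#=2$, so its deficiency is $\delta^\#=2-\ell^\#$; the construction yields two disjoint $2$-cycles, i.e. $\ell^\#=2$, whence $\delta^\#=0$, and being weakly reversible of zero deficiency the complement is absolutely complex balanced. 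This reverses the original Schmitz LDC, which has $\delta^\#=1$ and is non-ACB, and it is the sole property (the last one) that fails to carry over --- the remaining discrepancies being only the expected changes in $n$, $r$ and $s$ caused by aggregation. I expect the main obstacle to be precisely this step: certifying that the conjugate one finds has $\ell^\#=2$, hence $\delta^\#=0$ rather than $\delta^\#=1$, since it is this deficiency drop, and nothing in the stability or equilibrium structure, that separates the aggregated model from the original. A secondary subtlety is making the not-PLP/not-CLP claim fully rigorous, that is, proving the one-dimensional equilibrium set admits no log-linear description even though the system is complex balanced.
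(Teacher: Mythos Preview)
Your overall strategy --- verify the same ``package'' of hypotheses and then let the Shinar--Feinberg and zero-deficiency machinery do the work --- matches the paper's, and your derivations of conservativity, the hyperplane, KSSC via Theorem~\ref{theorem:nazareno}, etc.\ are more detailed than what the paper actually writes (its proof only sketches a handful of ``non-straight derivations'').

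There is one substantive divergence. You argue that the aggregated system is \emph{not} PLP/CLP because the NDK node mixes exponents $0.36$ and $1$. The paper asserts exactly the opposite: it states that the aggregated system \emph{is} CLP (citing Fontanil et al.) and hence bi-LP by zero deficiency. The source of the disagreement is the reaction $R_4$. You read it as $A_3\to A_1$, following the displayed reaction list, and with that choice the equilibrium relation $k_1A_1=k_2A_2^{0.36}+k_3A_2$ is indeed not log-linear. But the paper's own finest independent decomposition is written as $A_1\rightleftarrows A_2 \cup A_2\rightleftarrows A_3$, which forces $R_4:A_3\to A_2$; with that reaction the equilibria satisfy $k_1A_1=k_2A_2^{0.36}$ and $k_3A_2=k_4A_3^{9.8}$, which \emph{is} log-parametrized. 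So the paper's proof is internally consistent with $R_4:A_3\to A_2$ (the displayed $A_3\to A_1$ appears to be a typo), and on that reading your not-PLP/not-CLP argument does not go through.

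A second, smaller point: you identify ``the last one'' with the non-ACB LDC row and devote most of your effort to showing the aggregated LDC has $\delta^\#=0$. The paper treats the LDC in a separate Remark (giving $A_1\rightleftarrows 2A_2$, $A_2\rightleftarrows 2A_3$ directly, without the MILP machinery), and the literal last row of Table~\ref{table:schmitz} is ``exponential stability of equilibria'', which the paper's proof simply does not address. Given that the paper's own proof asserts CLP/bi-LP --- contradicting the ``not PLP / not CLP'' rows of Table~\ref{table:schmitz} --- the proposition as stated is somewhat loose, and your reading of the exception is understandable; but it is not the one the paper's proof supports.
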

\begin{proof}
 We provide details only for some non-straight derivations. It is conservative since $(1,1,1)$ is a positive vector orthogonal to both basis vectors of $\mathcal{S}$. The stoichiometric subspace $\mathcal{S}$ is a hyperplane because rank  $s= 2$ and $m = 3$. The finest independent decomposition is $A_1 \rightleftarrows A_2 \cup A_2 \rightleftarrows A_3$ is also incidence independent. This is also the system’s CF-decomposition because deficiency = 0. It is CLP after Fontanil et al. \cite{FMF2021} and hence bi-LP due to zero deficiency.
\end{proof}

\begin{remark}
The aggregated linear conjugate of the Schmitz system is given by: $A_1 \rightleftarrows 2A_2$, $A_2 \rightleftarrows 2A_3$. This is now a deficiency zero PL-RDK system with 4 monospecies complexes and 2 linkage classes.
\end{remark}

\subsection{Comparison of kinetic representations of the aggregated Schmitz model with $\textsf{AND}_0$ LDCs}

It is also instructive to compare the properties of the aggregated Schmitz model with those of the deficiency zero representation of $\textsf{AND}_0$ systems. Table \ref{table:comparison} collects the remaining differences between the models\textemdash all other properties coincide.

\begin{table}[!ht]
\begin{center}
\begin{minipage}{\textwidth}
\caption{Comparison of the aggregated Schmitz model with $\textsf{AND}_0$ LDCs.}
\label{table:comparison}
\begin{tabular}{|c|c|c|}
\hline
\textbf{Property type} & \textbf{Aggregated Schmitz system} & \textbf{$\bm{\textsf{AND}_0}$ LDC} \\ 
\hline
\multirow{3}{*}{Network}    &   connected (1 linkage class) &   non-connected (2 linkage classes)   \\
\hhline{~--}  & 3 monomolecular complexes & 2 mono- + 2 bimolecular complexes \\
\hhline{~--} & concordant & discordant \\
\hline
\multirow{3}{*}{ Kinetic system}  & PL-NDK &  PL-RDK    \\
\hhline{~--} & PL-NIK & non-PL-NIK  \\
\hhline{~--} & No ACR in any species & ACR in 2 species \\
\hline
\end{tabular}
\end{minipage}
\end{center}
\end{table} 

These six characteristics constitute, in our view, the essential structural and kinetic differences between the Schmitz model and this class of Anderies systems. 
 
With respect to $\textsf{AND}_>$ systems, the ACR difference is replaced by two: monostationary/multistationary and co-monostationary/co-multistationary.   

\begin{remark}
For the LDC of an $\textsf{AND}_>$  system, the difference in ACR properties  is replaced by the differences montostationarity vs. multistationarity and co-monostationarity vs. co-multistationarity.
\end{remark}

\section{A kinetic representation of tCDR model with Anderies systems}\label{sec7}

In this Section, we illustrate the usefulness of the Anderies pre-industrial models as building blocks to form an analysis of a kinetic representation of a model of carbon dioxide removal (CDR). CDR methods, also called negative emission technologies (NETs), play an increasingly important role in strategies toward carbon neutrality and are/will be useful in addressing climate change. 

Heck et al. \cite{HECK2016} investigated the dynamics of Earth's carbon cycle when climate engineering via terrestrial carbon dioxide removal (tCDR) is considered as human intervention. In this intervention, terrestrial carbon is sequestered and permanently stored in a carbon engineering sink. The conceptual model was built upon the model of Anderies et al. \cite{AND2013}. The latter was modified to represent better the empirically observed and simulated Earth system carbon dynamics. In addition, the model was extended to integrate a societal management feedback loop that attempts to mimic international policies on climate change. 

\begin{figure}[t]%
\centering
\includegraphics[width=0.8\textwidth]{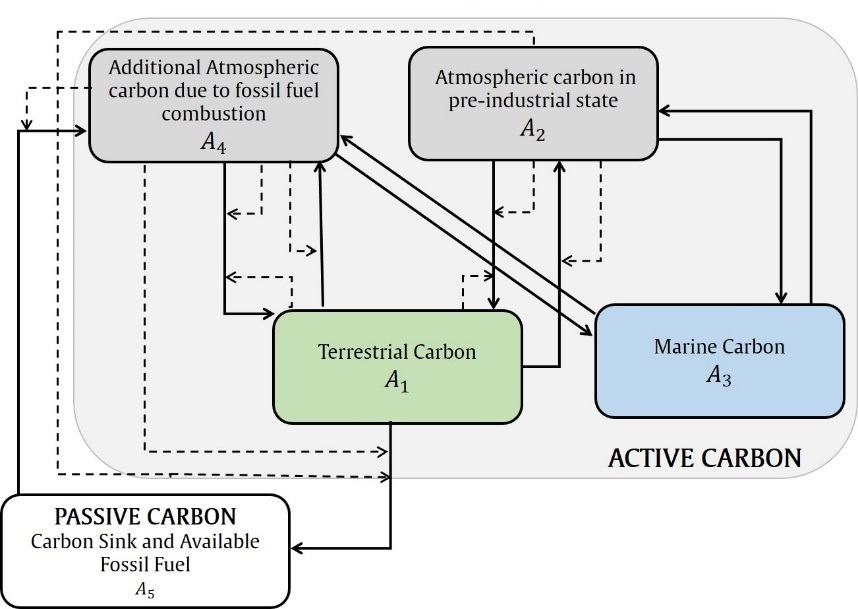}
\caption{Biochemical map of Heck et al.'s global carbon cycle model}\label{heck}
\end{figure}

The CRN-based analysis of the dynamics of the system begins by generating the network of reactions from the interactions summarized in the biochemical map in Figure \ref{heck}. The model considers pooling the geological carbon pool and the new sink to form a passive carbon pool ($A_5$) and decoupling the atmospheric carbon into two nodes: atmospheric carbon in the pre-industrial state ($A_2$) and additional atmospheric carbon due to fossil fuel use ($A_4$). The complete set of reactions is given below.

\begin{equation*}
\begin{aligned}[c]
R_1 &: A_1 +2A_2 \rightarrow 2A_1+ A_2 \\
R_2 &: A_1+A_2 \rightarrow 2A_2 \\
R_3 &: A_2 \rightarrow A_3 \\
R_4 &: A_3 \rightarrow A_2 \\
R_5 &: A_4+A_5 \rightarrow 2A_4 
\end{aligned}
\qquad \qquad
\begin{aligned}[c]
R_6 &: A_1 +2A_4 \rightarrow 2A_1+ A_4 \\
R_7 &: A_1+A_4 \rightarrow 2A_4 \\
R_8 &: A_4 \rightarrow A_3 \\
R_9 &: A_3 \rightarrow A_4 \\
R_{10} &: A_1+A_2+A_4 \rightarrow A_5 +A_2 + A_4 
\end{aligned}
\end{equation*}

It can be easily verified using CRNToolbox \cite{CRNTool} that the network is positive dependent (i.e., there are rate constants such that the system has a positive equilibrium), conservative, and closed with maximal rank ($m = 5, s = 4$). The corresponding approximated power law rate functions of the reactions are encoded in the following kinetic order matrix, indicating that the system is clearly non-PL-NIK. 
\begin{equation*}
F=
\begin{blockarray}{lccccc}
 & A_1 & A_2 & A_3 & A_4 & A_5   \\
\begin{block}{l[ccccc]}
R_1 & 199.75 & -86.03 & 0 & 0 & 0 \\
R_2 & 159.84 & -63.32 & 0 & 0 & 0 \\
R_3 & 0 & 1 & 0 & 0 & 0\\
R_4 & 0 & 0 & 1 & 0 & 0 \\
R_5 & 0 & 0 & 0 & 1 & 1.54 \\
R_6 & -43.80 & 0 & 0 & 21.42 & 0 \\
R_7 & -56.13& 0 & 0 & 22.19 & 0 \\
R_8 & 0 & 0 & 0 & 1 & 0 \\
R_9 & 0 & 0 & 1 & 0 & 0 \\
R_{10} & 1 & 4.44 & 0 & 11.52 & 0 \\
\end{block}
\end{blockarray}
\end{equation*}
There are two Anderies subnetworks, namely $\mathscr{A}_1 = \{R_1, R_2, R_3,R_4 \}$ and $\mathscr{A}_2 = \{ R_6, R_7, R_8,R_9 \}$. These subnetworks can be used to find a dynamically equivalent system of lower deficiency; that is, $\delta= 2$ instead of the original $\delta=4$. The following lower deficiency kinetic realization can be obtained using the similar observation expressed in Section \ref{sec5.2}:

\begin{equation*}
\begin{aligned}[c]
R_1, R_2' &: A_1 +2A_2 \rightleftarrows 2A_1+ A_2 \\
R_3, R_4  &: A_2 \rightleftarrows A_3 \\
R_5 &: A_4+A_5 \rightarrow 2A_4 
\end{aligned}
\qquad \qquad
\begin{aligned}[c]
R_6, R_7' &: A_1 +2A_4 \rightleftarrows 2A_1+ A_4 \\
R_8, R_9  &: A_3 \rightleftarrows A_4 \\
R_{10} &: A_1+A_2+A_4 \rightarrow A_5 +A_2 + A_4 
\end{aligned}
\end{equation*}

In terms of dynamical behavior, the aforementioned Anderies subsystems belong to the class $\textsf{AND}_<$, specifically to its non-injective subset (see Section \ref{sec:AND<}). This finding is consistent with the earlier result of Hernandez et al. \cite{HERMENDLR2022}, that the system has the capacity for multistationarity. Multistationarity in this context implies that there may exist ``tipping points" beyond which a return to the previous state may be difficult or prolonged. Nevertheless, with the foreknowledge that multistationarity may exist, the numerical search for tipping points may be guided. If a tipping point is identified, appropriate actions may then be set to avoid exceeding it.

\section{Summary and Outlook}\label{sec8}

The global carbon cycle accounts for the different pools where carbon is stored (i.e., atmosphere, ocean, land, and geological or fossil fuel pools) and the processes which transfer carbon from one reservoir to another. The pre-industrial state, where there is no mass transfer of carbon from the fossil fuel pool to the atmospheric carbon pool, is an important aspect of studies related to climate change as it serves as a reference for a roughly balanced and desirable state.  

In this work, we conducted a comparative analysis of the power law kinetic representations of the carbon cycle models of Schmitz \cite{SCHM2002} and Anderies et al. \cite{AND2013} at pre-industrial state. With the methods and techniques found in chemical reactions network theory, we were able to expand the analysis of the kinetic representations of the two models and identify the similarities and differences in their network and kinetic properties in relation to model construction assumptions (described in Section \ref{sec3}).

Along with previous results, the novel results established in this paper are consolidated and summarized in Table \ref{table:schmitz} and Table \ref{table:anderies} to easily compare the structural and dynamic properties of the two systems. As some differences between Schmitz and Anderies systems may result from the difference in the number of species in the underlying networks, we constructed an aggregated Schmitz model with the same species as the Anderies system. In Section \ref{sec6}, it was shown that an aggregated Schmitz model has the same structural and kinetic properties (with one exception) as a Schmitz system. Moreover, the comparison of an aggregated Schmitz system with the dynamically equivalent LDC of an Anderies system showed differences in only three structural and three kinetic properties (as summarized in Table \ref{table:comparison}). These contrasts may be viewed as the essential properties resulting from the different hypotheses underlying the Schmitz and Anderies et al. models (such as their respective non-isothermal and isothermal assumptions).

We also highlight that some of the new results observed here are derived from more general propositions about conservative, closed kinetic systems of maximal rank (see Section \ref{sec4.2.2}), of which both Schmitz and Anderies systems are examples. These propositions contribute to the mathematical theory of power-law kinetic systems, which may be applicable in the analysis of other biological systems.

Our analysis of a kinetic representation of the tCDR model of Heck et al. \cite{HECK2016}  revealed that two of its subsystems are pre-industrial Anderies systems. We have shown that these Anderies subnetworks can be used to find a dynamically equivalent system of lower deficiency. Moreover, both of them belong to the class of non-injective subset of $\textsf{AND}_<$. This finding agrees with the earlier result of Hernandez et al. \cite{HERMENDLR2022} that the system has the capacity to admit multiple steady states. 

We also note that the concept of ``planetary boundaries" introduced in the Anderies et al. model has had a substantial impact on the global sustainability community (see, for example, the paper of Steffen et al. \cite{Steffen2015}, with to date over 9600 citations!). As part of our ongoing research, we are currently working on kinetic representations of further CDR methods such as direct air capture (DAC) and ocean fertilization (OF), which also are based on Anderies building blocks. As Tan et al. \cite{TAN2022} have pointed out, it is also important to optimize combinations or ``portfolios" of NETs. We can address this problem with poly-PL kinetic systems, i.e. sums of power law systems, which we have previously used for the analysis of Hill-type systems which are prevalent in biochemical processes \cite{HEME2021}.



\section*{Declarations}
The authors declare no conflicts of interests.

\bibliographystyle{abbrv}
\bibliography{codysVrev1-arXiv}

\begin{thebibliography}{10}

\bibitem{AND2013}
J.~Anderies, S.~Carpenter, W.~Steffen, and J.~Rockström.
\newblock The topology of non-linear global carbon dynamics: from tipping
  points to planetary boundaries.
\newblock {\em Environ. Res. Lett.}, 8(4):044--048, 2013.

\bibitem{AJLM2017}
C.~P. Arceo, E.~Jose, A.~Lao, and E.~Mendoza.
\newblock Reaction networks and kinetics of biochemical systems.
\newblock {\em Math. Biosci.}, 283:13--29, 2017.

\bibitem{AJLM2022}
C.~P. Arceo, E.~Jose, A.~Lao, and E.~Mendoza.
\newblock Reaction networks analysis of biochemical systems, 2022.
\newblock Manuscript in preparation.

\bibitem{ARCEO2015}
C.~P. Arceo, E.~Jose, A.~Marin-Sanguino, and E.~Mendoza.
\newblock Chemical reaction network approaches to biochemical systems theory.
\newblock {\em Math. Biosci.}, 269:135--152, 2015.

\bibitem{BAJZER2008}
{\v{Z}}.~Bajzer, M.~Huzak, K.~L. Neff, and F.~G. Prendergast.
\newblock Mathematical analysis of models for reaction kinetics in
  intracellular environments.
\newblock {\em Math. Biosci.}, 215(1):35--47, 2008.

\bibitem{FAML2020}
H.~Farinas, E.~Mendoza, and A.~Lao.
\newblock Structural properties of an {S}-system model of mycobacterium
  tuberculosis gene regulation.
\newblock {\em Philipp J. Sci.}, 149(3):539--555, 2020.

\bibitem{FAML2021}
H.~Farinas, E.~Mendoza, and A.~Lao.
\newblock Chemical reaction network decompositions and realizations of
  {S}-systems.
\newblock {\em Philipp. Sci. Lett.}, 14:147--157, 2021.

\bibitem{FEIN1972}
M.~Feinberg.
\newblock Complex balancing in general kinetic systems.
\newblock {\em Arch. Ration. Mech. Anal.}, 49(3):187--194, 1972.

\bibitem{FEIN1979}
M.~Feinberg.
\newblock Lectures on chemical reaction networks, 1979.
\newblock From lecture notes given at the Mathematics Research Center of the
  University of Wisconsin in 1979. Available at
  https://cbe.osu.edu/chemical-reaction-network-theory.

\bibitem{FEIN1987}
M.~Feinberg.
\newblock Chemical reaction network structure and the stability of complex
  isothermal reactors {I}: The deficiency zero and deficiency one theorems.
\newblock {\em Chem. Eng. Sci.}, 42(10):2229--2268, 1987.

\bibitem{FEIN2019}
M.~Feinberg.
\newblock {\em Foundations of Chemical Reaction Network Theory}.
\newblock Springer International Publishing, Switzerland, 2019.

\bibitem{CRNTool}
M.~Feinberg, P.~Ellison, H.~Ji, and D.~Knight.
\newblock The chemical reaction network toolbox, Nov 2018.

\bibitem{FEHO1977}
M.~Feinberg and F.~J.~M. Horn.
\newblock Chemical mechanism structure and the coincidence of the
  stoichiometric and kinetic subspaces.
\newblock {\em Arch. Ration. Mech. Anal.}, 66(1):83--97, 1977.

\bibitem{FELIU2013}
E.~Feliu and C.~Wiuf.
\newblock A computational method to preclude multistationarity in networks of
  interacting species.
\newblock {\em Bioinformatics}, 29(18):2327--2334, 2013.

\bibitem{FMF2021}
L.~Fontanil, E.~Mendoza, and N.~Fortun.
\newblock A computational approach to concentration robustness in power law
  kinetic systems of {S}hinar-{F}einberg type.
\newblock {\em MATCH Commun. Math. Comput. Chem.}, 86(3):489--516, 2021.

\bibitem{MJS2018}
N.~Fortun, A.~Lao, L.~Razon, and E.~Mendoza.
\newblock Multistationarity in {E}arth's pre-industrial carbon cycle models.
\newblock {\em Manila J. Sci.}, 11:81--96, 2018.

\bibitem{FLRM2019}
N.~Fortun, A.~Lao, L.~Razon, and E.~Mendoza.
\newblock A deficiency zero theorem for a class of power-law kinetic systems
  with non-reactant-determined interactions.
\newblock {\em MATCH Commun. Math. Comput. Chem.}, 81(3):621--638, 2019.

\bibitem{FLRM2021}
N.~Fortun, A.~Lao, L.~Razon, and E.~Mendoza.
\newblock Robustness in power-law kinetic systems with reactant-determined
  interactions.
\newblock In J.~Akiyama, R.~Marcelo, M.~Ruiz, and Y.~Uno, editors, {\em
  Discrete and Computational Geometry, Graphs, and Games. JCDCGGG 2018. Lecture
  Notes in Computer Science}, volume 13034, pages 106--121, Cham, 2021.
  Springer.

\bibitem{FOME2021}
N.~Fortun and E.~Mendoza.
\newblock Absolute concentration robustness in power law kinetic systems.
\newblock {\em MATCH Commun. Math. Comput. Chem.}, 85(3):669--691, 2021.

\bibitem{DOA2018}
N.~Fortun, E.~Mendoza, L.~Razon, and A.~Lao.
\newblock A deficiency-one algorithm for power-law kinetic systems with
  reactant-determined interactions.
\newblock {\em J. Math. Chem.}, 56(10):2929--2962, 2018.

\bibitem{HECK2016}
V.~Heck, J.~Donges, and W.~Hucht.
\newblock Collateral transgression of planetary boundaries due to climate
  engineering by terrestrial carbon dioxide removal.
\newblock {\em Earth Syst. Dyn.}, 7(4):783--796, 2016.

\bibitem{HEAM2022}
B.~Hernandez, D.~Amistas, R.~Cruz, L.~Fontanil, A.~de~los Reyes~V, and
  E.~Mendoza.
\newblock Independent, incidence independent and weakly reversible
  decompositions of chemical reaction networks.
\newblock {\em MATCH Commun. Math. Comput. Chem.}, 87(2):367--396, 2022.

\bibitem{HECR2021}
B.~Hernandez and R.~J. De~la Cruz.
\newblock Independent decompositions of chemical reaction networks.
\newblock {\em Bull. Math. Biol.}, 83(7):1--23, 2021.

\bibitem{HEME2021}
B.~Hernandez and E.~Mendoza.
\newblock Positive equilibria of {H}ill-type kinetic systems.
\newblock {\em J. Math. Chem.}, 59(3):840--870, 2021.

\bibitem{HEME2022b}
B.~Hernandez and E.~Mendoza.
\newblock Weakly reversible {CF}-decompositions of chemical kinetic systems.
\newblock {\em J. Math. Chem.}, 60(5):799--829, 2022.

\bibitem{HERMENDLR2022}
B.~Hernandez, E.~Mendoza, and A.~de~los Reyes~V.
\newblock A computational approach to multistationarity of power-law kinetic
  systems.
\newblock {\em J. Math. Chem.}, 58(1):367--396, 2020.

\bibitem{HEME2022}
B.~S. Hernandez and E.~R. Mendoza.
\newblock Positive equilibria of power law kinetics on networks with
  independent linkage classes.
\newblock {\em J. Math. Chem.}, 2022.

\bibitem{HORNJACK1972}
F.~Horn and R.~Jackson.
\newblock General mass action kinetics.
\newblock {\em Arch. Ration. Mech. Anal.}, 47(2):81--116, 1972.

\bibitem{HORN1972}
F.~J.~M. Horn.
\newblock Necessary and sufficient conditions for complex balancing in chemical
  kinetics.
\newblock {\em Arch. Ration. Mech. Anal.}, 49(3):172--186, 1972.

\bibitem{JOSI2011}
M.~Johnston and D.~Siegel.
\newblock Linear conjugacy of chemical reaction networks.
\newblock {\em J. Math. Chem.}, 49(17):1263--1282, 2011.

\bibitem{JOMT2022}
E.~Jose, D.~A. Talabis, and E.~Mendoza.
\newblock Absolutely complex balanced kinetic systems.
\newblock {\em MATCH Commun. Math. Comput. Chem.}, 88(2):397--436, 2022.

\bibitem{JOSH2012}
B.~Joshi and A.~Shiu.
\newblock Atoms of multistationarity in chemical reaction networks.
\newblock {\em J. Math. Chem.}, 51(1):153--178, 2013.

\bibitem{LLMM2022}
A.~Lao, P.~V. Lubenia, D.~Magpantay, and E.~Mendoza.
\newblock Concentration robustness in {LP} kinetic systems.
\newblock {\em MATCH Commun. Math. Comput. Chem.}, 88(1):29--66, 2022.

\bibitem{LUML2022}
P.~V. Lubenia, E.~Mendoza, and A.~Lao.
\newblock Reaction network analysis of metabolic insulin signaling.
\newblock {\em Bull. Math. Biol.}, 84(11), 2022.

\bibitem{MESH2021}
N.~Meshkat, A.~Shiu, and A.~Torres.
\newblock Absolute concentration robustness in networks with low-dimensional
  stoichiometric subspace.
\newblock {\em Vietnam J. Math.}, 50(3):623--651, 2021.

\bibitem{MURE2012}
S.~M{\"u}ller and G.~Regensburger.
\newblock Generalized mass action systems: Complex balancing equilibriaand sign
  vectors of the stoichiometric and kinetic-order subspaces.
\newblock {\em SIAM J. Appl. Math.}, 72(6):1926--1947, 2012.

\bibitem{MURE2014}
S.~M{\"u}ller and G.~Regensburger.
\newblock Generalized mass-action systems and positive solutions of polynomial
  equations with real and symbolic exponents (invited talk).
\newblock In V.~Gerdt, W.~Koepf, W.~Seiler, and E.~Vorozhtsov, editors, {\em
  Computer Algebra in Scientific Computing}, pages 302--323, Cham, 2014.
  Springer.

\bibitem{NEML2019}
A.~Nazareno, R.~P. Eclarin, E.~Mendoza, and A.~Lao.
\newblock Linear conjugacy of chemical kinetic systems.
\newblock {\em Math. Biosci. Eng.}, 16(6):8322--8355, 2019.

\bibitem{SAVA1969}
M.~Savageau.
\newblock Biochemical systems analysis: I. {S}ome mathematical properties of
  the rate law for the component enzymatic reactions.
\newblock {\em Am. J. Sci.}, 25(3):365--369, 1969.

\bibitem{SAVA1998}
M.~Savageau.
\newblock Development of fractal kinetic theory for enzyme-catalysed reactions
  and implications for the design of biochemical pathways.
\newblock {\em BioSystems}, 47(1):9--36, 1998.

\bibitem{SCHM2002}
R.~Schmitz.
\newblock The {E}arth's carbon cycle: {C}hemical engineering course material.
\newblock {\em Chem. Eng. Educ.}, 36(4):296--309, 2002.

\bibitem{Schnell2004}
S.~Schnell and T.~Turner.
\newblock Reaction kinetics in intracellular environments with macromolecular
  crowding: simulations and rate laws.
\newblock {\em Prog. Biophys. Mol. Biol.}, 85(2-3):235--260, 2004.

\bibitem{SHFE2010}
G.~Shinar and M.~Feinberg.
\newblock Structural sources of robustness in biochemical reaction networks.
\newblock {\em Science}, 327(5971):1389--1391, 2010.

\bibitem{SHFE2012}
G.~Shinar and M.~Feinberg.
\newblock Concordant chemical reaction networks.
\newblock {\em Math. Biosci.}, 240(2):92--113, 2012.

\bibitem{Steffen2015}
W.~Steffen, K.~Richardson, J.~Rockström, S.~E. Cornell, I.~Fetzer, E.~M.
  Bennett, R.~Biggs, S.~R. Carpenter, W.~de~Vries, C.~A. de~Wit, C.~Folke,
  D.~Gerten, J.~Heinke, G.~M. Mace, L.~M. Persson, V.~Ramanathan, B.~Reyers,
  and S.~Sörlin.
\newblock Planetary boundaries: Guiding human development on a changing planet.
\newblock {\em Science}, 347(6223), 2015.

\bibitem{TALABIS2017}
D.~A. Talabis, C.~P. Arceo, and E.~Mendoza.
\newblock Positive equilibria of a class of power-law kinetics.
\newblock {\em J. Math. Chem.}, 56(2):358--394, 2017.

\bibitem{TAMJ2019}
D.~A. Talabis, E.~Mendoza, and E.~Jose.
\newblock Complex balanced equilibria of weakly reversible power law kinetic
  systems.
\newblock {\em MATCH Commun. Math. Comput. Chem.}, 82(3):601--624, 2019.

\bibitem{TAN2022}
R.~R. Tan, K.~B. Aviso, D.~C.~Y. Foo, M.~V. Migo-Sumagang, P.~N. S.~B. Nair,
  and M.~Short.
\newblock Computing optimal carbon dioxide removal portfolios.
\newblock {\em Nat. Comput. Sci}, 2(8):465--466, jul 2022.

\bibitem{TOTH2018}
J.~T{\'{o}}th, A.~L. Nagy, and D.~Papp.
\newblock {\em Reaction Kinetics: Exercises, Programs and Theorems}.
\newblock Springer, New York, 2018.

\bibitem{VOIT2000}
E.~Voit.
\newblock {\em Computational analysis of biochemical systems: A practical guide
  for biochemists and molecular biologists}.
\newblock Cambridge University Press, United Kingdom, 2000.

\bibitem{VOIT2013}
E.~Voit.
\newblock Biochemical systems theory: A review.
\newblock {\em {ISRN} Biomath.}, 2013:1--53, 2013.

\bibitem{VOIT2006}
E.~Voit and J.~Schwacke.
\newblock Understanding through modeling a historical perspective and review of
  biochemical systems theory as a powerful tool for systems biology.
\newblock In A.~Konopka, editor, {\em Systems Biology: Principles, Methods, and
  Concepts}, pages 27--82. CRC Press, Boca Raton, Florida, 2006.

\bibitem{WIUF2013}
C.~Wiuf and E.~Feliu.
\newblock Power-law kinetics and determinant criteria for the preclusion of
  multistationarity in networks of interacting species.
\newblock {\em SIAM J. Appl. Dyn. Syst.}, 12(4):1685--1721, 2013.

\end{thebibliography}

\newpage

\appendix

\section{Nomenclature} \label{append:nomenclature}

\subsection{List of important symbols}
\begin{table}[h]
\begin{tabular}{ll}
$(\mathscr{N},K)$ & Chemical kinetic system \\
$\delta$    &   Deficiency  \\
$F$ &   Kinetic order matrix    \\
$m$ & Number of species \\
$n$ & Number of complexes \\
$n_r$ & Number of reactant complexes \\
$r$ & Number of reactions \\
$\ell$ & Number of linkage classes \\
$s\ell$ & Number of strong linkage classes \\
$t$ & Number of terminal strong linkage classes \\
$s$ &   Rank of a CRN or $\dim (\mathcal{S})$   \\
$Z_+$ & Set of complex balanced equilibria \\
$E_+$   &   Set of positive equilibria  \\
$f$ &   Species formation rate function \\
$\mathcal{S}$   &   Stoichiometric subspace \\
\end{tabular}
\end{table}

\subsection{Abbreviations}

\begin{table}[ht!]
\begin{tabular}{ll}
ACB & Absolutely complex balanced \\
ACR & Absolute concentration robustness \\
CLP & Complex balanced equilibria log parametrized \\
CF & Complex factorizable \\
CRN &   Chemical Reactions Network  \\
CRNT & Chemical Reactions Network Theory    \\
FSK & Factor span surjective kinetics \\
ISK &  Interaction span surjective kinetics \\
KSSC & Kinetic/Stoichiometric Subspace Coincidence \\
LDC & Low-deficiency complement \\
LP & Log parametrized \\
NF & Non-complex factorizable \\
ODE &   Ordinary differential equation  \\
PLK & Power law kinetics  \\
PLP & positive equilibria log parametrized \\
PL-NDK  &   Power law with non-reactant-determined kinetics \\
PL-NIK & Power law with non-inhibitory kinetics \\
PL-RDK  &   Power law reactant-determined kinetics  \\
\end{tabular}
\end{table}

\end{document}